\documentclass[a4paper,12pt]{article}

\usepackage[utf8]{inputenc}

  \usepackage[T1]{fontenc}
  \usepackage[utf8]{inputenc}
  \usepackage{lmodern}
  \usepackage[top=2cm,bottom=3cm, left=2.5cm, right=2.5cm]{geometry}
  \usepackage{amsmath, amssymb}
  \usepackage{hyperref}
  \usepackage{color}
  \usepackage{graphicx}
  \usepackage{tikz}
  \usepackage{tikzrput}
  \usepackage{verbatim}
  \usepackage{mathrsfs}
  \usepackage{cancel}
  \usepackage[affil-it]{authblk}
 
  \usepackage{amsthm}
  \usepackage{enumerate}

  \theoremstyle{plain}
  \newtheorem{thm}{Theorem}[section]
  \newtheorem*{theorem-non}{Theorem}
  \newcounter{thI}
  \setcounter{thI}{-1}
  \newtheorem{thmInt}[thI]{Theorem}

  \newtheorem{lem}{Lemma}[section]
  \newtheorem{prop}{Proposition}[section]
  \newtheorem{cor}{Corollary}[section]

  \newcounter{cntasmp}

  \newtheorem{assumption}[cntasmp]{Assumption}

  \theoremstyle{definition}

  \theoremstyle{remark}
  \newtheorem{rmq}{Remark}[section]
  
\def\ptn(#1)(#2){\fill(#1)circle(2pt)}

\begin{document}

  \renewcommand{\proofname}{Proof}
  \renewcommand\thethI{\arabic{thI}} 
  
  \author{Laurent Dietrich%
  \thanks{Electronic address: \texttt{laurent.dietrich@math.univ-toulouse.fr}}}
\affil{Institut de Mathématiques de Toulouse ; UMR5219 \\ Université de Toulouse ; CNRS \\ UPS IMT, F-31062 Toulouse Cedex 9, France}
  
  \title{Velocity enhancement of reaction-diffusion fronts by a line of fast diffusion}
  \maketitle
  
  \begin{abstract}
  We study the velocity of travelling waves of a reaction-diffusion system coupling a standard reaction-diffusion equation in a strip with a one-dimensional diffusion equation on a line. We show that it grows like the square root of the diffusivity on the line. This generalises a result of Berestycki, Roquejoffre and Rossi in the context of Fisher-KPP propagation where the question could be reduced to algebraic computations. Thus, our work shows that this phenomenon is a robust one. The ratio between the asymptotic velocity and the square root of the diffusivity on the line is characterised as the unique admissible velocity for fronts of an hypoelliptic system, which is shown to admit a travelling wave profile.    
  \end{abstract}

\newpage

\section{Introduction}

This paper deals with the limit $D \to +\infty$ of the following system with unknowns $c~>~0, u(x), v(x,y)$~:
\begin{alignat*}{1}
&\begin{cases}
-d\Delta v+ c\partial_x v = f(v)\text{ for }(x,y)\in \Omega_L := \mathbb R\times]-L,0[ \\
d\partial_y v(x,0) = \mu u(x) - v(x,0)\text{ for }x\in \mathbb R \\
-d\partial_y v(x,-L) = 0 \text{ for }x\in \mathbb R\\ 
-Du''(x) + cu'(x) = v(x,0) - \mu u(x)\text{ for }x\in \mathbb R
\end{cases}
\end{alignat*}
along with the uniform in $y$ limiting conditions
$$\mu u, v \to 0 \text{ as }x\to -\infty$$
$$\mu u, v \to 1 \text{ as } x\to +\infty$$
These equations will be represented from now on as the following diagram 
\begin{equation}
\label{normal}
  \begin{tikzpicture}
  \draw (-6,0) -- (6,0) node[pos=0.5,below] {\small{$d\partial_y v = \mu u - v$}} node[pos=0.5,above] {$-Du'' + cu' = v - \mu u$};

  \node at (4.3,0.33) {$u\to 1/\mu$};
  \node at (-4.3,0.33) {$0 \leftarrow u$};

  \node at (0,-1.5) {$- d\Delta v + c\partial_x v = f(v)$};

  \draw (-6,-3) -- (6,-3) node[pos=0.5,above] {\small{$-\partial_y v= 0$}};


  \node at (4.3,-1.5) {$v \to 1$};
  \node at (-4.3,-1.5) {$0 \leftarrow v$};

  \end{tikzpicture}
\end{equation}

In \cite{BRR}, Berestycki, Roquejoffre and Rossi introduced the following reaction-diffusion system :
\begin{equation}
\label{readi}
  \begin{tikzpicture}
  \draw (-6,0) -- (6,0) node[pos=0.5,below] {\small{$d\partial_y v = \mu u - v$}} node[pos=0.5,above] {$\partial_t u - D\partial_{xx}u  = v - \mu u$};

  \node at (0,-1.5) {$\partial_t v - d\Delta v  = f(v)$};

  \draw (-6,-3) -- (6,-3) node[pos=0.5,above] {\small{$-\partial_y v= 0$}};


  \end{tikzpicture}
\end{equation}
but in the half plane $y < 0$ with $f(v)$ of the KPP-type, i.e $f > 0$ on $(0,1)$, $f(0) = f(1) = 0$, $f'(1) < 0$ and $f(v) \leq f'(0)v$. Such a system was proposed to give a mathematical description of the influence of transportation networks on biological invasions. If $(c,u,v)$ is a solution of \eqref{normal}, then $(u(x+ct),v(x+ct))$ is a travelling wave solution of \eqref{readi}, connecting the states $(0,0)$ and $(1/\mu,1)$. In \cite{BRR}, the following was shown :
\begin{theorem-non}(\cite{BRR})
\begin{enumerate}[i)]
\item Spreading. There is an asymptotic speed of spreading $c_* = c_*(\mu,d,D) > 0$ such that the following is true. Let the initial datum $(u_0,v_0)$ be compactly supported, non-negative and $\not\equiv (0,0)$. Then :
\begin{itemize}
\item for all $c> c_*$ $$\lim_{t\to+\infty} \sup_{|x|\geq ct} (u(x,t), v(x,y,t)) = (0,0)$$ uniformly in $y$.
\item for all $c < c_*$ $$\lim_{t\to+\infty} \inf_{|x|\leq ct} (u(x,t), v(x,y,t)) = (1/\mu,1)$$ locally uniformly in $y$.
\end{itemize}
\item  The spreading velocity. If $d$ and $\mu$ are fixed the following holds true.
\begin{itemize}
\item If $D \leq 2d$, then $c_*(\mu,d,D) = c_{KPP} =: 2\sqrt{df'(0)}$
\item If $D > 2d$ then $c_*(\mu,d,D) > c_{KPP}$ and $\lim_{D\to+\infty} c_*(\mu,d,D)/\sqrt{D}$ exists and is a positive real number.
\end{itemize}
\end{enumerate}
\end{theorem-non}
\ \\
\indent
Thus a relevant question is whether the result of \cite{BRR} is due to the particular structure of the nonlinearity or if it has a more universal character. This is a non trivial question since the KPP case benefits from the very specific property $f(v) \leq f'(0)v$ : in such a case propagation is dictated by the linearised equation near $0$, and the above question can be reduced to algebraic computations. Observe also that some enhancement phenomena really need this property : for instance, for the fractional reaction-diffusion equation
$$\partial_t u + (-\Delta)^s u = u(1-u)$$
in \cite{CR13, CCR}, Cabré, Coulon and Roquejoffre proved that the propagation of an initially compactly supported datum is exponential in time. Nonetheless, this property becomes false and propagation stays linear in time with the reaction term studied here, as proved by Mellet, Roquejoffre and Sire in \cite{MRS}. In this paper, we will show that the phenomenon highlighted in \cite{BRR} persists under a biologically relevant class of nonlinearities that arise in the modelling of Allee effect. Namely $f$ will be of the ignition type :
\begin{assumption}
\label{asf}
 $f:[0,1] \to \mathbb R $ is a smooth non-negative function, $f = 0$ on $[0,\theta]\cup\{1\}$ with $\theta > 0$, $f(0) = f(1) = 0$, and $f'(1) < 0$. For convenience we will still call $f$ an extension of $f$ on $\mathbb R$ by zero at the left of $0$ and by its tangent at $1$ (so it is negative) at the right of $1$. 
\end{assumption}
\begin{figure}[h!]
 \centering \def\svgwidth{200pt} 
 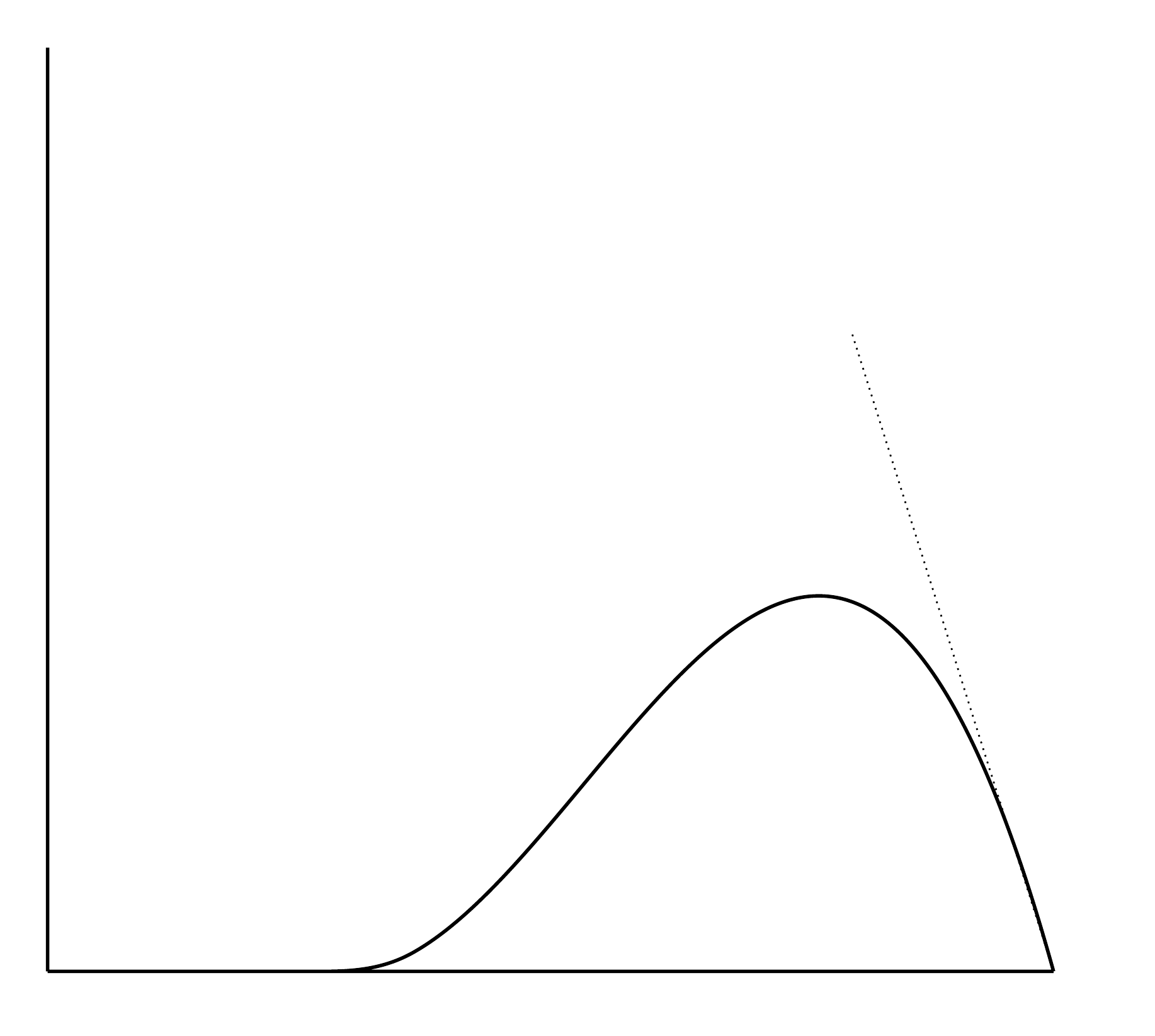 \caption{Example $f = \mathbf 1_{u>\theta}(u-\theta)^2(1-u)$} 
\end{figure}

With our choice of $f$, dynamics in the system \eqref{readi} is governed by the travelling waves, which explains our point of view to answer the question through the study of equation \eqref{normal}. Replacing the half-plane of \cite{BRR} by a strip is a technical simplification, legitimate since we are only interested in the propagation in the direction $x$. Observe that in the light of \cite{BRR3} and the numerical simulations in \cite{Mathese}, translating our results in the half-plane setting seems to be a deep and non-trivial question that goes outside the scope of this paper and will be studied elsewhere.

Our starting point is the following result :
\begin{thmInt}(\cite{LD14})

Let $f$ satisfy Assumption \eqref{asf}. Then there exists $c(D) >0$ and $u, v$ smooth solutions of \eqref{normal}. Moreover, $c(D)$ is unique, $u$ and $v$ are unique up to translations in the $x$ direction, and $u', \partial_x v > 0$.
\end{thmInt}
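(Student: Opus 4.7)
The overall strategy I would follow is a continuation/approximation scheme on bounded cylinders, combined with the sliding method for uniqueness and monotonicity. This is the standard route for ignition-type travelling waves pioneered by Berestycki--Nirenberg, adapted to the coupled line/strip system here.

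First I would work on the truncated cylinder $\Omega_{a,L}=(-a,a)\times(-L,0)$, together with the interval $(-a,a)$ for the line equation, imposing Dirichlet boundary data $(u,v)=(0,0)$ at $x=-a$ and $(u,v)=(1/\mu,1)$ at $x=a$ (with the original Robin-type exchange condition at $y=0$ and Neumann at $y=-L$ kept unchanged). For a fixed speed $c$, the elliptic coupled system on this bounded domain can be solved by an ordered sub/super-solution construction: the constant pair $(0,0)$ is a subsolution and $(1/\mu,1)$ is a supersolution; a solution in between follows from a monotone iteration, using the fact that for each $c$ the scalar subproblems (Poisson-type for $v$ with Robin condition, and the ODE for $u$) admit unique solutions by standard theory. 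To pin down $c$, I would fix a normalisation, e.g.\ $\max_{x\in(-a,a)} v(x,-L/2)=\theta+\tfrac{1-\theta}{2}$, and invoke a Leray--Schauder degree/continuity argument in $c$ to produce at least one speed $c_a>0$ realising it. Monotonicity $\partial_x u_a,\partial_x v_a>0$ on the truncated problem follows by sliding the profile against itself in $x$ and using the strong maximum principle together with the Hopf lemma on the line $y=0$ (the latter being essential to handle the exchange boundary condition).

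The main work is then to obtain uniform bounds on $c_a$ and to pass to the limit $a\to+\infty$. An upper bound $c_a\le C$ follows by comparison with an exponential supersolution built from an ansatz $(u,v)=(\alpha e^{\lambda x},\beta(y) e^{\lambda x})$ near $+\infty$, where $\lambda>0$ is chosen so that the linearisation at $(1/\mu,1)$ admits exponential modes; equivalently, one integrates the equation against $e^{-\lambda x}$ over the strip. The lower bound $c_a\ge \delta>0$ is the delicate point: I would construct a compactly supported subsolution on a large but finite portion of the strip that pushes $v$ above the ignition threshold $\theta$, and then use a sliding argument to rule out $c_a\to 0$, since otherwise the stationary limit would contradict the normalisation together with the absence of non-trivial stationary fronts connecting $0$ to $1$ in this geometry. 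Once $c_a\to c>0$, standard interior and up-to-boundary elliptic estimates (Schauder estimates for $v$ in the strip, and for $u$ on the line) yield compactness, and a diagonal extraction produces a limit $(c,u,v)$ on the unbounded strip. Monotonicity and the normalisation pass to the limit; the limits at $x=\pm\infty$ are identified by analysing the $\omega$-limit set in $x$, which must consist of a pair of stationary $x$-independent solutions, hence only $(0,0)$ and $(1/\mu,1)$ by the structure of $f$.

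Finally, uniqueness of the speed $c$ and of the profile up to translation is obtained by the sliding method adapted to this two-component system. Given two solutions $(c_1,u_1,v_1)$ and $(c_2,u_2,v_2)$ with $c_1\ge c_2$, one slides $(u_1,v_1)$ to the right by a large parameter $\tau$ so that it lies strictly above $(u_2,v_2)$, and then decreases $\tau$ until first contact. At the contact point, the strong maximum principle applied to the linear system satisfied by the differences, together with the Hopf lemma at the exchange boundary $y=0$ (to prevent contact occurring on the line without contact in the interior), forces the two profiles to coincide, hence $c_1=c_2$. Strict monotonicity $u',\partial_x v>0$ then follows by applying the same sliding to a single solution against its own shifts. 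I expect the two hardest points to be (i) the uniform lower bound $c_a\ge\delta>0$, which requires a subsolution tailored to the coupling between the line and the bulk, and (ii) the correct use of the Hopf lemma at $y=0$ in the sliding argument, since the boundary condition $d\partial_y v=\mu u-v$ mixes the two unknowns and forbids a naive application of the maximum principle on either equation alone.
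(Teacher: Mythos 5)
This theorem is not proved in the present paper: it is quoted from \cite{LD14} as the starting point, so your proposal can only be measured against the methods that paper (and Sections \ref{upboundDinf}--\ref{direct} here) actually use. Your overall route --- truncation to finite cylinders with Dirichlet data, monotone iteration for fixed $c$, a normalisation plus a continuity/degree argument to select $c_a$, uniform bounds on $c_a$, compactness, and sliding for monotonicity and uniqueness --- is indeed the standard Berestycki--Nirenberg scheme that this circle of results relies on, and most of it is sound in outline.

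There is, however, one concrete gap: the identification of the limit state as $x\to-\infty$. You argue that the $\omega$-limit set consists of $x$-independent stationary solutions, ``hence only $(0,0)$ and $(1/\mu,1)$ by the structure of $f$''. This is false for ignition nonlinearities: since $f\equiv 0$ on $[0,\theta]$, \emph{every} constant pair $(\beta/\mu,\beta)$ with $\beta\in[0,\theta]$ is an $x$-independent solution of \eqref{normal} compatible with all the boundary conditions, so monotonicity and the normalisation only give $\mu u,v\to\beta\le\theta$ at $-\infty$, not $\beta=0$. Ruling out $\beta>0$ requires a quantitative decay estimate, namely comparison with the positive exponential solutions of the linearised system --- this is precisely why the paper computes them in Section \ref{upboundDinf} and proves Prop.~\ref{tail}, and why, for the degenerate limit problem, a separate argument beyond Prop.~\ref{rightlimits} is needed to show $v_-=0$ (using that the solution comes from the finite boxes and an exponential supersolution valid where $v\le\alpha$, $\alpha$ close to $\theta$). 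In the truncated construction one typically enforces a normalisation of the type $\max_{x\le 0}\max(\mu u,v)=\theta$, so that the half-strip $\{x\le 0\}$ lies in the linear range and the exponential comparison holds uniformly in the truncation parameter; your normalisation $\max_x v(x,-L/2)=\theta+\frac{1-\theta}{2}$ controls neither component on a half-line and does not by itself make this comparison available. A second, smaller omission of the same nature: to initiate the sliding, both in the upper bound for $c$ and in the uniqueness argument, you must dominate the other profile on an unbounded region near $-\infty$, which again needs a comparison of decay rates (the exponent $\lambda(c)>c$ and its monotonicity in $c$), not just compactness on compact sets --- compare Prop.~\ref{cmax} and Lemma~\ref{ordre}. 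With these tail estimates added, your scheme closes; without them, both the identification of the limits and the start of the sliding are unjustified.
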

The first result we will prove is the following :
\begin{thmInt}
\label{cD}
 There exists $c_\infty > 0$ such that $$c(D) \sim_{D\to +\infty} c_\infty \sqrt{D}$$
\end{thmInt}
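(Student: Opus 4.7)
The plan is to perform the scaling that renders the expected velocity $O(1)$, establish uniform-in-$D$ bounds, extract a limit profile solving a hypoelliptic travelling wave problem, and conclude via a uniqueness result for admissible velocities of that limit problem.

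\textbf{Rescaling.} Write $c(D) = \gamma(D)\sqrt{D}$ and set $X = x/\sqrt{D}$, $U_D(X) = u(\sqrt{D}X)$, $V_D(X,y) = v(\sqrt{D}X,y)$. The rescaled system reads
\begin{align*}
-\tfrac{d}{D}\partial_X^2 V_D - d\,\partial_y^2 V_D + \gamma(D)\,\partial_X V_D &= f(V_D), \\
-U_D''(X) + \gamma(D)\,U_D'(X) &= V_D(X,0) - \mu U_D(X),
\end{align*}
with the same Robin/Neumann conditions at $y=0,-L$ and the same limits at $X = \pm\infty$. Formally, as $D\to+\infty$ the $\partial_X^2$ term on $V_D$ disappears, leaving a hypoelliptic limit problem for a triple $(\gamma,U,V)$, with $V$ diffusing only in $y$ while the line $y=0$ keeps its full second-order diffusion. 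The value $c_\infty$ should be the unique admissible velocity $\gamma$ of that limit problem.

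\textbf{Uniform bounds on $\gamma(D)$.} I would first establish $0 < \underline{c} \le \gamma(D) \le \overline{c}$ for $D$ large. For the upper bound, dominate $f$ by a KPP-type reaction and build an explicit exponential supersolution of the form $(\alpha e^{\lambda X},\beta(y)e^{\lambda X})$ for the linearised rescaled system; the resulting dispersion relation is a linear eigenvalue problem in $y$ with parameters $\gamma,\lambda$, uniformly controlled in $D$, and forces $\gamma(D)\le\overline{c}$. For the lower bound, since $f$ is of ignition type, propagation should occur at a strictly positive speed: assuming along a subsequence that $\gamma(D_n)\to 0$, one passes to a nontrivial limit profile satisfying a degenerate stationary problem connecting $(0,0)$ to $(1/\mu,1)$, which is ruled out by integrating against $f(V)$ and using the strong maximum principle on the line.

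\textbf{Compactness and limit profile.} After normalising the translation by, say, $\mu U_D(0) = \theta/2$, the monotonicity $U_D',\partial_X V_D > 0$ from the invoked existence theorem, together with $0\le U_D\le 1/\mu$ and $0\le V_D\le 1$, yields uniform BV-in-$X$ estimates. Uniform ellipticity in $y$ inside the strip, combined with a bounded right-hand side, provides $y$-regularity on $V_D$; $U_D$ solves a uniformly elliptic 1D ODE with bounded source. Extracting a subsequence $D_n\to+\infty$ with $\gamma(D_n)\to\gamma^\ast$, $U_{D_n}\to U^\ast$ and $V_{D_n}\to V^\ast$ locally uniformly, the limit $(\gamma^\ast, U^\ast, V^\ast)$ is a travelling wave for the hypoelliptic limit system; monotonicity and normalisation propagate the correct limits at $X=\pm\infty$.

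\textbf{Conclusion.} The main obstacle is to show that $\gamma^\ast$ is independent of the subsequence, i.e.~to prove uniqueness of the admissible velocity for the hypoelliptic limit problem. The loss of $X$-diffusion on $V$ rules out applying sliding methods directly to $V$; one must exploit that $U$ still satisfies a uniformly elliptic ODE on the line and couple a sliding argument for $U$ with rigidity along the characteristics for $V$. Granting this uniqueness — which is the content of the companion result announced in the abstract — every subsequential limit produces the same $\gamma^\ast =: c_\infty > 0$, so $\gamma(D)\to c_\infty$ and hence $c(D)\sim c_\infty\sqrt{D}$ as $D\to+\infty$.
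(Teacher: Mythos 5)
Your overall architecture (rescale so the expected speed is $O(1)$, prove uniform bounds on $\gamma(D)$, extract a limit profile solving the hypoelliptic system \eqref{parab}, conclude by uniqueness of the admissible velocity) is exactly the paper's strategy, but the proposal has a genuine gap at its decisive step: you \emph{assume} the uniqueness of the admissible velocity for the limit problem, ``granting'' it as a companion result. That uniqueness is not an external input here -- it is the heart of the proof of this theorem (and of Theorem \ref{degen}), and the paper proves it. The obstacle you flag, namely that the loss of $X$-diffusion on $V$ forbids a direct sliding argument, is precisely what the paper overcomes: since every limit point satisfies $\gamma^\ast>0$, the $v$-equation in \eqref{parab} is uniformly \emph{parabolic} once $x$ is viewed as a time variable, so the strong parabolic maximum principle and parabolic Hopf lemma apply on the strip, coupled through the Robin condition with the uniformly elliptic ODE for $u$ on the line. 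A sliding argument then runs exactly as in the elliptic case, provided one first orders the tails at $x\to-\infty$; this ordering is obtained at the level of the approximating sequences via the explicit exponential solutions and the uniform tail bounds of Proposition \ref{tail} (Lemma \ref{ordre}). Also, before sliding, you need the subsequential limits to actually satisfy the boundary conditions $0$ and $1$ at $X=\mp\infty$ uniformly in $y$; ``monotonicity and normalisation propagate the correct limits'' is too quick, because locally uniform convergence says nothing at infinity and constants in $[0,\theta]$ are equilibria -- the paper uses the uniform exponential bounds of Proposition \ref{tail} on the left and energy/integration-by-parts estimates (as in Lemma \ref{energy} and Proposition \ref{rightlimits}) on the right.

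A secondary but real problem is your lower bound. If $\gamma(D_n)\to 0$, the drift vanishes together with the $X$-diffusion, so the compactness you invoke elsewhere (which, in the paper's Lemma \ref{H3estimate}, explicitly divides by $c_{min}>0$) is unavailable, and in any case a locally uniform limit would \emph{not} be known to connect $(0,0)$ to $(1/\mu,1)$: the limits at $\pm\infty$ are not preserved, so your proposed contradiction ``by integrating against $f(V)$'' has nothing to bite on. The paper instead proves $\inf_D c(D)>0$ in Section \ref{lowerbound} by purely integral means: the identity $c_n\,(L+1/\mu)=\int_{\Omega_L} f(\psi_n)$, the energy identity \eqref{ipph1}, the normalisation $\psi_n(0,0)=\theta_1\in(\theta,1)$, and a convolution-kernel plus Markov-inequality argument showing that $\psi_n$ remains in $[(1-\delta)\theta_1,(1+\delta)\theta_1]$ on a set of measure at least $L$, which contradicts $\int_{\Omega_L} f(\psi_n)\to 0$. (A limit argument in the spirit you suggest can be made to work -- the paper does something similar for the truncated problem in Section \ref{direct}, rescaling $x$ by $c_n$, using BV/$L^1$ compactness from monotonicity and concluding by a concavity/Liouville argument under a normalisation -- but that requires exactly the normalisation and compactness bookkeeping your sketch omits, and a different contradiction than the one you state.) Your upper bound sketch is closest to the paper's (exponential supersolution $e^{cX}$ with $c^2(1-d/D)\geq \mathrm{Lip}\, f$), though note that with an ignition nonlinearity the conclusion still requires the sliding/comparison step of Proposition \ref{cmax}, not just the dispersion relation.
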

\begin{rmq}
\label{rmqhomo}
 We would like to point out that in the homogeneous equation in $\mathbb R^n$  : 
\begin{equation}
\label{rn}
-d\Delta v + c\partial_x v = f(v)
\end{equation} it is trivial by uniqueness (see the works of Kanel \cite{KA}) that $c(d) = c_0 \sqrt{d}$ where $c_0$ is the velocity solution of \eqref{rn} with $d=1$. Indeed, to see this, just rescale \eqref{rn} by $\tilde u(x) = u(x\sqrt{d})$ and $\tilde c = c/\sqrt{d}$. Thus, in Theorem \ref{cD} we retrieve the same asymptotic order for $c(D)$ as in the homogeneous case. The comparison between $c_0$ and $c_\infty$ is an interesting question and we wish to answer it in another paper. 
\end{rmq}

A by-product of the proof of Theorem \ref{cD} is the well-posedness for an a priori degenerate elliptic system, where the species of density $v$ would only diffuse vertically, which can be seen as an hypoellipticity result :
\begin{thmInt}
 \label{degen}
$c_\infty$ can be characterised as follows : there exists a unique $c_\infty > 0$ and $u \in \mathcal C^{2+\alpha}(\mathbb R),v\in \mathcal C^{1+\alpha/2,2+\alpha}(\Omega_L)$ with $u', \partial_x v > 0$, unique up to translations in $x$ that solve
\begin{equation}
\label{parab}
  \begin{tikzpicture}
  \draw (-6,0) -- (6,0) node[pos=0.5,below] {\small{$d\partial_y v = \mu u - v$}} node[pos=0.5,above] {$-u'' + c_\infty u' = v - \mu u$};

  \node at (4.3,0.33) {$u\to 1/\mu$};
  \node at (-4.3,0.33) {$0 \leftarrow u$};

  \node at (0,-1.5) {$ c_\infty\partial_x v - d\partial_{yy} v  = f(v)$};

  \draw (-6,-3) -- (6,-3) node[pos=0.5,above] {\small{-$\partial_y v= 0$}};


  \node at (4.3,-1.5) {$v \to 1$};
  \node at (-4.3,-1.5) {$0 \leftarrow v$};

  \end{tikzpicture}
\end{equation}
\end{thmInt}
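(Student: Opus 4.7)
Existence is a byproduct of Theorem \ref{cD}: one rescales the system \eqref{normal} so that its only $D$-dependent term drops out in the limit, and extracts a converging subsequence. Uniqueness of $c_\infty$ and of $(u,v)$ up to $x$-translation requires an independent argument of sliding type, performed directly on the hypoelliptic system \eqref{parab}.

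\textbf{Existence via passage to the limit.} Starting from a family $(c(D), u_D, v_D)$ of solutions of \eqref{normal} given by the first theorem of the excerpt, I would introduce the rescaled triple
$$\tilde c_D := \frac{c(D)}{\sqrt D}, \qquad \tilde u_D(x) := u_D(x\sqrt D), \qquad \tilde v_D(x,y) := v_D(x\sqrt D, y).$$
A direct substitution shows that $(\tilde c_D, \tilde u_D, \tilde v_D)$ satisfies exactly the system \eqref{parab}, except that the bulk equation contains the extra term $-(d/D)\partial_{xx}\tilde v$ which vanishes as $D\to\infty$; boundary conditions and limits at $\pm\infty$ are preserved. Theorem \ref{cD} gives $\tilde c_D \to c_\infty$. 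After fixing the translation invariance by e.g. $\mu \tilde u_D(0) = (1+\theta)/2$, the uniform bounds $0\le \tilde u_D \le 1/\mu$ and $0\le \tilde v_D \le 1$ together with standard Schauder estimates yield a subsequence converging in $\mathcal C^{2+\alpha}_{\mathrm{loc}}$ to a triple $(c_\infty, u, v)$ solving \eqref{parab}. The monotonicities $\tilde u_D' \ge 0$, $\partial_x \tilde v_D \ge 0$ pass to the limit, and the strong maximum principle applied to the linear equations satisfied by $u'$ and $\partial_x v$ promotes them to strict inequalities. Being bounded and monotone in $x$, $u$ and $v$ admit limits at $\pm\infty$; these limits solve the stationary problem which, in view of the Robin coupling on $y=0$, forces $\mu u \equiv v \equiv \mathrm{const}$, leaving only the two states $(0,0)$ and $(1/\mu,1)$. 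The normalization at $x=0$ fixes which is attained at $-\infty$ and which at $+\infty$.

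\textbf{Uniqueness via sliding.} Given two solutions $(c_i, u_i, v_i)$, $i=1,2$, with $c_1 \ge c_2$, I would run the Berestycki--Nirenberg sliding method on the pair $(u_1^\tau, v_1^\tau) := (u_1(\cdot + \tau), v_1(\cdot + \tau, \cdot))$. Using the uniform convergence to the constant states at $\pm\infty$ (together with the strict monotonicity), one checks that $(u_1^\tau, v_1^\tau) > (u_2, v_2)$ for $\tau$ large. Decreasing $\tau$ to the critical value $\tau^*$ at which contact first occurs, one applies the strong maximum principle to the elliptic $u$-equation, the forward parabolic (in $x$, since $c_\infty > 0$) $v$-equation, and the Hopf lemma at $y=0$ to conclude that $(u_1^{\tau^*}, v_1^{\tau^*}) \equiv (u_2, v_2)$; comparing equations then gives $c_1 = c_2$.

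\textbf{Main obstacle.} The technical heart of the argument is the sliding step for the degenerate system: since the $v$-equation has lost its $\partial_{xx}$, the elliptic two-sided comparison is replaced by a parabolic-in-$x$ comparison that propagates information only forward. Obtaining the initial ordering $(u_1^\tau, v_1^\tau) \ge (u_2, v_2)$ for large $\tau$ thus requires uniform exponential decay as $x\to -\infty$ derived \emph{directly} for \eqref{parab} rather than inherited from the approximation, and the exit from the contact configuration must carefully exploit the transfer of strict positivity from the non-degenerate $u$-component to $v$ via the Robin coupling on $y=0$. This hypoelliptic interplay between the fully elliptic $u$-equation and the degenerate $v$-equation is the new feature absent from the finite-$D$ problem.
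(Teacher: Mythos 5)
Your overall architecture does match the paper's first proof of this theorem (rescale, pass to the limit $D\to\infty$, then a parabolic-in-$x$ sliding argument for uniqueness of the velocity), but two steps you treat as routine are precisely where the work lies, and as written they fail. First, the compactness step: you claim that the $L^\infty$ bounds ``together with standard Schauder estimates'' give $\mathcal C^{2+\alpha}_{loc}$ convergence. This is not available uniformly in $D$: after rescaling, the bulk operator is $-\frac{d}{D}\partial_{xx} - d\partial_{yy} + c\partial_x$, whose ellipticity in the $x$-direction degenerates as $D\to\infty$, so elliptic Schauder constants blow up, and you cannot treat $-\frac{d}{D}\partial_{xx}v$ as a right-hand side for a parabolic estimate without already controlling $\partial_{xx}v$. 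The paper replaces this by a sequence of integral identities yielding uniform $H^1$, $H^2$ and then $H^3_{loc}$ bounds (Lemmas \ref{GN} and \ref{H3estimate}), and only then uses Sobolev embedding plus parabolic regularity for the limit problem (Lemma \ref{regulparab}); without this (or some substitute), your extraction of a convergent subsequence is unjustified.

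Second, the identification of the limit states is wrong as stated: since $f$ is of ignition type, every constant pair $\mu u\equiv v\equiv s$ with $s\in[0,\theta]$ solves the stationary problem, so monotonicity plus the normalisation at $x=0$ does not force the limit at $-\infty$ to be $0$ --- it only forces the limit at $+\infty$ to be $1$. The paper obtains the left limit from the exponential bound of Proposition \ref{tail}, uniform in $D$ (and, in the direct construction of Section \ref{direct}, from a dedicated sliding argument with the supersolution $e^{c_- x}$ proving $v_-=0$); some such quantitative decay as $x\to-\infty$ must be established and passed to the limit. Relatedly, the initial ordering for your uniqueness sliding, which you correctly flag as the main obstacle, is exactly what the paper supplies in Lemma \ref{ordre} via the exponential solutions of Section \ref{upboundDinf} and the monotonicity of the decay rate $\lambda$ in $c$ (inherited from the approximating finite-$D$ solutions); acknowledging the obstacle does not fill it, so your uniqueness argument is incomplete at the same point.
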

\ \\
\noindent We will present two proofs of Theorem \ref{degen}. One by studying the asymptotic behaviour of $c(D)$ thanks to estimates in the same spirit as the ones of Berestycki and Hamel in \cite{BH05}. Another one of independent interest, by a direct method, showing that the system \eqref{parab} is not degenerate despite the absence of horizontal diffusion in the strip. Both proofs consist in showing a convergence of some renormalised profiles to a limiting profile, solution of the limiting system \eqref{parab}.


From now on, we renormalise $(c,u,v)$ in \eqref{normal} by making $$x \leftarrow \sqrt{D}x, c \leftarrow c/\sqrt{D}$$ ending up with the following equations

\begin{equation}
\label{rescaled}
  \begin{tikzpicture}
  \draw (-6,0) -- (6,0) node[pos=0.5,below] {\small{$d\partial_y v = \mu u - v$}} node[pos=0.5,above] {$-u'' + cu' = v - \mu u$};

  \node at (4.3,0.33) {$u\to 1/\mu$};
  \node at (-4.3,0.33) {$0 \leftarrow u$};

  \node at (0,-1.5) {$ -\frac{d}{D} \partial_{xx} v - d\partial_{yy} v + c\partial_x v = f(v)$};

  \draw (-6,-3) -- (6,-3) node[pos=0.5,above] {\small{$-\partial_y v= 0$}};


  \node at (4.3,-1.5) {$v \to 1$};
  \node at (-4.3,-1.5) {$0 \leftarrow v$};

  \end{tikzpicture}
\end{equation}
for which we need to show $$\lim_{D\to+\infty} c(D) = c_\infty >0$$ in order to prove Theorem \ref{cD}.

Before getting into the substance, we would like to mention that there is an important literature about speed-up or slow-down of propagation in reaction-diffusion equations in heterogeneous media and we wish to briefly present some of it.

\subsection*{Some other results}
Closest to our work is the recent paper of Hamel and Zlatoš \cite{HZ}, concerned by the speed-up of a combustion front by a shear flow. Their model is :
\begin{equation}
\label{eqhz}
\partial_t v + A\alpha(y) \partial_x v = \Delta v + f(v), \qquad t\in \mathbb R, (x,y)\in\mathbb R\times\mathbb R^{N-1}
\end{equation}
where $A > 1$ is large, and where $\alpha(y)$ is smooth and $(1,\cdots,1)$-periodic. They show that there exists $\gamma^*(\alpha, f) \geq \int_{\mathbb T^{N-1}} \alpha(y) dy$ such that the velocity $c^*(A\alpha,f)$ of travelling fronts of \eqref{eqhz} satisfies
$$\lim_{A\to+\infty} \frac{c^*(A\alpha,f)}{A} = \gamma^*(\alpha,f)$$
and under an Hörmander type condition on $\alpha$\footnote{Namely, $\alpha$ is smooth and there exists $r \in \mathbb N^*$ such that $\sum_{1 \leq |\zeta | \leq r} |D^\zeta \alpha(y) | > 0$} they characterise $\gamma^*$ as the unique admissible velocity for the following degenerate system where $ \gamma \in \mathbb R, U \in L^\infty$ and $\nabla_y~U\in~L^2~\cap~L^\infty$~:
\begin{equation}
\begin{cases}
\Delta_y U + (\gamma - \alpha(y)) \partial_x U + f(U) = 0\text{ in }\mathscr D'(\mathbb R\times \mathbb T^{N-1}) \\
0 \leq U \leq 1 \text{ a.e. in } \mathbb R\times \mathbb T^{N-1} \\
\lim_{x\to+\infty} U(x,y) \equiv 0 \text{ uniformly in }\mathbb T^{N-1} \\
\lim_{x\to-\infty} U(x,y) \equiv 1 \text{ uniformly in }\mathbb T^{N-1}
\end{cases}
\end{equation}

Let us also give a brief account of other results concerning enhancement of propagation of reaction-diffusion fronts, especially motivated by combustion modelling and in heterogeneous media. In the presence of heterogeneities, quantifying propagation is considerably more difficult than the argument of Remark \ref{rmqhomo}. The pioneering work in this field goes back to the probabilistic arguments of Freidlin and Gärtner \cite{FreGa} in 1979. They studied KPP-type propagation in a periodic environment and showed that the speed of propagation is not isotropic any more : propagation in any direction is influenced by all the other directions in the environment, and they gave an explicit formula for the computation of the propagation speed.

Reaction-diffusion equations in heterogeneous media since then is an active field and the question of the speed of propagation has received much attention. Around 2000, Audoly, Berestycki and Pommeau \cite{ABP}, then Constantin, Kiselev and Ryzhik \cite{CKR} started the study of speed-up or slow-down properties of propagation by an advecting velocity field. This study is continued in \cite{KR} and later by Berestycki, Hamel and Nadirashvili \cite{BHN} and Berestycki, Hamel and Nadin \cite{BHNa} through the study of the relation between the principal eigenvalue and the amplitude of the velocity field.

Apart from speed-up by a flow field, the influence of heterogeneities in reaction-diffusion is studied in a series of paper \cite{BHNI,BHNII} published in 2005 and 2010, where Berestycki, Hamel and Nadirashvili, following \cite{BHPerEx} gave some new information about the influence of the geometry of the domain and the coefficients of the equation. The first paper deals with a periodic environment, the second with more general domains. In 2010 also, explicit formulas for the spreading speed in slowly oscillating environments were also given for the first time by Hamel, Fayard and Roques in \cite{HFR}.

The influence of geometry on the blocking of propagation was also studied in periodic environment by Guo and Hamel \cite{JSH} and in cylinders with varying cross-section by Chapuisat and Grenier \cite{GreCha}.



The present paper highlights a totally different mechanism of speed-up by the heterogeneity, through a fast diffusion on a line.

\subsection*{Organisation of the paper}
The strategy of proof is the following : first, we show that there exists constants $0 < m < M$ independent of $D$ such that $m < c(D) < M$. Then, we show that the limit point of $c(D)$ as $D\to +\infty$ is unique and we characterise it, which proves Theorem \ref{cD} and \ref{degen}. Another section is devoted to the proof of direct existence for system \eqref{parab}. More precisely, the organisation is as follows : 

\begin{itemize}
 \item In Section \ref{upboundDinf} we compute positive exponential solutions of the linearised near $0$ of \eqref{rescaled}. Those are fundamental to study the tail of the solutions as $x\to -\infty$ for comparison purposes. We use them to show that $c(D) \leq M$.
 \item Section \ref{lowerbound} is devoted to showing that $c(D) \geq m$ by proving some integral estimates.
 \item Section \ref{equiv} proves Theorem \ref{cD} by showing the uniqueness of the limiting point $c_\infty > 0$ of $c(D)$. This uses integral identities and a mixed parabolic-elliptic sliding method.
 \item Finally, in Section \ref{direct} we construct travelling waves to the limiting system \eqref{parab} by a direct method, proving Theorem \ref{degen}. For this, we treat $x$ as a time variable and combine standard parabolic and elliptic theory.
\end{itemize}

\section{Positive exponential solutions, upper bound}
\label{upboundDinf}
We compute positive exponential solutions of \eqref{rescaled} with $f=0$. Those play an important role for comparison purposes as $x\to -\infty$ and in the construction of supersolutions. Looking for $\phi(x) = e^{\lambda x}, \psi(x,y) = e^{\lambda x} h(y)$ with $h > 0$ we get the equations

\begin{equation}
\begin{cases}
\label{eqhl}
  -h'' + \lambda\left(\frac{c}{d}-\frac{1}{D}\lambda\right)h = 0\text{ for } y\in(-L,0) \\
  h'(-L) = 0 \\
  dh'(0) =\mu - h(0) \\
  -\lambda^2 + c\lambda = h(0) - \mu
 \end{cases}
\end{equation}
Since we are interested in the asymptotic behaviour of $c(D)$, we can assume $D > d$ and get a solution given by

$$\begin{cases}
    \psi^D(x,y) = \displaystyle\frac{\mu e^{\lambda x}\cosh\left(\beta(\lambda)\left(y+L\right)\right) }{\cosh\left(\beta(\lambda) L\right) + d\beta(\lambda)\sinh\left(\beta(\lambda)L\right)} = e^{\lambda x}h^D(y)\\
    \phi^D(x) = e^{\lambda x}
  \end{cases}$$
where $\beta(\lambda) = \sqrt{\lambda(\frac{c}{d}-\frac{\lambda}{D})}$ and with $c < \lambda < c\frac{D}{d}$ solving 
\begin{equation}
\label{eqlambda}
-\lambda^2 + c\lambda = \frac{-\mu d \beta(\lambda)\tanh(\beta(\lambda)L)}{1+d\beta(\lambda)\tanh(\beta(\lambda)L)}
\end{equation}
as pictured in figure \ref{figlambdaDinf}. 
\begin{figure}[h!]
 \centering
 \includegraphics[scale=0.4,keepaspectratio=true]{./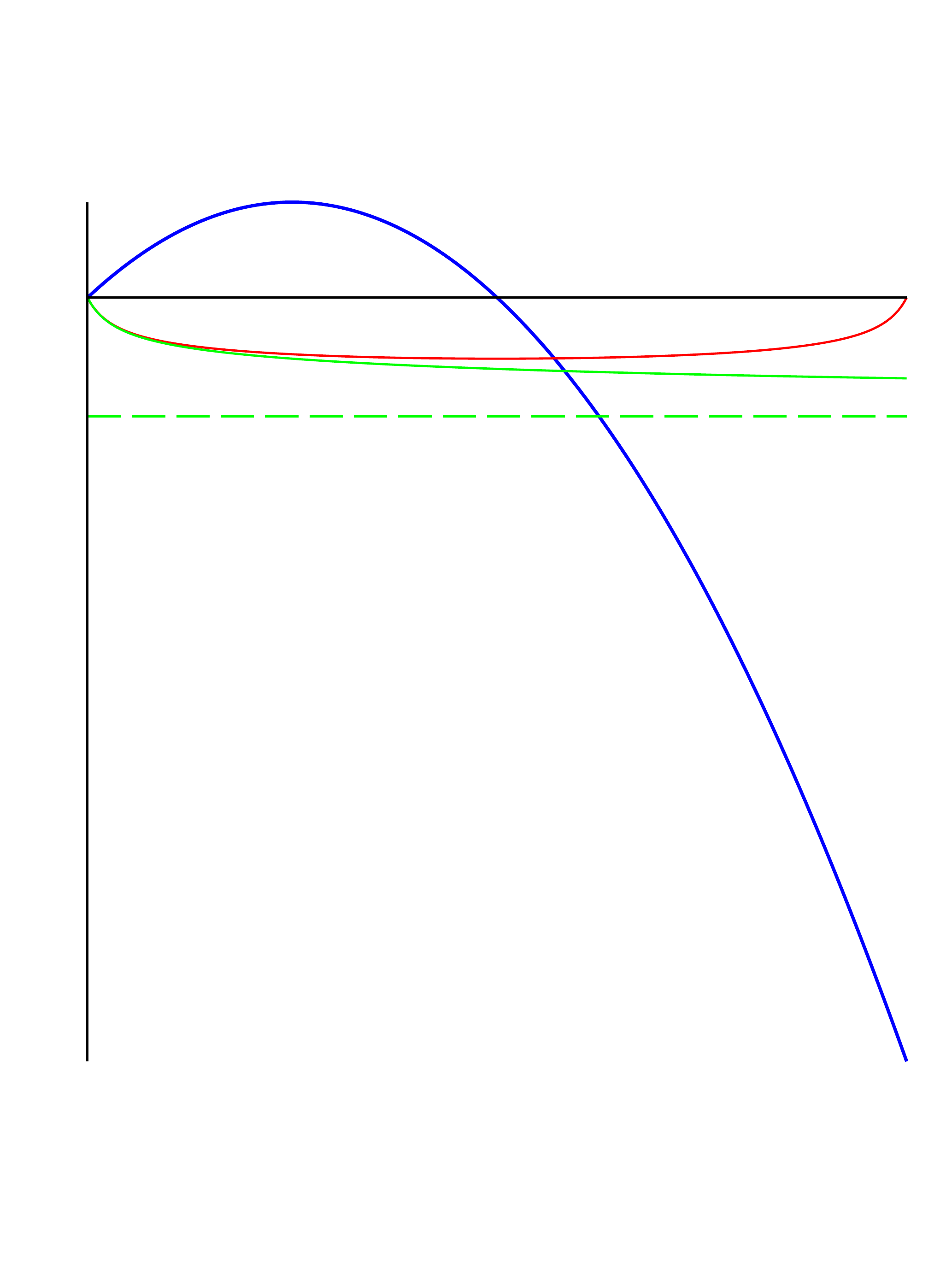}
\vspace{-40pt}
 \caption{Eq. on $\lambda$ in \eqref{eqhl}}
\label{figlambdaDinf}
    \rput(0.1,8.2){$c$}
        \rput(4.2,8.2){$cD/d$}
     \rput(-1,9.6){\textcolor{blue}{$-\lambda^2 + c\lambda$}}
    \rput(1.8,9){\textcolor{red}{$-\frac{\mu d\beta(\lambda) \tanh(\beta(\lambda) L)}{1+d\beta(\lambda) \tanh(\beta(\lambda) L)}$}}
    \rput(3,7){\textcolor{green}{$y=-\mu$}}
\end{figure}
Moreover, since the right-hand side of \eqref{eqlambda} is a decreasing function of $D$, we know that $\lambda$ is an increasing function of $D$, so $\lambda < \frac{c+\sqrt{c^2+4\mu}}{2}$ (see on Figure  \ref{figlambdaDinf} the horizontal asymptote $-\mu$ of the graph of the right-hand side of the equation when $D= + \infty$). Thus we have the uniform bounds in $D$ : $$c < \lambda < \frac{c+\sqrt{c^2+4\mu}}{2}$$

\begin{rmq}
\label{limitelambda}
Actually we have even better : since the right-hand side of \eqref{eqlambda} converges to $$\frac{-\mu\sqrt{dc\lambda}\tanh(\sqrt{d^{-1}c\lambda}L)}{1+\sqrt{dc\lambda}\tanh(\sqrt{d^{-1}c\lambda}L)}$$ as $D\to +\infty$, we know that $\lambda$ increases to the solution of 
\begin{equation*}
\label{eqlambdaDinf}
-\lambda^2 + c\lambda = \frac{-\mu\sqrt{dc\lambda}\tanh(\sqrt{d^{-1}c\lambda}L)}{1+\sqrt{dc\lambda}\tanh(\sqrt{d^{-1}c\lambda}L)}
\end{equation*}
as pictured on Figure \ref{figlambdaDinf}.
\end{rmq}

We will also keep in mind that for every $D \in (d,+\infty]$, $c\mapsto \lambda(D,c)$ is an increasing function, indeed $-\lambda^2 + c\lambda$ is increasing and the right-hand side of the equation is decreasing (it can be written $\frac{-\mu g(c)}{1+g(c)}$ with $g'(c) > 0$).


From now on, we normalise $h(y)$ such that $\min_{y\in [-L,0]} h(y) = 1$ and study the tail of the fronts as $x\to -\infty$.

\begin{prop}
\label{tail}
Let $(c,u,v)$ denote the unique solution of \eqref{rescaled} that satisfies 
\begin{equation}
\label{normalcond}
\max_{x \leq 0, y\in[-L,0]}\ (\mu u(x), v(x,y)) = \theta
\end{equation}
and call $m = \min\left( \min_{y\in[-L,0]} v(0,y), \mu u(0) \right)$. Then on $x\leq 0$, $$\frac{m}{\max h} e^{\lambda x}h(y) \leq \mu u, v \leq \theta e^{\lambda x} h(y)$$
with the notations of Section \ref{upboundDinf}.
\end{prop}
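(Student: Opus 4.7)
The plan is to exploit the ignition structure of $f$, which makes the system effectively linear on $\{x\leq 0\}$, and to compare $(\mu u, v)$ there with scalar multiples of the exponential solution $(\phi^D, \psi^D)$ built in Section \ref{upboundDinf}. Since the preceding existence theorem gives $u',\partial_x v>0$, the normalization \eqref{normalcond} forces $\mu u(x), v(x,y)\leq \theta$ on $\{x\leq 0\}\times[-L,0]$; as $f\equiv 0$ on $[0,\theta]$, the system \eqref{rescaled} reduces there to a homogeneous linear cooperative system, to which the positive exponential solution (suitably normalized so that the $v$-component has the form $e^{\lambda x}h(y)$ with $\min h=1$) provides the natural family of comparison functions.

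For the upper bound I would test $\theta(\phi^D,\psi^D)$ against $(\mu u,v)$: at $x=0$ the normalization gives $\mu u(0)\leq \theta$ and $v(0,y)\leq \theta\leq \theta h(y)$ (using $\min h=1$), while both pairs tend uniformly to $0$ as $x\to -\infty$. For the lower bound I would test $(m/\max h)(\phi^D,\psi^D)$: at $x=0$ the very definition of $m$ yields $\mu u(0)\geq m\geq m/\max h$ and $v(0,y)\geq m\geq (m/\max h)h(y)$, and again both sides vanish as $x\to-\infty$. In both comparisons the boundary data on $\{x=0\}$ and the decay at $-\infty$ are exactly what one needs to sandwich $(\mu u,v)$ by the proposed exponentials.

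In each case, the difference between the tested exponential and $(\mu u,v)$ solves a homogeneous linear cooperative system on the unbounded half-plane $\{x\leq 0\}$, with non-negative boundary data at $\{x=0\}$ and vanishing limit as $x\to-\infty$. The main technical obstacle is to run the maximum principle rigorously on this unbounded coupled domain, whose $v$-component lives in a strip with a Robin coupling at $\{y=0\}$ to the one-dimensional equation for $u$. I would argue by contradiction: the strong maximum principle inside the strip rules out an interior negative minimum of the $v$-component; the Hopf lemma at $\{y=-L\}$ and $\{y=0\}$, combined with the algebraic coupling to the line equation, excludes a negative boundary minimum; and a Phragm\'en--Lindel\"of-type barrier (exploiting the boundedness of the coefficients together with the fact that both $(\mu u,v)$ and the exponential test decay uniformly to $0$ as $x\to-\infty$) rules out the infimum being approached at $-\infty$. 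This delivers both inequalities simultaneously.
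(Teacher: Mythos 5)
Your proposal is correct and follows essentially the same route as the paper: on $x\le 0$ the normalisation (with monotonicity) forces $\mu u, v\le\theta$ so that $f(v)=0$ and the system is linear there, and the paper compares with $\theta e^{\lambda x}h(y)$ and $\frac{m}{\max h}e^{\lambda x}h(y)$ exactly as you propose, excluding a negative minimum of the difference by the strong maximum principle in the strip, Hopf's lemma at $y=0$ and $y=-L$, and the minimum principle for the line equation on $u$ (which is where the Robin coupling enters). The only cosmetic difference is that no Phragm\'en--Lindel\"of barrier is needed: since both $(\mu u, v)$ and the exponential pair decay to $0$ uniformly as $x\to-\infty$ while the difference is nonnegative at $x=0$, a negative infimum would automatically be attained at a finite point, which is all the paper uses.
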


\begin{proof} 
Call $\mu \bar u =\theta e^{\lambda x}, \bar v = \theta e^{\lambda x} h(y)$. Then $\mu U := \mu (\bar u - u), V:= \bar v - v$ satisfy :

\begin{equation*}
  \begin{tikzpicture}
  \draw (-6,0) -- (4.3,0) node[pos=0.5,below] {\small{$d\partial_y V = \mu U - V(x,0)$}} node[pos=0.5,above] {$-U'' + cU' = V(x,0) - \mu U$};

  \node at (4.3,0.33) {$U \geq 0$};
  \node at (-5,0.33) {$0 \leftarrow U$};

  \node at (0,-1.5) {$-\frac{d}{D}\partial_{xx} V - d\partial_{yy} V + c\partial_x V = 0$};

  \draw (-6,-3) -- (4.3,-3) node[pos=0.5,above] {\small{$-\partial_y v= 0$}};


  \node at (5.2,-1.5) {$V \geq 0$};
  \node at (-5,-1.5) {$0 \leftarrow V$};
  
  \draw (4.3,0) -- (4.3,-3);

  \end{tikzpicture}
\end{equation*}
Suppose there is a point where $V < 0$. Since $V$ decays to $0$ uniformly in $y$ as $x \to -\infty$, $V$ reaches a negative minimum somewhere. By the normalisation condition \eqref{normalcond}, the strong maximum principle and Hopf's lemma (see \cite{GT}), it can only be on $x < 0, y=0$ and at this point we have $\mu U < \min V$.

This is a contradiction : looking at the equation on $U$, its limit as $x \to -\infty$ and its non-negative value at $x=0$, we can assert that it reaches a minimum at some $x_U < 0$ where the equation gives $\mu U(x_U) = V (x_U) + U''(x_U) \geq V(x_U) \geq \min V$. In the end, $V \geq 0$ and the maximum principle applied on $U$ gives $U \geq 0$.

The exact same argument applied on $u - \underline u, v - \underline v$ give the other inequality.
\end{proof}

\begin{prop}
\label{cmax}
There is a uniform bound in $D$ on the velocity $c(D)$ of solutions of \eqref{rescaled}~: $$c(D) \leq \sqrt{\frac{D}{D-d}\text{Lip} f} \sim_{D\to+\infty} \sqrt{\text{Lip} f} $$
\end{prop}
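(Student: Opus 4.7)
The plan is to deploy a Kanel-style integral argument: multiply the equations by the weight $e^{-cx}$ and exploit the Lipschitz bound $f(v)\leq (\mathrm{Lip}\,f)\,v$ (valid since $f(0)=0$) together with the exponential tail of Proposition \ref{tail}. Introduce the Laplace-type transforms
\begin{equation*}
V(y) := \int_{\mathbb R} v(x,y)\,e^{-cx}\,dx, \qquad U := \int_{\mathbb R} u(x)\,e^{-cx}\,dx .
\end{equation*}
First I would verify these integrals converge. On $x\leq 0$, Proposition \ref{tail} controls $v$ and $\mu u$ by $\theta e^{\lambda x}h(y)$ where Section \ref{upboundDinf} guarantees $\lambda>c$, so the integrands decay like $e^{(\lambda-c)x}$. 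On $x\geq 0$ the solutions are bounded while $e^{-cx}$ is integrable. Interior Schauder estimates extend the same control to the first two $x$-derivatives, which will legitimise the integrations by parts below.

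The core calculation: multiply the equation for $v$, bounded above by $(\mathrm{Lip}\,f)v$, by $e^{-cx}$ and integrate over $x\in\mathbb R$. Two integrations by parts give $\int v_x\,e^{-cx}\,dx = cV(y)$ and $\int v_{xx}\,e^{-cx}\,dx = c^2 V(y)$ (all boundary terms vanish by the exponential decay), while $\int v_{yy}\,e^{-cx}\,dx = V''(y)$. The PDE inequality collapses into the ODE inequality
\begin{equation*}
-d V''(y) + \left[c^2\!\left(1-\tfrac{d}{D}\right) - \mathrm{Lip}\,f\right] V(y) \leq 0 \quad \text{on } [-L,0].
\end{equation*}
Integrating over $y$, the Neumann condition $V'(-L)=0$ and the exchange condition $d V'(0) = \mu U - V(0)$ (the latter obtained by integrating $d v_y(x,0) = \mu u(x)-v(x,0)$ against $e^{-cx}$) lead to
\begin{equation*}
V(0) - \mu U \leq \left[\mathrm{Lip}\,f - c^2\,\tfrac{D-d}{D}\right] \int_{-L}^{0} V(y)\,dy.
\end{equation*}

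To finish, I would test the $u$-equation against $e^{-cx}$: the terms arising from $-u''$ and $cu'$ equal $-c^2 U$ and $+c^2 U$ respectively, cancelling exactly, so the integrated equation reduces to $0 = V(0) - \mu U$. The left-hand side of the preceding displayed inequality therefore vanishes, and since $v>0$ forces $\int_{-L}^{0} V(y)\,dy > 0$, one must have $c^2(D-d)/D \leq \mathrm{Lip}\,f$, which is the bound. The only delicate point — really the main chore — is to justify rigorously that every boundary term in the successive integrations by parts vanishes, but this is routine given the rate-$\lambda>c$ exponential decay at $-\infty$ and the exponential decay of the weight $e^{-cx}$ at $+\infty$.
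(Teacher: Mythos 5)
Your argument is correct, but it proves the bound by a genuinely different route than the paper. The paper's proof is a pointwise comparison: it takes $\mu\bar u=\bar v=e^{cx}$, notes it is a supersolution of \eqref{rescaled} as soon as $c^2(1-d/D)\geq \text{Lip} f$, and runs a sliding argument (strong maximum principle plus Hopf's lemma, first on a compact part, then near $\pm\infty$ where the sign of the linearised zero-order term is known) to reach a contradiction. You instead integrate the equations against the weight $e^{-cx}$, which lies in the kernel of the adjoint of $-\partial_{xx}+c\partial_x$; this makes the $u$-equation give exactly $\mu U=V(0)$, while the $v$-equation, after $f(v)\leq(\text{Lip} f)\,v$ and integration in $y$ with the Neumann and exchange conditions, gives $V(0)-\mu U\leq\bigl[\text{Lip} f-c^2(1-d/D)\bigr]\int_{-L}^0 V$, whence $c^2(1-d/D)\leq\text{Lip} f$ since $\int_{-L}^0 V>0$. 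Both proofs lean on the same input from Proposition \ref{tail}, namely $\lambda>c$: the paper uses it to start the sliding, you use it (after fixing the normalisation \eqref{normalcond}, which is harmless since $c$ is translation-invariant) to make $U$, $V$ finite and to kill the boundary terms in the integrations by parts; for the latter you should invoke Schauder estimates up to the boundary of the strip (with the Robin/Neumann conditions), not merely interior ones, but that is indeed routine, as is the corresponding ODE estimate for $u'$, $u''$. What each approach buys: yours is shorter and purely computational once the decay is in place, and it exposes the algebraic reason for the constant; the paper's comparison argument avoids all integrability bookkeeping and, more importantly, is the template that is reused later in situations where your integrals are not available or the level sets are not controlled (the parabolic uniqueness lemma in Section \ref{equiv} and Proposition \ref{cmaxM} on the finite box), which is presumably why the author sets it up here.
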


\begin{proof}
Call $\mu\bar u = \bar v = e^{cx}$. A simple computation shows that if $$c^2 (1-d/D) \geq \text{Lip} f$$ then $(\bar u, \bar v)$ is a supersolution of \eqref{rescaled}. We now use a sliding argument (see \cite{BN91}, \cite{V}):

Since $\lambda > c$ in Prop. \ref{tail}, we know that the graph of $e^{cx}$ is asymptotically above the ones of $\mu u$ and $v$. Knowing this and since $\mu u, v \leq 1$, we can translate the graph of $e^{cx}$ to the left above the ones of $\mu u$ and $v$. Now we slide it back to the right until one of the graphs touch, which happens since $\mu u, v \to 1$ as $x\to +\infty$ whereas $e^{cx} \to 0$ as $x\to -\infty$, uniformly in $y$. What we just said proves that
$$r_0 = \inf \{ t\in \mathbb R \mid \bar v(t + x,y) - v(x,y) > 0\textbf{ and } \bar u(t + x,y) - u(x) > 0\}$$
exists as an $\inf$ over a set that is non-void and bounded by below. Now call $$U(x):= \bar u(r_0 + x) - u(x)$$ $$V(x,y):= \bar v(r_0 + x) - v(x,y)$$ By continuity, $U,V \geq 0$. But $\mu U, V$ satisfy

\begin{equation*}
  \begin{tikzpicture}
  \draw (-6,0) -- (6,0) node[pos=0.5,below] {\small{$d\partial_y V = \mu U - V(x,0)$}} node[pos=0.5,above] {$-U'' + cU' = V(x,0) - \mu U$};

  \node at (5.3,0.33) {$U\to +\infty$};
  \node at (-5.3,0.33) {$0 \leftarrow U$};

  \node at (0,-1.5) {$-\frac{d}{D}\partial_{xx} V - d\partial_{yy} V + c\partial_x V + k(x,y)V \geq 0 $};

  \draw (-6,-3) -- (6,-3) node[pos=0.5,above] {\small{$-\partial_y v= 0$}};


  \node at (5.3,-1.5) {$V \to +\infty$};
  \node at (-5.3,-1.5) {$0 \leftarrow V$};

  \end{tikzpicture}
\end{equation*}
where $k(x,y) = -\displaystyle\frac{f(\bar v(r_0 + x)) - f(v(x,y))}{\bar v(r_0 + x) - v(x,y)} \in L^\infty$ since $f$ is Lipschitz. Using the strong maximum principle to treat a minimum that is equal to $0$ (so that no assumption on the sign of $k$ is needed) and treating the boundary $y=0$ as above, knowing that $V \not\equiv 0$ we end up with $V > 0$.

But then for any fixed compact $K_a = [-a,a]\times [-L,0]$, $\min_{K_a} V, \min_{[-a,a]} U > 0$ so that we can translate the graph of $e^{cx}$ a little bit more to the right while still being above the ones of $\mu u$ and $v$ on $K_a$, i.e. $\bar u(r_0 - \varepsilon_a + x) - u(x) > 0$ on $[-a,a]$ and $\bar v(r_0 - \varepsilon_a + x) - v(x,y) > 0$ on $K_a$ for $\varepsilon_a > 0$ small enough.

Now just chose $a$ large enough so that on resp. $x < -a$ and $x > a$, $\mu \bar u(r_0 + \varepsilon_a + x), \bar v(r_0 + \varepsilon_a + x,y), \mu u, v$ are resp. close enough to $0$ or large enough so that $k_a(x,y)=-\frac{f(\bar v(r_0-\varepsilon_a + x)) - f(v(x,y))}{\bar v(r_0 - \varepsilon_a + x) - v(x,y)}$ has the sign of $-f'(0) = 0$ or $-f'(1) > 0$. Now the maximum principle applies just like above on $x < -a$ and $x > a$ and concludes that $\bar u(r_0 - \varepsilon_a + x) - u(x), \bar v(r_0 - \varepsilon_a + x) - v(x,y) > 0$ on the whole $\mathbb R \times \Omega_L$, which is a contradiction with the definition of $r_0$.

In the end, no such $\bar u, \bar v$ can exist, i.e. $c^2 (1-d/D) \leq \text{Lip} f$.

\end{proof}

\begin{rmq}
This proof shows how rigid the equations of fronts are when involving a reaction term with $f'(0), f'(1) \leq 0$ : it is shown in \cite{LD14} that there is no supersolution or subsolution (in a sense defined in \cite{LD14}) except the solution itself and its translates. This fact was already noted in \cite{BN90, V} for Neumann boundary value problems.
\end{rmq}

\section{Proof of the lower bound}
 \label{lowerbound}
 In this section we show the following : 
 \begin{prop}
$$\inf_{D>d} c(D) = c_{min} > 0$$
 \end{prop}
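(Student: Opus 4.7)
The strategy rests on two integral identities for any travelling wave $(c,u,v)$ of \eqref{rescaled}. Multiplying the $v$-equation by $\partial_x v$ and integrating on $\Omega_L$, and the $u$-equation by $\mu u'$ and integrating on $\mathbb{R}$, the coupling conditions $d\partial_y v(x,0)=\mu u-v(x,0)$ and $\partial_y v(x,-L)=0$ make the boundary terms pair up, and summing the two yields the energy identity
\[
c\left[\int_{\Omega_L}(\partial_x v)^2 + \mu\int_{\mathbb{R}}(u')^2\right] = LF(1),\qquad F(s):=\int_0^s f.
\]
Integrating the $u$-equation over $\mathbb{R}$ gives $\int_{\mathbb{R}}(v(x,0)-\mu u)\,dx = c/\mu$, which inserted in the integrated $v$-equation produces the mass identity
\[
c\left(L+\tfrac{1}{\mu}\right) = \int_{\Omega_L} f(v).
\]
In particular, a uniform-in-$D$ upper bound on the bracket appearing in the energy identity is equivalent to the desired lower bound on $c(D)$.

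The plan is then to argue by contradiction: assume $c(D_n)\to 0$ along some sequence $D_n>d$. The mass identity forces $\int_{\Omega_L}f(v_n)\to 0$. The $L^\infty$ bounds $0\leq u_n\leq 1/\mu$, $0\leq v_n\leq 1$, combined with standard Schauder/parabolic estimates (casting the strip equation in its parabolic-in-$x$ form to handle the degeneracy $D_n\to+\infty$), yield $\mathcal{C}^{2,\alpha}_{loc}$ compactness. Up to extraction $(u_n,v_n)\to (u_\infty,v_\infty)$ locally in $\mathcal{C}^2$, a bounded monotone-in-$x$ solution of the stationary system at $c=0$; Fatou gives $f(v_\infty)\equiv 0$, hence $v_\infty\in[0,\theta]\cup\{1\}$ pointwise, and by continuity and monotonicity either $v_\infty\leq\theta$ everywhere or $v_\infty\equiv 1$. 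The normalization \eqref{normalcond} carries over to the limit and rules out the second alternative. Combined with a control of the $x$-location of the intermediate level set $\{v_n = (1+\theta)/2\}$ -- itself a consequence of the upper exponential tail of Section~\ref{upboundDinf} together with Proposition~\ref{cmax} -- this forces the limit to genuinely connect $v_\infty(-\infty,\cdot)=0$ to $v_\infty(+\infty,\cdot)=1$.

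Applied to the limit $(0,u_\infty,v_\infty)$, the energy identity then reads $0=LF(1)>0$, the sought contradiction, whence $\inf_{D>d}c(D)>0$. The main obstacle is the last compactness step: preventing the limit profile from degenerating to a constant at or below $\theta$, with the ``right state'' $v=1$ escaping to $+\infty$. This is where Proposition~\ref{tail} (pinning down the left tail) and Proposition~\ref{cmax} combined with the mass identity come in, localizing the transition $\{\theta<v<1-\varepsilon\}$ within a window of bounded length independent of $n$.
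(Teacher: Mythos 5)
Your two integral identities are fine (the mass identity is exactly the one the paper uses, and your energy identity $c\bigl[\int_{\Omega_L}(\partial_x v)^2+\mu\int_{\mathbb R}(u')^2\bigr]=LF(1)$ is a correct consequence of multiplying by $\partial_x v$ and $u'$), but the argument breaks at the compactness step, and this is not a technicality — it is the whole difficulty of the proposition. When $c_n\to 0$ and $D_n\to\infty$ simultaneously, \emph{both} mechanisms that give regularity in $x$ disappear: the coefficient $\tfrac{d}{D_n}$ of $\partial_{xx}v_n$ and the ``time-derivative'' coefficient $c_n$ of $\partial_x v_n$ both vanish, so there is no uniform parabolic-in-$x$ estimate and no uniform $\mathcal C^{2,\alpha}_{loc}$ (nor even $H^1_{loc}$ in $x$) bound. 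Note that the paper's own bound on $\int(\partial_x\psi_n)^2$ in Lemma \ref{H3estimate} is $\leq (L\sup f+C)/c_{min}$, i.e.\ it is proved \emph{after} and \emph{using} the lower bound on $c$; invoking it here would be circular, which is also why ``bounding the bracket uniformly'' is not an available route. Section \ref{direct} even shows that limits with $c=0$ are genuinely discontinuous in $x$, so the limit object you extract need not possess $\partial_x v_\infty$ at all, and your final step --- evaluating the energy identity on the limit to get $0=LF(1)$ --- cannot be performed. Moreover, the localization of the level set $\{v_n=(1+\theta)/2\}$ that you need is not delivered by the tools you cite: the decay exponent $\lambda_n$ of Section \ref{upboundDinf} tends to $0$ together with $c_n$ (from \eqref{eqlambda}, if $\lambda_n\to\lambda_*>0$ then $\beta_n\leq\sqrt{\lambda_n c_n/d}\to0$ forces the right-hand side to $0$ while the left-hand side tends to $-\lambda_*^2<0$), Proposition \ref{tail} only bounds the tail on the \emph{left}, and Proposition \ref{cmax} is an upper bound on $c$ with no spatial information. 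Nothing prevents the transition layer from drifting to $+\infty$, leaving a limit with $v_\infty\leq\theta$ everywhere, $f(v_\infty)\equiv 0$, and no contradiction; ruling out exactly this degeneration is the heart of the proof, and your proposal leaves it unaddressed.

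The paper circumvents compactness altogether. It normalises by pinning a value where $f$ is active, $\psi_n(0,0)=\theta_1\in(\theta,1)$, and works only with the quantitative consequences of the identity \eqref{ipph1}: since every term on its left-hand side tends to $0$, one gets $\int_{\mathbb R}\phi_n'\partial_x\psi_n(\cdot,0)\to0$ and $\int_{\Omega_L}(\partial_y\psi_n)^2\to0$. The first is converted into $\int_{-1}^1\partial_x\psi_n(\cdot,0)\to0$ through the kernel representation $\phi_n=K_n*\psi_n(\cdot,0)$ with $K_n\geq\alpha_0>0$ on compacts (Lemma \ref{lemdelta}), so $\psi_n(\cdot,0)$ barely oscillates around $\theta_1$ on $[-1,1]$; the second, via a Chebyshev-type argument (Lemma \ref{lemborel}), controls the $y$-oscillation on a subset of $[-1,1]$ of measure at least $1$. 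Hence $f(\psi_n)\geq C>0$ on a set of measure $\geq L$, and the mass identity $c_n(L+1/\mu)=\int_{\Omega_L}f(\psi_n)$ gives the contradiction directly, with no limiting profile ever extracted. If you want to salvage a compactness-based proof, you would have to work with monotonicity/BV-type convergence in $x$ (as the paper does in Section \ref{direct} after rescaling $x$ by $c_n$) and derive the contradiction at the level of the degenerate slice-wise system $-d\partial_{yy}v=f(v)$, $-u''=v(\cdot,0)-\mu u$ (e.g.\ by the concavity argument used there), not from the energy identity — and you would still need a genuine substitute for the missing localization of the transition layer.
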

We proceed by contradiction. Suppose that $\inf c(D) = 0$. Then there exists a sequence $D_n \to \infty$ (since $c$ is a continuous function of $D$, see \cite{LD14}) such that the associated solutions $(c_n,\phi_n,\psi_n)$ satisfy $c_n \to 0$. 
Moreover, integrating by parts the equation on $v$ in \eqref{rescaled} and using elliptic estimates to assert $u', \partial_x v \to 0$ as $x\to\pm\infty$ we get $$c_n=\frac{1}{L+1/\mu} \int_{\Omega_L} f(\psi_n)$$ so we know that $\int_{\Omega_L} f(\psi_n) \to 0$ which also gives $$\int_{\Omega_L} f(\psi_n)\psi_n \leq \int_{\Omega_L} f(\psi_n) \to 0$$ Multiplying the equation by $\psi_n$ and integrating by parts yields

\begin{equation}
\label{ipph1}
\frac{d}{D_n} \int_{\Omega_L}\left(\partial_x\psi_n\right)^2 + d\int_{\Omega_L} \left(\partial_y\psi_n\right)^2  + \int_{\mathbb R} \phi_n'\partial_x\psi_n(\cdot,0) + c_n\int_{\mathbb R}\phi_n'\psi_n(\cdot,0) + \frac{c_nL}{2} = \int_{\Omega_L} f(\psi_n)\psi_n\end{equation}
All the terms in the left hand side of this expression are positive quantities, so each one of them must go to zero as $n \to\infty$. 
Now, we normalise $\psi_n$ by $$\psi_n(0,0) = \theta_1 \in ]\theta, 1[$$ and assert the following :

\begin{lem}
\label{lemdelta}
 Fix $\delta > 0$ small. There exists $N > 0$ such that for all $n > N$ we have for all $ -1 \leq x \leq 1$ : $$ \left(1-\frac{\delta}{2}\right)\theta_1 < \psi_n(x,0) < \left(1+\frac{\delta}{2}\right)\theta_1 $$ 
\end{lem}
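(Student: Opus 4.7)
The plan is a compactness and limiting argument based on the integral bounds from \eqref{ipph1} and the $1$D ODE for $\phi_n$. The decisive integral ingredient is $\int_{\Omega_L}(\partial_y\psi_n)^2 \to 0$, combined with the Poincaré--Cauchy--Schwarz bound $\|\psi_n(\cdot,y) - \psi_n(\cdot,0)\|_{L^2(\Omega_L)}^2 \leq L^2 \int_{\Omega_L}(\partial_y\psi_n)^2$, which forces any subsequential limit of $\psi_n$ to be independent of $y$ and to agree with its trace at $y=0$.

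First I would establish uniform (in $n$) regularity. Since $\phi_n\in[0,1/\mu]$, $\psi_n\in[0,1]$ and $c_n$ is bounded, the ODE $-\phi_n'' + c_n\phi_n' = \psi_n(\cdot,0) - \mu\phi_n$ has uniformly bounded right-hand side, yielding uniform bounds on $\phi_n$ in $W^{2,\infty}_{\mathrm{loc}}(\mathbb{R})$. By Arzelà--Ascoli I extract a subsequence so that $\phi_n \to \phi_\infty$ in $C^{1}_{\mathrm{loc}}(\mathbb{R})$ and $\psi_n \to \psi_\infty$ in $L^{2}_{\mathrm{loc}}(\Omega_L)$, with $\psi_\infty=\psi_\infty(x)$ by weak lower semi-continuity applied to $\partial_y\psi_n$.

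Passing to the limit in the Robin condition $d\partial_y\psi_n(\cdot,0) = \mu\phi_n - \psi_n(\cdot,0)$ (for instance via the trace identity $\psi_n(\cdot,0) = -\phi_n'' + c_n\phi_n' + \mu\phi_n$) gives $\psi_\infty(\cdot,0) = \mu\phi_\infty$, which is Lipschitz in $x$ with a Lipschitz constant controlled by $\mu$, $d$, $L$ only. The normalisation $\psi_n(0,0) = \theta_1$ passes to the limit so that $\psi_\infty(0) = \theta_1$, and uniform convergence of $\psi_n(\cdot,0)$ on $[-1,1]$ together with the continuity of $\psi_\infty$ at $0$ yield the desired bound $|\psi_n(x,0) - \theta_1| < \delta\theta_1/2$ on $[-1,1]$ for all $n$ large enough.

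The main obstacle is making this last uniform convergence step rigorous: the PDE for $\psi_n$ degenerates as the coefficient of $\partial_{xx}$ is $d/D_n\to0$, so classical Schauder estimates do not give uniform $C^{0,\alpha}$ bounds on $\psi_n$ itself. The way around is to transfer regularity through the (non-degenerate) $\phi_n$-ODE and the Robin boundary condition, using the integral control $\int(\partial_y\psi_n)^2\to0$ to make the approximation $\psi_n(\cdot,0)\approx\mu\phi_n$ rigorous; this is precisely what is needed, since the conclusion only concerns the trace at $y=0$.
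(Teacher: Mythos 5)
Your compactness strategy has genuine gaps, the most serious being the final step. Even granting a subsequence along which $\psi_n(\cdot,0)\to g$ uniformly on $[-1,1]$ with $g$ continuous and $g(0)=\theta_1$, continuity of $g$ at $0$ only gives $|g(x)-\theta_1|<\delta\theta_1/2$ on some small neighbourhood of $0$ whose size depends on $\delta$, not on the whole fixed interval $[-1,1]$: the lemma requires the oscillation of $\psi_n(\cdot,0)$ over $[-1,1]$ to vanish, i.e.\ the limit trace to be (nearly) constant there, and nothing in your argument delivers this. The ingredient that does deliver it --- and which you never use --- is the vanishing of the cross term $\int_{\mathbb R}\phi_n'\,\partial_x\psi_n(\cdot,0)$ coming from \eqref{ipph1}. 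The paper writes $\phi_n=K_n*\psi_n(\cdot,0)$ with $\widehat{K_n}(\xi)=\left(\xi^2-c_ni\xi+\mu\right)^{-1}$, notes that $K_n\geq\alpha_0>0$ on $[-2,2]$ uniformly in $n$, and, since $\partial_x\psi_n\geq 0$, obtains $\alpha_0\bigl(\int_{-1}^1\partial_x\psi_n(\cdot,0)\bigr)^2\leq\int_{\mathbb R}\phi_n'\,\partial_x\psi_n(\cdot,0)\to 0$; monotonicity in $x$ and the normalisation $\psi_n(0,0)=\theta_1$ then give the two-sided bound directly, with no compactness argument at all.

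The intermediate steps also do not hold as stated. Arzel\`a--Ascoli applies to $\phi_n$ but not to $\psi_n$: at this stage there is no uniform $H^1$ bound in $x$ (only $\frac{d}{D_n}\int_{\Omega_L}(\partial_x\psi_n)^2$ is controlled, and the $H^1_{loc}$ estimates of Section \ref{equiv} presuppose $c_n\to c>0$, i.e.\ the very lower bound being proved), so strong $L^2_{loc}$ compactness of $\psi_n$ would have to come from the monotonicity in $x$ (a Helly-type selection combined with the $\partial_y$ control), which you do not invoke. Likewise the identification $\psi_\infty(\cdot,0)=\mu\phi_\infty$ is not obtained by ``passing to the limit in the Robin condition'': the bound $\int_{\Omega_L}(\partial_y\psi_n)^2\to 0$ gives no control on the boundary trace $\partial_y\psi_n(\cdot,0)$, and the trace identity $\psi_n(\cdot,0)=-\phi_n''+c_n\phi_n'+\mu\phi_n$ passes to the limit only distributionally (you have $\phi_n$ bounded in $W^{2,\infty}_{loc}$, hence convergence in $\mathcal C^{1}_{loc}$, not $\mathcal C^{2}_{loc}$) and yields $g=-\phi_\infty''+\mu\phi_\infty$, not $g=\mu\phi_\infty$, unless you separately prove $\phi_\infty''\equiv 0$. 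The approach might be repairable, but as written the quantitative mechanism that forces $\int_{-1}^1\partial_x\psi_n(\cdot,0)\to 0$ is missing and is not replaced by anything that yields the bound on the fixed interval $[-1,1]$.
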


Before giving the proof, we mention an easy but technical lemma that will be used :

\begin{lem}
\label{lemfourier}
 If $k\in L^1$, $\hat k\in\mathcal C^\infty \cap L^2$ and $h\in L^\infty$ then the formula $$\mathcal F^{-1}(\hat k \hat h) = k*h$$ makes sense and holds.
\end{lem}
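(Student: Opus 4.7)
The strategy is to reduce to the classical $L^1$-convolution theorem by a smooth truncation of $h$, then pass to the limit in $\mathcal S'(\mathbb R)$. First observe that $(k*h)(x) := \int_{\mathbb R} k(y)h(x-y)\,dy$ is absolutely convergent for every $x$, with $\|k*h\|_\infty \leq \|k\|_1\|h\|_\infty$, and defines a uniformly continuous function; in particular $k*h \in L^\infty \subset \mathcal S'$ and $\widehat{k*h}$ is a well-defined tempered distribution, which is what we want to identify with $\hat k \hat h$.

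Fix a cutoff $\chi \in C_c^\infty(\mathbb R)$ with $\chi \equiv 1$ near the origin and set $h_n(x) := \chi(x/n)h(x)$, so that $h_n \in L^1 \cap L^\infty$ with $\|h_n\|_\infty \leq \|h\|_\infty$. Since $k$ and $h_n$ are both in $L^1$, the classical convolution theorem yields $\widehat{k*h_n} = \hat k \hat h_n$ pointwise, as bounded continuous functions. Dominated convergence gives $(k*h_n)(x) \to (k*h)(x)$ for every $x$ with uniform $L^\infty$ bound $\|k\|_1\|h\|_\infty$, hence $k*h_n \to k*h$ in $\mathcal S'$; by continuity of $\mathcal F$ on $\mathcal S'$, also $\widehat{k*h_n} \to \widehat{k*h}$ in $\mathcal S'$. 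The same truncation argument gives $h_n \to h$ and then $\hat h_n \to \hat h$ in $\mathcal S'$.

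It remains to give a meaning to $\hat k \hat h$ and to see that $\hat k \hat h_n \to \hat k \hat h$. For any $\phi \in \mathcal S$, the product $\hat k \phi$ belongs to $L^1 \cap L^2$, using $\hat k \in L^2 \cap L^\infty$ (recall that $k \in L^1$ forces $\hat k \in L^\infty$) and $\phi \in \mathcal S$; moreover the convolution formula gives $\mathcal F(\hat k \phi) = \check k * \hat\phi$ with $\check k(x) := k(-x) \in L^1$ and $\hat\phi \in L^1$, so $\mathcal F(\hat k \phi) \in L^1$. One may therefore define
\[
\langle \hat k \hat h, \phi \rangle := \int_{\mathbb R} h(x)\,\mathcal F(\hat k\phi)(x)\,dx,
\]
a continuous linear form on $\mathcal S$ that agrees with the usual pointwise product whenever $\hat h$ happens to be a function. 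One further application of dominated convergence, $\int h_n \mathcal F(\hat k\phi) \to \int h\,\mathcal F(\hat k\phi)$, gives $\langle \hat k\hat h_n, \phi\rangle \to \langle \hat k\hat h, \phi\rangle$, and combined with the convergence of the previous paragraph this yields $\widehat{k*h} = \hat k\hat h$ in $\mathcal S'$, equivalent to the claimed identity. The only real difficulty is precisely this interpretation of the product $\hat k \hat h$: $\hat h$ is a priori only a tempered distribution and $\hat k$ is not obviously a multiplier on $\mathcal S'$, and the hypothesis $\hat k \in L^2$ is exactly what makes the duality/Plancherel argument above go through.
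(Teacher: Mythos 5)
Your argument is correct, and it is a genuinely different route from the one the paper sketches. The paper's proof is a two-line indication: since $\hat k$ is smooth, the product $\hat k\hat h$ is declared to make sense as a distribution, and the identity is left to the reader ``using the classical properties of the Fourier transform on $L^2$ and the Fubini--Tonelli theorem'' --- i.e.\ a direct duality computation of the type $\langle \mathcal F^{-1}(\hat k\hat h),\phi\rangle=\langle h,\mathcal F(\hat k\,\mathcal F^{-1}\phi)\rangle=\int (k*h)\phi$, where Plancherel and Fubini identify $\mathcal F(\hat k\,\mathcal F^{-1}\phi)$ with a convolution against $k$. You instead truncate $h$ to $h_n=\chi(\cdot/n)h\in L^1\cap L^\infty$, invoke the classical $L^1$-convolution theorem for $k*h_n$, and pass to the limit in $\mathcal S'$ on both sides, interpreting $\hat k\hat h$ through the pairing $\phi\mapsto\int h\,\mathcal F(\hat k\phi)$, which is legitimate because $\hat k\phi\in L^1$ (Cauchy--Schwarz with $\hat k\in L^2$) and $\mathcal F(\hat k\phi)=\check k*\hat\phi\in L^1$. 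Two remarks on the comparison. First, your duality definition of the product buys something: it never uses the smoothness of $\hat k$, only $k\in L^1$ and $\hat k\in L^2\cap L^\infty$, whereas the paper's ``smooth times distribution'' shortcut implicitly needs $\hat k$ to be a moderate-growth multiplier ($\mathcal O_M$), which the stated hypotheses do not literally guarantee (though they hold for the kernels $\hat K_n(\xi)=(\xi^2-c_ni\xi+\mu)^{-1}$ actually used). Second, your step ``agrees with the usual pointwise product whenever $\hat h$ is a function'' is exactly where the identity $\int\hat h_n(\hat k\phi)=\int h_n\,\mathcal F(\hat k\phi)$ (the multiplication formula for two $L^1$ functions, i.e.\ Fubini--Tonelli) is needed to chain $\widehat{k*h_n}=\hat k\hat h_n$ with your pairing; it is a one-line verification, so this is a gloss rather than a gap, and apart from the usual bookkeeping of $2\pi$ normalisation constants (which the paper is equally casual about) the proof is complete.
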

\begin{proof}
Since $\hat k$ is a smooth function, the product distribution $\hat k \hat h$ makes sense and we can compute its inverse Fourier transform : we leave it to the reader to check the result using the classical properties of the Fourier transform on $L^2$ and the Fubini-Tonelli theorem.
\end{proof}

We now turn to the proof of lemma \ref{lemdelta}.
\begin{proof}
 We know that $\int_{\mathbb R} \phi_n'\partial_x\psi_n(\cdot,0) \to 0$. Note that $$-\phi_n'' + c_n\phi_n' +  \mu\phi_n = \psi_n(\cdot,0)$$ and so by lemma \ref{lemfourier} and the fact that $\xi^2 - c_ni\xi + \mu$ has no real roots, we have $\phi_n = K_n*\psi_n(\cdot,0)$ where $\hat{K_n}(\xi) = \frac{1}{\xi^2 - c_ni\xi + \mu}$, i.e. $$K_n(x) = \sqrt{\frac{2\pi}{c_n^2 + 4\mu}} e^{  -\frac{1}{2}\left(\sqrt{c_n^2+4\mu}-c_n\right)x }\left(e^{\sqrt{c_n^2+4\mu}x}H(-x) + H(x)\right)$$ This is a sequence of positive functions, uniformly bounded from below by a positive constant on any compact subset of $\mathbb R$.

\begin{figure}[h!]
\vspace{-190pt}
 \centering
 \includegraphics[scale=0.7,keepaspectratio=true]{./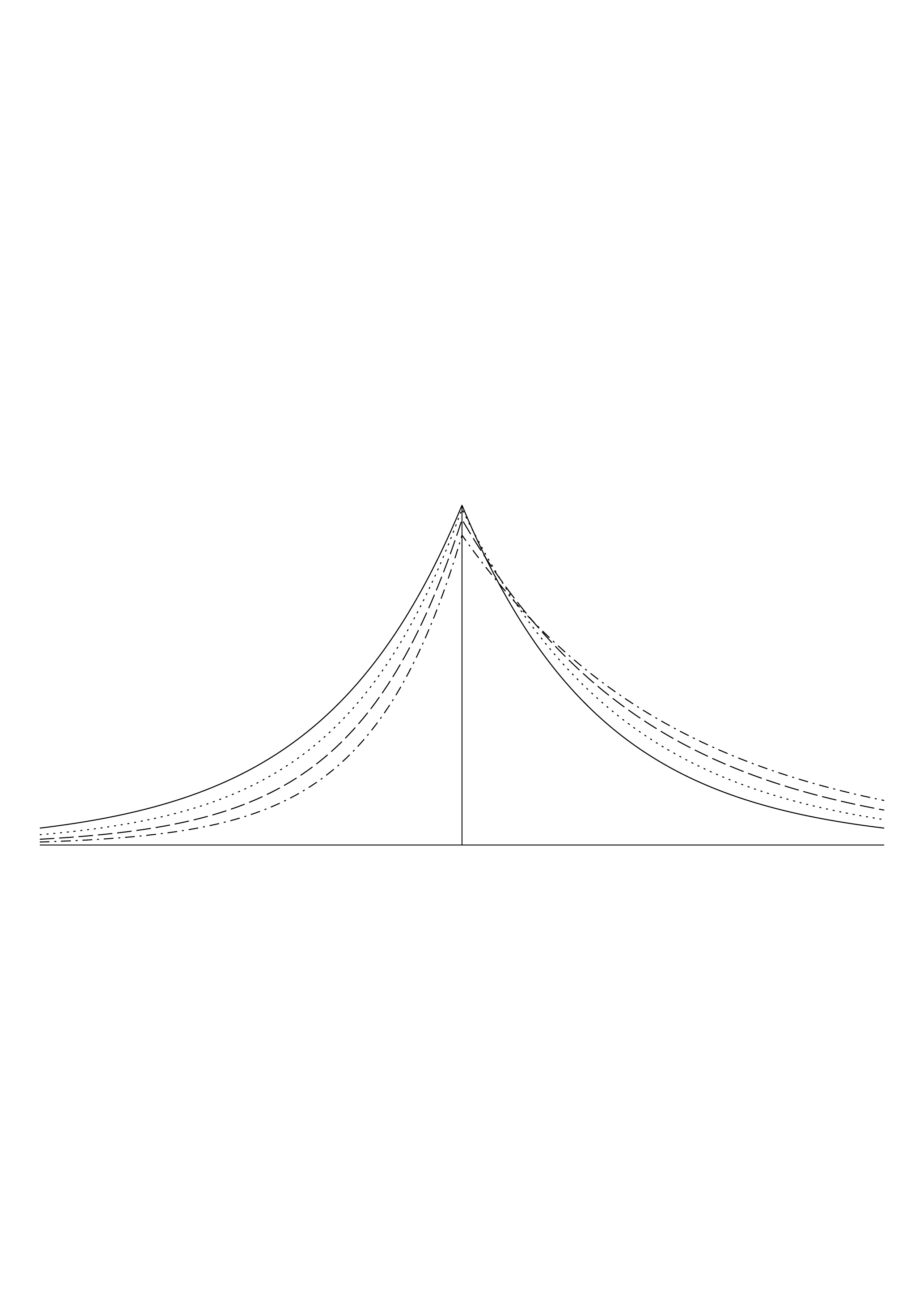}
  \rput(6.7,7.5){$x$}
  \rput(0,7.5){$0$}
  \rput(-6,8.8){$\sqrt{\frac{\pi}{2\mu}}e^{-\mu|x|}$}

\vspace{-210pt}
 \caption{Graphs of $K_n$ for $\mu=1$, $c_n \in \{0, 0.3, 0.6, 0.9\}$}
 \label{seqkn}
\vspace{10pt}
\end{figure}

Now for $-1 \leq x \leq 1$ and since $\partial_x\psi_n \geq 0$ we have $$|\psi_n(x,0) - \theta_1| \leq \int_{-1}^1 \partial_x\psi_n(\cdot,0)$$
But since $K_n, \partial_x\psi_n \geq 0$ and since $K_n > \alpha_0 > 0$ on $[-2,2]$ :

\begin{equation*}
\begin{split}
  \int_{\mathbb R} (\partial_x\psi_n) \phi_n' \geq \int_{\mathbb R} (K_n*\partial_x\psi_n)\partial_x\psi_n \geq & \int_{-1}^1 \left(\int_{\mathbb R} K_n(x-t)\partial_x\psi_n(t,0) dt\right) \partial_x\psi_n(x,0) dx \\ \geq & \int_{-1}^1 \left(\int_{-1}^1 K_n(x-t)\partial_x\psi_n(t,0) dt\right) \partial_x\psi_n(x,0) dx \\ >&\ \alpha_0 \left(\int_{-1}^1 \partial_x\psi_n(\cdot,0)\right)^2
\end{split}
\end{equation*}
Thus $$\int_{-1}^1 \partial_x\psi_n(\cdot,0) \leq \left(\frac{1}{\alpha_0}\int_{\mathbb R} \left(\partial_x\psi_n\right) \phi_n'\right)^{1/2} \to 0$$ and for $n$ large enough this quantity is less than $\delta \theta_1/2$.

\end{proof}

\begin{lem}
 \label{lemborel}
 Fix $0 < \delta < \frac{2\sqrt{2L}}{\theta_1}$. There exists $N' > 0$ such that for all $n > N'$, there exists a borelian $J_n$ of $[-1,1]$ with measure $\geq 1$ such that for all  $x\in J_n$ : $$\left(\int_{-L}^0 \partial_y\psi_n(x,s)^2 ds\right)^{1/2} \leq \frac{\delta\theta_1}{2\sqrt{L}}$$
\end{lem}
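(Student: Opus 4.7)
The plan is to recognise this as a Chebyshev (Markov) inequality estimate: the global $L^2$-smallness of $\partial_y \psi_n$ coming from \eqref{ipph1} forces $\partial_y \psi_n(x,\cdot)$ to be small in $L^2(-L,0)$ for most $x \in [-1,1]$.

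First I would read off from \eqref{ipph1} that, since every term on the left-hand side is non-negative and the right-hand side tends to $0$ (this was already observed just before Lemma \ref{lemdelta}), we have in particular
\begin{equation*}
\int_{\Omega_L}\bigl(\partial_y \psi_n\bigr)^2 \;\xrightarrow[n\to\infty]{}\; 0.
\end{equation*}
Restricting the integral in $x$ to $[-1,1]$ and applying Fubini,
\begin{equation*}
I_n := \int_{-1}^{1} g_n(x)\,dx \;\xrightarrow[n\to\infty]{}\; 0, \qquad \text{where } g_n(x) := \int_{-L}^{0}\bigl(\partial_y \psi_n(x,s)\bigr)^2 ds.
\end{equation*}

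Next I would apply Chebyshev's inequality to $g_n$ on $[-1,1]$ with threshold $\alpha^2 = \dfrac{\delta^2 \theta_1^2}{4L}$. Letting $A_n := \{x \in [-1,1] : g_n(x) > \alpha^2\}$, one has
\begin{equation*}
|A_n| \le \frac{1}{\alpha^2}\int_{-1}^{1} g_n(x)\,dx = \frac{4L}{\delta^2 \theta_1^2}\, I_n \;\xrightarrow[n\to\infty]{}\; 0.
\end{equation*}
In particular, for $n$ large enough (say $n > N'$), $|A_n| < 1$, so $J_n := [-1,1] \setminus A_n$ is Borel with $|J_n| \ge 1$, and for every $x \in J_n$ we have $g_n(x) \le \alpha^2$, which is exactly the desired inequality
\begin{equation*}
\left(\int_{-L}^{0}\bigl(\partial_y \psi_n(x,s)\bigr)^2 ds\right)^{1/2} \le \frac{\delta\,\theta_1}{2\sqrt{L}}.
\end{equation*}

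There is essentially no obstacle here: the only thing to notice is that the bound $\delta < \frac{2\sqrt{2L}}{\theta_1}$ imposed in the statement plays no role in the present Chebyshev estimate (it will presumably be used downstream, to combine the conclusions of Lemmas \ref{lemdelta} and \ref{lemborel} with the trace control of $\psi_n(x,\cdot)$ and eventually contradict the normalisation $\psi_n(0,0)=\theta_1>\theta$ via $f\equiv 0$ on $[0,\theta]$). The argument above only requires $\delta > 0$.
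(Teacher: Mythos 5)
Your proof is correct and follows essentially the same route as the paper: a Markov/Chebyshev inequality applied to $h(x)=\int_{-L}^0\partial_y\psi_n(x,s)^2\,ds$, whose integral over $[-1,1]$ tends to $0$ by \eqref{ipph1}. The only (harmless) difference is bookkeeping: the paper ties the threshold to the smallness parameter (taking $\varepsilon=\delta^4\theta_1^4/16L^2$ and bounding the bad set by $\tfrac12\sqrt{\varepsilon}$), so it needs the hypothesis $\delta<\frac{2\sqrt{2L}}{\theta_1}$ precisely to guarantee $|J_n|\ge 1$, whereas you fix the threshold and let the integral vanish, which makes that hypothesis superfluous for this lemma, exactly as you observe.
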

\begin{proof}
 This result is based on a Markov-type inequality. Call $$h(x):= \int_{-L}^0 \partial_y\psi_n(x,s)^2ds$$
 We now use that $$0 \leq \int_{-1}^1 h(x) dx \leq \int_{\mathbb R} \left( \int_{-L}^0 \partial_y\psi_n(x,s)^2ds\right) dx \to 0$$ Thus for all $\varepsilon > 0$, there exists $N' > 0$ such that for all $n > N'$, $\int_{-1}^1 h(x) dx \leq \varepsilon$. Using then that $h(x) \geq \sqrt{\varepsilon}\mathbf{1}_{h > \sqrt{\varepsilon}}$ we get that

$$ |\{x\in[-1,1] \mid h(x) > \sqrt{\varepsilon}\}| < \frac{1}{2}\sqrt{\varepsilon}$$
where $|A|$ denotes the Lebesgue measure of $A$. We get the result by choosing $$J_n = \{x\in[-1,1] \mid h(x) \leq \sqrt{\varepsilon}\}$$ and $\varepsilon = \delta^4\theta_1^4/16L^2$. The fact that $|J_n| \geq 1$ directly comes from the upper bound assumed on $\delta$.
\end{proof}

We can now finish the proof. Indeed, for $n > N,N'$ fixed and for $x\in J_n$ we have 
$$ \psi_n(x,0) - \int_{-L}^0 |\partial_y(x,s)ds| \leq \psi_n(x,y) \leq \psi_n(x,0) + \int_{-L}^0 |\partial_y(x,s)ds|$$
Using the Cauchy-Schwarz inequality and Lemmas \ref{lemdelta} and \ref{lemborel}, we get on $J_n\times[-L,0]$ 
$$ (1-\delta)\theta_1 \leq \psi_n(x,y) \leq (1+\delta)\theta_1 $$
And so $$\left(1+\frac{L}{\mu}\right)c_n = \int_{\Omega_L} f(\psi_n) \geq \int_{J_n\times[-L,0]} f(\psi_n) \geq L \inf_{J_n\times[-L,0]} f(\psi_n)$$ Choosing now $\theta_1$ and $\delta$ such that $f([(1-\delta)\theta_1,(1+\delta)\theta_1  ]) > C > 0$ we get a contradiction with the assumption $c_n \to 0$.

\section{The equivalent $c(D) \sim c_\infty\sqrt{D}$}
\label{equiv}
We know that every sequence $c(D_n)$ associated to a sequence $D_n \to +\infty$ is trapped between two positive constants.  Now we just have to show the uniqueness of the limit point. We divide the proof in three steps.
\begin{itemize}
 \item Compactness : we prove that any $(\phi_n,\psi_n)$ associated to $D_n \to \infty$ and $c_n \to c > 0$ is bounded in $H^3_{loc}$. This uses integral identities.
 \item Treating the $x$ variable as a time variable, we extract from such a family a $(c,\phi,\psi)$ that solves \eqref{rescaled} with $D = +\infty$.
 \item We show uniqueness of $c$ for such a problem using a parabolic version of the arguments in Proposition \ref{cmax}.
\end{itemize}

But first, let us give an easy but technical lemma that will be used in the next computations.

\begin{lem}{Gagliardo-Nirenberg and Ladyzhenskaya type inequalities in $\Omega_L$} 
\label{GN}
\begin{itemize}
 \item For all $\alpha\in ]\frac{1}{3},1[$, there exists $C_{GN} > 0$ s.t. for all $u\in H^1(\Omega_L)$ $$|u|_{L^3(\Omega_L)} \leq C_{GN}|u|_{L^2(\Omega_L)}^{1-\alpha} |u|_{H^1(\Omega_L)}^\alpha$$
 \item For all $\beta\in ]\frac{1}{2},1[$, there exists $C_L > 0$ s.t. for all $u\in H^1(\Omega_L)$ $$|u|_{L^4(\Omega_L)} \leq C_L|u|_{L^2(\Omega_L)}^{1-\beta} |u|_{H^1(\Omega_L)}^\beta$$
\end{itemize}
\end{lem}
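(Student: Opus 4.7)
The plan is to deduce both inequalities from the classical Gagliardo--Nirenberg inequalities on $\mathbb R^2$, absorbing the ``loss'' in the open intervals $(1/3,1)$ and $(1/2,1)$ by a trivial interpolation. First, since the strip $\Omega_L$ has smooth boundary, I would invoke a bounded linear extension operator $E : H^1(\Omega_L) \to H^1(\mathbb R^2)$ (one can simply reflect across $y=0$ and $y=-L$, extend periodically in $y$, and multiply by a smooth $y$-cutoff), producing a constant $K = K(L)$ such that
\[
|Eu|_{L^2(\mathbb R^2)} \leq K\,|u|_{L^2(\Omega_L)}, \qquad |Eu|_{H^1(\mathbb R^2)} \leq K\,|u|_{H^1(\Omega_L)}.
\]

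Next, I would apply the standard two-dimensional Gagliardo--Nirenberg inequality
\[
|w|_{L^p(\mathbb R^2)} \leq C_p\,|w|_{L^2(\mathbb R^2)}^{2/p}\,|\nabla w|_{L^2(\mathbb R^2)}^{1 - 2/p}, \qquad p \in [2,\infty),
\]
with $w = Eu$ and $p = 3$ (resp.\ $p = 4$). Combining with $|\nabla Eu|_{L^2(\mathbb R^2)} \leq |Eu|_{H^1(\mathbb R^2)}$, the trivial bound $|u|_{L^p(\Omega_L)} \leq |Eu|_{L^p(\mathbb R^2)}$ and the extension estimate above, this yields the endpoint statements
\[
|u|_{L^3(\Omega_L)} \leq C'\,|u|_{L^2(\Omega_L)}^{2/3}\,|u|_{H^1(\Omega_L)}^{1/3}, \qquad |u|_{L^4(\Omega_L)} \leq C''\,|u|_{L^2(\Omega_L)}^{1/2}\,|u|_{H^1(\Omega_L)}^{1/2},
\]
which correspond exactly to the critical exponents $\alpha = 1/3$ and $\beta = 1/2$.

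Finally, to move from these endpoints to any $\alpha \in (1/3, 1)$ (and similarly any $\beta \in (1/2,1)$), I would use the trivial comparison $|u|_{L^2(\Omega_L)} \leq |u|_{H^1(\Omega_L)}$ to redistribute exponents:
\[
|u|_{L^2(\Omega_L)}^{2/3} = |u|_{L^2(\Omega_L)}^{1-\alpha}\,|u|_{L^2(\Omega_L)}^{\alpha - 1/3} \leq |u|_{L^2(\Omega_L)}^{1-\alpha}\,|u|_{H^1(\Omega_L)}^{\alpha - 1/3},
\]
which inserted into the endpoint bound gives $|u|_{L^3(\Omega_L)} \leq C_{GN}\,|u|_{L^2(\Omega_L)}^{1-\alpha}\,|u|_{H^1(\Omega_L)}^\alpha$, and the same manipulation produces the Ladyzhenskaya-type estimate for any $\beta \in (1/2,1)$. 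There is really no substantive obstacle here; the only mild subtlety is that the intervals for $\alpha$ and $\beta$ are open precisely because the right-hand side is phrased in terms of $|u|_{H^1}$ instead of the sharper $|\nabla u|_{L^2}$, a convenient choice in view of the energy identities and the $H^3_{loc}$ compactness argument of Section \ref{equiv}.
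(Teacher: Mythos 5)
Your proof is correct, but it follows a genuinely different route from the paper. You extend to the whole plane and invoke the sharp two-dimensional Gagliardo--Nirenberg/Ladyzhenskaya inequalities at the endpoint exponents $\alpha=1/3$, $\beta=1/2$, then relax to the open intervals via the trivial bound $|u|_{L^2}\leq |u|_{H^1}$. The paper deliberately avoids the endpoint: it only uses the H\"older interpolation $\frac1q=\frac{1-\alpha}{2}+\frac{\alpha}{s}$ with $q=3$ (resp.\ $4$), $r=2$ and $s$ finite but large, controlling the $L^s$ factor by the Sobolev embedding $H^1(\Omega_L)\hookrightarrow L^s(\Omega_L)$; letting $s$ range over $(3,\infty)$ (resp.\ $(4,\infty)$) sweeps out exactly $\alpha\in(1/3,1)$ (resp.\ $\beta\in(1/2,1)$). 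What your approach buys is a strictly stronger statement — the endpoint versions $|u|_{L^3}\leq C|u|_{L^2}^{2/3}|u|_{H^1}^{1/3}$ and $|u|_{L^4}\leq C|u|_{L^2}^{1/2}|u|_{H^1}^{1/2}$ on $\Omega_L$ — at the cost of the extension operator and the sharp planar inequality; the paper's argument buys brevity and elementarity (H\"older plus Sobolev embedding only, the latter still resting on the strip being a nice extension-type domain). One small correction to your closing remark: the intervals in the lemma are open not because the $H^1$ norm on the right makes the endpoint fail — your own argument shows the endpoint is true with $|u|_{H^1}$ — but simply because the paper's quicker interpolation proof does not reach it and the non-endpoint versions suffice for the $H^3_{loc}$ estimates.
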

\begin{proof}
 The inequalities above with $\alpha = \frac{1}{3}$ and $\beta = \frac{1}{2}$ are resp. Gagliardo-Nirenberg and Ladyzhenskaya inequalities. We can prove that these are still valid in $\Omega_L$ by re-doing the computations of Nirenberg and using trace inequalities, but the inequalities above will suffice for us and are easier to prove. 
 
 For this, we just use the Hölder interpolation inequality : if $p,q,r \geq 1$ and $\alpha\in[0,1]$  such that $\frac{1}{q} = \frac{1-\alpha}{r} + \frac{\alpha}{s}$, then $|u|_{L^q} \leq |u|_{L^r}^{1-\alpha}|u|_{L^s}^\alpha$. We apply this with $q=3$ resp. $4$, $r=2$, $s \to +\infty$, and control the $|u|_{L^s}^\alpha$ part with the Sobolev embedding $|u|_{L^s}^\alpha \leq C|u|_{H^1}^\alpha$ for $s\geq 2$.
\end{proof}

We are now able to treat Step 1 :

\begin{lem}
\label{H3estimate}
Let $D_n \to \infty$, $(c_n,\phi_n,\psi_n)$ the sequence of solutions associated to $D_n$ and suppose $c_n \to c > 0$. Denote $\Omega_{L,M} := [-M,M]\times[-L,0]$. Then for every $M\in\mathbb N$, there exists $C_M > 0$ s.t. $|\psi_n|_{H^3(\Omega_{L,M})} \leq C_M$.
\end{lem}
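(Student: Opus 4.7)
The plan is to establish the local $H^3$ bound for $\psi_n$ by a chain of $L^2$ energy estimates with smooth $x$-cutoffs, beginning with the global bounds already furnished by the integral identity \eqref{ipph1} and iteratively gaining one derivative at a time. From Proposition \ref{tail} I have $0\le\psi_n\le 1$, hence $f(\psi_n)$ is bounded in $L^\infty(\Omega_L)$. The identity \eqref{ipph1}, combined with $c_n\to c>0$ and $\int_{\Omega_L}f(\psi_n)=(L+1/\mu)c_n\le C$, delivers $d\,|\partial_y\psi_n|_{L^2(\Omega_L)}^2+(d/D_n)\,|\partial_x\psi_n|_{L^2(\Omega_L)}^2\le C$. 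The second bound degenerates as $n\to\infty$ and must be replaced by a local estimate on $\partial_x\psi_n$, which I would obtain by testing the equation against $\chi(x)^2\partial_x\psi_n$, with $\chi\in C_c^\infty(\mathbb R)$ equal to $1$ on $[-M,M]$ and supported in $[-M-1,M+1]$. After integration by parts the leading contribution is $c_n\int\chi^2(\partial_x\psi_n)^2$; the cross-terms with $\chi\chi'$ are absorbed by the global bounds, while the boundary terms at $y=0$, involving $\phi_n$ through $d\partial_y\psi_n(\cdot,0)=\mu\phi_n-\psi_n(\cdot,0)$, are handled by a uniform $L^2(\mathbb R)$ bound on $\phi_n'$ obtained via the one-dimensional energy estimate (multiplying the $\phi_n$-equation by $\phi_n'$ and using $0\le\phi_n\le 1/\mu$ and $c_n\ge c/2$).

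With $\psi_n$ uniformly bounded in $H^1_{loc}$, I would bootstrap: differentiating the equation in $x$ yields an equation of the same structure for $w_n:=\partial_x\psi_n$, with RHS $f'(\psi_n)w_n$ and boundary condition $d\partial_y w_n(x,0)=\mu\phi_n'(x)-w_n(x,0)$, and the analogous $\chi^2 w_n$-energy estimate (using $|f'|_\infty<\infty$ and the previous bounds) gives $L^2_{loc}$ control on $\partial_{xx}\psi_n$ and $\partial_{xy}\psi_n$. The missing $\partial_{yy}\psi_n$ is read directly off the original equation, $d\partial_{yy}\psi_n=(d/D_n)\partial_{xx}\psi_n+c_n\partial_x\psi_n-f(\psi_n)$, giving the $H^2_{loc}$ bound. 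Iterating one more step --- differentiating again, running the cutoff energy estimate, and using differentiated equations to recover the remaining third-order $y$-derivatives ($\partial_{xyy}$ and $\partial_{yyy}$ from the differentiated equations, solving for the $y$-derivatives) --- produces the $H^3_{loc}$ bound.

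The main technical obstacle lies in this last iteration: the cubic nonlinear contributions that appear on the right-hand side of the new energy identity, typically of the form $\int\chi^2 f''(\psi_n)(\partial_x\psi_n)^2\partial_{xx}\psi_n$, cannot be handled by $L^\infty$-estimates alone, and this is precisely where Lemma \ref{GN} enters. Using the Gagliardo-Nirenberg and Ladyzhenskaya inequalities one bounds $|\partial_x\psi_n|_{L^3_{loc}}$, $|\partial_x\psi_n|_{L^4_{loc}}$, and $|\partial_{xx}\psi_n|_{L^3_{loc}}$ by interpolation between their $L^2$ norms (already controlled at this stage) and their $H^1$ norms (whose principal parts are exactly the quadratic forms produced on the left-hand side of the new energy identity); Young's inequality then absorbs the bad part. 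A secondary subtlety is the singular perturbation $d/D_n\to 0$: any argument using uniform ellipticity of $(d/D_n)\partial_{xx}+d\partial_{yy}$ would produce constants blowing up, and the energy approach is essential precisely because it treats $(d/D_n)\partial_{xx}$ as a favorable, non-negative contribution on the left-hand side rather than as a principal operator. Throughout, the regularity of $\phi_n$ must be updated in parallel with that of $\psi_n$ through the ODE $-\phi_n''+c_n\phi_n'=\psi_n(\cdot,0)-\mu\phi_n$, using the successive trace regularity of $\psi_n(\cdot,0)$ on $\mathbb R$ that becomes available after each step.
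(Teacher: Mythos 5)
Your overall strategy is the same as the paper's: an energy cascade on $\psi_n$, $w_n:=\partial_x\psi_n$, $\rho_n:=\partial_{xx}\psi_n$, treating $(d/D_n)\partial_{xx}$ as a favorable non-negative term, recovering the $y$-derivatives from the (differentiated) equation, invoking Lemma \ref{GN} plus Young absorption for the cubic terms, and upgrading $\phi_n$ in parallel through the ODE; the localization by cutoffs $\chi$ (and the $L^2(\mathbb R)$ bound on $\phi_n'$ in place of the paper's $W^{2,\infty}$ bound obtained by Fourier transform) is a legitimate variant of the paper's global-in-$x$ identities, which instead exploit $\partial_x\psi_n>0$ and $0\le\psi_n\le1$ to make the integrals over the full strip finite. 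However, two points in your sketch are not right as written. First, the claim that ``the analogous $\chi^2 w_n$-energy estimate gives $L^2_{loc}$ control on $\partial_{xx}\psi_n$ and $\partial_{xy}\psi_n$'' fails for $\partial_{xx}\psi_n$: testing the $w_n$-equation against $\chi^2 w_n$ produces $d\int\chi^2(\partial_y w_n)^2$ and only the degenerate term $\frac{d}{D_n}\int\chi^2(\partial_x w_n)^2$, while $c_n\int\chi^2 w_n\partial_x w_n$ integrates to lower order; uniform control of $\partial_x w_n$ must come from the second test against $\chi^2\partial_x w_n$, whose good term is $c_n\int\chi^2(\partial_x w_n)^2$ with $c_n\ge c_{min}>0$ (this is exactly the structure of the paper's step b), and likewise at the next level with $\chi^2\partial_x\rho_n$.

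Second, and more importantly, the genuinely delicate point of the last iteration is not the cubic nonlinearity $\int\chi^2 f''(\psi_n)w_n^2\rho_n$ (at that stage $w_n$ is already bounded in $H^1_{loc}$, so Ladyzhenskaya bounds it outright), but the $y=0$ boundary term produced by the indispensable $\chi^2\partial_x\rho_n$ test, which pairs $\phi_n''$ with the trace of $\partial_{xxx}\psi_n$. Your proposal never confronts it, whereas in the paper this is the step requiring the ``fractional integration by parts'' via Plancherel together with the elliptic bound of $\phi_n'$ in $H^{2+1/2}$ from $H^{1/2}$ trace data. In your cutoff framework the gap is fillable more cheaply — integrate by parts once in $x$ to transfer the derivative onto $\chi^2\phi_n''$, and note that $\phi_n'''=c_n\phi_n''+\mu\phi_n'-\partial_x\psi_n(\cdot,0)$ is bounded in $L^2_{loc}$ because the trace of $w_n$ on $y=0$ is controlled once $w_n,\partial_y w_n\in L^2$ — but as it stands this key step is missing, and your vague remark about updating the regularity of $\phi_n$ ``in parallel'' does not identify either mechanism. (Minor: the sign of $(d/D_n)\partial_{xx}\psi_n$ in your formula for $d\partial_{yy}\psi_n$ is off, harmlessly.)
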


\begin{proof}

\begin{enumerate}[a)]
\item The $H^1$ bound. 

First, since $0 \leq \psi_n \leq 1$, we know that $(\psi_n)$ is bounded in $L^2(\Omega_{L,M})$. Then we start from \eqref{ipph1} : since $(c_n)$ is bounded and because the right-hand side of \eqref{ipph1} is bounded, we have that $(\partial_y\psi_n)$ is bounded in  $L^2(\Omega_L)$. \\ Then multiply the equation inside $\Omega_L$ in \eqref{rescaled}  by $\partial_x \psi_n$ and in a similar fashion as \eqref{ipph1}, integrate by parts on $\Omega_{L,M}$. Boundary terms along the $y$ axis decay thanks to elliptic estimates. Using $\partial_x\psi_n > 0$ and $|\phi_n|_{W^{2,\infty}(\mathbb R)} \leq C$ (use Fourier transform of the variation of constants) we get the sum of following terms

\begin{itemize}
\item $\displaystyle-\frac{d}{D_n}\int_{\Omega_L}\partial_{xx}\psi_n \partial_x\psi_n = 0$
\item $\displaystyle\int_{\Omega_L} -d\partial_{yy}\psi_n\partial_x\psi_n = \cancel{\int_{\Omega_L}\partial_x\left( \frac{1}{2}d\partial_y\psi_n^2\right) }+ \int_{\mathbb R}(-\phi_n'' + c_n\phi_n')\partial_x\psi_n$ so $$\left|\int_{\Omega_L} -d\partial_{yy}\psi_n\partial_x\psi_n\right| \leq C$$
\item $\displaystyle\left|\int_{\Omega_L}f(\psi_n)\partial_x\psi_n\right| \leq \sup f\times L$
\item Finally, $\displaystyle c_n\int_{\Omega_L}\left(\partial_x\psi_n\right)^2$, which we want to bound.
\end{itemize}
so that in the end $$\displaystyle\left|\partial_x\psi_n\right|_{L^2(\Omega_L)}^2 \leq \frac{L\sup f + C}{c_{min}} =: C_1$$

\item The $H^2$ bound

We apply the same process, on the equation satisfied by $(z_n,w_n):=(\phi_n',\partial_x\psi_n)$. The linear structure is the same, but this time we do not have positivity of the first $x$ derivative any more. Multiplying by $w_n:=\partial_x\psi_n  > 0$ the equation satisfied by $w_n$ and integrating gives rise to the following terms :

\begin{itemize}
\item $\displaystyle\int_{\Omega_L} \left(-\frac{d}{D_n} \partial_{xx}w_n\right)  w_n = \frac{d}{D_n}\int_{\Omega_L}\left(\partial_x w_n\right)^2 \geq 0$
\item $\displaystyle c_n\int_{\Omega_L} \left(\partial_x w_n\right) w_n = 0$ thanks to elliptic estimates.
\item $\displaystyle\left|\int_{\Omega_L} f'(\psi_n)w_n^2\right| \leq |f'|_\infty C_1$ by the result above.
\item $\displaystyle\int_{\Omega_L}\left(-d\partial_{yy}w_n\right) w_n = \int_{\Omega_L} d\left(\partial_y w_n\right)^2 + \int_{\mathbb R} (w_n - \mu z_n)w_n$ where $$\int_{\mathbb R} (w_n - \mu z_n)w_n \geq \int_{\mathbb R} w_n^2 - \mu C \geq -\mu C$$
\end{itemize}
so that in the end $$\left|\partial_y w_n\right|_{L^2(\Omega_L)}^2 \leq \frac{|f'|_\infty C_1 + \mu C}{d} =: C_2$$
We now multiply by $\partial_x w_n$ the equation satisfied by $w_n$. Integration yields the sum of the following terms :
\begin{itemize}
\item $\displaystyle\int_{\Omega_L} \left(-\frac{d}{D_n} \partial_{xx}w_n\right) \partial_x w_n = 0$
\item $\displaystyle c_n\int_{\Omega_L} \left(\partial_x w_n\right)^2$  which we want to bound.
\item $\displaystyle\int_{\Omega_L} (-d\partial_{yy}w_n) \partial_x w_n = \int_{\mathbb R} (w_n- \mu z_n)\partial_x w_n = \mu \int_{\mathbb R} z_n' w_n$ so that  $$\left|\int_{\Omega_L} (-d\partial_{yy}w_n) \partial_x w_n \right| \leq \mu C$$
\item $\displaystyle\int_{\Omega_L} f'(\psi_n)w_n \partial_x w_n = -\frac{1}{2}\int_{\Omega_L} f''(\psi_n) w_n^3$ so that 

$$\left|\int_{\Omega_L} f'(\psi_n)w_n \partial_x w_n \right| \leq \frac{|f''|_\infty}{2} |w_n|_{L^3}^3 \leq  \frac{|f''|_\infty}{2} C_{GN}^3\sqrt{C_1}^{2-\varepsilon}|w_n|_{H^1(\Omega_L)}^{1+\varepsilon}  =: C_3 |w_n|_{H^1}^{1+\varepsilon}  $$

for a small $\varepsilon > 0$ of our choice thanks to the Gagliardo-Nirenberg type inequality of Lemma \ref{GN}.
\end{itemize}
Finally, we end up with the inequality $$|w_n|_{H^1(\Omega_L)}^2 \leq C_1 + C_2 + \mu C + C_3|w_n|_{H^1(\Omega_L)}^{1+\varepsilon}$$ which yields the bound $$|w_n|_{H^1}^2 \leq C_4$$ Then we use the original equation to assert that $\partial_{yy}\psi_n$ is bounded in $L^2(\Omega_L)$, so that in the end the lemma is true with $H^2_{loc}$.

\item The $H^3$ bound

For the $H^3$ estimate, we iterate one last time with the equation satisfied by $(\tau_n,\rho_n):=(\phi'',\partial_{xx}\psi_n)$. Multiplying the equation by $\rho_n$ and integrating gives as before a non-negative and a zero term, and the boundary integral as well as the right-hand side have to be studied more carefully :

\begin{itemize}
\item $\int_{\Omega_L}  \left(-d\partial_{yy}\rho_n\right) \rho_n = \int_{\Omega_L} d\left(\partial_y \rho_n\right)^2 + \int_{\mathbb R} \rho_n^2 - \mu  \int_\mathbb R\tau_n\rho_n$. But $$\left|\int_{\mathbb R} \rho_n\tau_n\right| = \left|\int_{\mathbb R} \tau_n' \partial_x\psi_n\right| \leq |\tau_n'|_{L^2} |\partial_x\psi_n(\cdot,0)|_{L^2} \leq C_5$$ by elliptic estimates, since the $\phi_n'$ satisfy an uniformly elliptic equation on $\mathbb R$ with uniformly bounded coefficients and with data $\partial_x\psi_n(\cdot,0)$ bounded in $H^{1/2}$, so $(\phi_n)'$ is bounded in $H^{2+1/2}$, which gives $(\tau_n')$ bounded in $H^{1/2}$ so in $L^2$ (this can be seen easily through the Fourier transform). The second factor is bounded by the above result and the continuity of the trace operator on $\Omega_L$.
\item The right-hand side is $f''(\psi_n)\left(\partial_x\psi_n\right)^2 \rho_n + f'(\psi_n) \rho_n^2$. For the first term, we use that
\begin{alignat*}{1}
\left|\int_{\Omega_L}  (\partial_x\psi_n)^2\rho_n\right| &\leq \left(\int_{\Omega_L}  \left(\partial_x\psi_n\right)^4\right)^{1/2}\left(\int_{\Omega_L}  \rho_n^2\right)^{1/2} \\ 
&\leq C_L^2|\partial_x \psi_n|_{L^2}^{1-\varepsilon}|\partial_x \psi_n|_{H^1}^{1+\varepsilon}|\rho_n|_{L^2} \\ 
&\leq C_L^2 \sqrt{C_1}^{1-\varepsilon}\sqrt{C_4}^{2+\varepsilon} =: C_6
\end{alignat*}

for some small $\varepsilon > 0$ of our choice thanks to the Ladyzhenskaya type inequality of Lemma \ref{GN}. 

For the second term we just have $$\left|\int_{\Omega_L}  f'(\psi_n)\rho_n^2\right| \leq |f'|_\infty C_4$$ so this procedure gives 
$$|\partial_y \rho_n|_{L^2(\Omega_L)}^2 \leq \frac{C_5 + C_6 + |f'|_\infty C_4}{d} =: C_7$$
\end{itemize}
Now multiplying by $\partial_x \rho_n$ and integrating gives a zero term, the $\int \left(\partial_x\rho_n\right)^2$ term we want to bound, and a boundary integral as well as a right hand side that are the following :

\begin{itemize}
 \item $\int_\mathbb R (\rho_n - \mu\tau_n)\partial_x\rho_n = -\mu\int_\mathbb R \tau_n\partial_x\rho_n = -\mu\int_\mathbb R \phi_n''\partial_{xxx}\psi_n$. We bound this term thanks to a ``fractional integration by parts'' : indeed, we need to transfer more than one derivative on $\phi$, but we do not control $\phi$ in $H^4$, only in $H^{3+1/2}$. For this, we use Plancherel's identity :
 \begin{alignat*}{1}
  & \left|\int_\mathbb R \phi_n''\partial_{xxx}\psi_n\right|  = \left|\int_\mathbb R -\xi^2 \hat\phi_n i\xi^3 \hat \psi_n\right| \\ & \leq \int_\mathbb R |\xi|^{2+1/2}|i\xi\hat\phi_n| |\xi|^{1/2}|i\xi\hat\psi_n| \\ &\leq \left(\int_\mathbb R |\xi|^{2(2+1/2)}|i\xi\hat\phi_n|^2\right)^{1/2}\left(\int_\mathbb R |\xi|^{2(1/2)}|i\xi\hat\psi_n|^2\right)^{1/2} \\ &\leq \left(\int_\mathbb R \left(1+|\xi|^2\right)^{2+1/2}|i\xi\hat\phi_n|^2\right)^{1/2}\left(\int_\mathbb R \left(1+|\xi|^2\right)^{1/2}|i\xi\hat\psi_n|^2\right)^{1/2} \\ &\leq |\phi_n'|_{H^{2+1/2}} |\partial_x\psi_n(\cdot,0)|_{H^{1/2}} \\ &\leq C C_{tr}\sqrt{C_4}
 \end{alignat*}
where $C_{tr}$ is a bound for the trace operator on $\Omega_L$.

  \item The right-hand side is $\int f''(\psi_n)\left(\partial_x\psi_n\right)^2 \partial_x \rho_n + f'(\psi_n) \rho_n \partial_x\rho_n$. The first term is controlled thanks to Cauchy-Schwarz and Ladyzhenskaya's inequality again~: 
  \begin{alignat*}{1}
  \left|\int_{\Omega_L}  f''(\psi_n)\left(\partial_x\psi_n\right)^2 \partial_x \rho_n \right| \leq |f''|_\infty \int_{\Omega_L} |2\rho_n \partial_x\psi_n \partial_{xx}\psi_n| & \leq 2|f''|_\infty |\rho_n|_{L^2} |\partial_x\psi_n \partial_{xx}\psi_n|_{L^2} \\ &\leq 2|f''|_\infty |\rho_n|_{L^2}|\partial_x\psi_n|_{L^4}|\partial_{xx}\psi_n|_{L^4} \\ &\leq
  2|f''|_\infty C_L^2 \sqrt{C_4}^{5/2-\varepsilon}|\rho_n|_{H^1(\Omega_L)}^{1/2 + \varepsilon} \\ &=: C_8|\rho_n|_{H^1(\Omega_L)}^{1/2 + \varepsilon}
  \end{alignat*}
by applying Ladyzhenskaya's inequality twice.

The last term gives $\int_{\Omega_L}  f'(\psi_n)\rho_n \partial_x\rho_n = - \int_{\Omega_L} \frac{1}{2}\rho_n^2f''(\psi_n)\partial_x\psi_n$ so by Cauchy-Schwarz~:
\begin{alignat*}{1}
\left|\int_{\Omega_L}  f'(\psi_n)\rho_n \partial_x\rho_n \right| &\leq \frac{|f''|_\infty}{2}\left(\int_{\Omega_L}  |\rho_n|^4\right)^{1/2}|\partial_x \psi_n|_{L^2(\Omega_L)} \\
&\leq \frac{|f''|_\infty}{2}|\rho_n|_{L^4}^2  \sqrt{C_1}\\
&\leq \frac{|f''|_\infty}{2}C_L^2 \sqrt{C_1} |\rho_n|_{L^2}^{1 - \varepsilon} |\rho_n|_{H^1}^{1 + \varepsilon} \\
&\leq \frac{|f''|_\infty}{2}C_L^2 \sqrt{C_1} \sqrt{C_4}^{1 - \varepsilon} |\rho_n|_{H^1}^{1 + \varepsilon} \\
& := C_9|\rho_n|_{H^1}^{1 + \varepsilon}
 \end{alignat*}
by Ladyzhenskaya's inequality again.

As before, in the end we get $$|\rho_n|_{H^1}^2 \leq C_4 + C_7 + CC_{tr}\sqrt{C_4} + C_8|\rho_n|_{H^1(\Omega_L)}^{1/2 + \varepsilon} + C_9|\rho_n|_{H^1}^{1 + \varepsilon}$$ which yields the boundedness of $|\rho_n|_{H^1}$. 

Finally, the terms $\partial_{xyy} \psi_n$ and $\partial_{yyy} \psi_n$ are bounded in $L^2(\Omega_{L})$ thanks to the equation : if we differentiate the original equation on $\psi_n$ in $y$ and then in $x$, the result is immediate.
\end{itemize}
\end{enumerate}
\end{proof}


We could go on again to $H^4$ by looking at the third derivatives, but the right-hand side would involve too many computations and interpolation inequalities. Instead, we stop here and use the following lemma.

\begin{lem}
  \label{regulparab}
 With the assumptions of Lemma \ref{H3estimate}, there exists $(\phi,\psi)\in\mathcal C^2(\mathbb R)\times \mathcal C^{1_x,2_y}(\Omega_L)$ such that $(c,\phi,\psi)$ satisfies \eqref{parab} and $\phi',\partial_x\psi \geq 0$.
\end{lem}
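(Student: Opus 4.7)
The plan is to extract a subsequence from $(\phi_n,\psi_n)$ converging strongly enough to pass to the limit in \eqref{rescaled}, to use the fact that the degenerate term $-\tfrac{d}{D_n}\partial_{xx}\psi_n$ vanishes in the limit, and to read the anisotropic regularity $\mathcal{C}^{1_x,2_y}$ off the limiting equation itself rather than from any global elliptic bootstrap.

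First, the uniform $H^3_{loc}(\Omega_L)$ bound of Lemma \ref{H3estimate}, combined with Rellich--Kondrachov, gives (up to extraction) strong $H^2_{loc}$ convergence of $\psi_n$ to some $\psi\in H^3_{loc}$; the two-dimensional Sobolev embedding $H^3\hookrightarrow C^{1,\alpha}$ also provides $C^{1,\alpha}_{loc}$ convergence. On the line, $\phi_n$ is uniformly bounded in $W^{2,\infty}(\mathbb R)$ (via its integral representation through $K_n$) and satisfies the uniformly elliptic ODE $-\phi_n''+c_n\phi_n'+\mu\phi_n=\psi_n(\cdot,0)$, whose right-hand side converges in $C^{\alpha}_{loc}(\mathbb R)$ by continuity of the trace operator; Schauder estimates then yield $C^{2,\alpha}_{loc}$ convergence of $\phi_n$ to a function $\phi$ solving the limit line equation.

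I then pass to the limit in the strip equation. The terms $-d\partial_{yy}\psi_n$, $c_n\partial_x\psi_n$ and $f(\psi_n)$ converge strongly in $L^2_{loc}$, while the degenerate term vanishes since
$$\left\|\tfrac{d}{D_n}\partial_{xx}\psi_n\right\|_{L^2_{loc}} \leq \tfrac{d}{D_n}\|\psi_n\|_{H^2_{loc}}\longrightarrow 0.$$
The transmission condition on $y=0$ and the Neumann condition on $y=-L$ pass to the limit by continuity of the trace operator under the strong $H^2_{loc}$ convergence, and monotonicity $\phi_n',\partial_x\psi_n\geq 0$ is preserved. To upgrade the regularity, I rearrange the limit equation as $d\partial_{yy}\psi = c\partial_x\psi - f(\psi)$: since $\partial_x\psi$ and $f(\psi)$ are continuous by the $C^{1,\alpha}_{loc}$ embedding, $\partial_{yy}\psi$ is continuous as well, which gives exactly $\psi\in\mathcal{C}^{1_x,2_y}$.

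Finally, the asymptotic conditions prescribed by \eqref{parab} are inherited from the approximating sequence: for $x\to-\infty$, Proposition \ref{tail} provides the uniform bound $\mu\phi_n,\psi_n\leq \theta e^{\lambda_n x}h_n(y)$ with $\lambda_n$ staying in a compact subinterval of $(c_{\min},+\infty)$ and $h_n$ uniformly bounded, so $\mu\phi,\psi\to 0$; for $x\to+\infty$, the monotone limits exist and are identified as $(1/\mu,1)$ using the normalisation \eqref{normalcond} together with a strong maximum principle argument on the stationary limiting system in the $y$ variable. The main obstacle is precisely the loss of horizontal diffusion in the limit, which forbids any global elliptic regularity theory; the whole purpose of Lemma \ref{H3estimate} is to produce exactly the compactness needed to bypass this degeneracy, after which the equation itself supplies the missing $y$-regularity in an anisotropic fashion.
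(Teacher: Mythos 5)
Your compactness skeleton is exactly the paper's: $H^3_{loc}$ bounds from Lemma \ref{H3estimate}, Rellich plus the embedding $H^3\hookrightarrow \mathcal C^{1,\alpha}$ for $\psi_n$, elliptic/Schauder theory on the line for $\phi_n$, vanishing of $\tfrac{d}{D_n}\partial_{xx}\psi_n$ in $L^2_{loc}$, and passage to the limit in the equation, traces and monotonicity. Where you genuinely diverge is the regularity upgrade: you read the continuity of $\partial_{yy}\psi$ directly off the limiting identity $d\partial_{yy}\psi=c\partial_x\psi-f(\psi)$, using that the right-hand side is continuous; the paper instead fixes $x_0$, checks $\psi\in\mathcal C^1([x_0,\infty),L^2)\cap\mathcal C^0((x_0,\infty),H^2)$ via Jensen, identifies $\psi$ with the unique solution of the parabolic Cauchy problem with ``time'' $x$ and initial datum $\psi(x_0,\cdot)$, and invokes parabolic theory to get $\mathcal C^{1_x,(3+\gamma)_y}$ regularity. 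Your route is more elementary and suffices for the $\mathcal C^{1_x,2_y}$ statement (one should just say a word on why the a.e.\ identity upgrades to a classical one on every $y$-slice, which follows from $\partial_{yy}\psi\in H^1_{loc}$ and continuity of $\partial_y\psi$); the paper's route buys extra H\"older regularity in $y$ and the identification of $\psi$ with a well-posed parabolic flow, which is convenient later.

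The one soft spot is the limit as $x\to+\infty$. Monotonicity gives existence of the limits, but ``a strong maximum principle argument on the stationary limiting system'' does not by itself identify them as $(1/\mu,1)$: the mechanism in the paper (deferred to Lemma \ref{energy} and Prop.\ \ref{rightlimits}) is integration by parts giving $\iint f(\psi)<\infty$ and $\iint|\partial_y\psi|^2<\infty$, which forces the limit profile to be a $y$-constant zero of $f$ equal to $\mu\phi(+\infty)$; and since the normalisation \eqref{normalcond} pins the solution at level exactly $\theta$ (not above it), one still has to exclude limiting constants in $[0,\theta]$, e.g.\ by integrating the whole limiting system over the strip and using $c>0$. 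Your sketch of the $-\infty$ limit via Prop.\ \ref{tail} with uniform bounds on $\lambda_n$ and $h_n$ matches the paper and is fine.
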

\begin{proof}

Thanks to Lemma \ref{H3estimate}, $(\psi_n)$ is bounded in $H^3_{loc}$ so by Rellich's theorem we can extract from it a sequence that converges strongly in $H^2_{loc}$ to $\psi \in H^3_{loc}$. Moreover, thanks to the Sobolev embedding $H^3(\Omega_{L,M}) \hookrightarrow \mathcal C^{1,\gamma}$ for every $0 <\gamma < 1$, by Ascoli's theorem and the process of diagonal extraction we can assume that $\psi_n$ converges in $\mathcal C^{1,\beta}$ to $\psi \in \mathcal C^{1,\beta}$ for some $0<\beta<1$ fixed. By elliptic estimates, $(\phi_n)$ is bounded in $\mathcal C^{3,\gamma}(\mathbb R)$ for every $0<\gamma<1$, so again, we can still extract and assume that $\phi_n$ converges to a $\phi \in \mathcal C^{3,\beta}$ in the $\mathcal C^{3,\beta}$ norm.

 Since $f$ satisfies $f(0) = 0$ and is Lipschitz continuous, we can assert that $f(\psi_n)$ converges to $f(\psi)$ in $H^2_{loc}$. Then we can pass to the $L^2$ limit $n\to\infty$ in equation \eqref{rescaled} satisfied by $(c_n,\phi_n,\psi_n)$ and see that \eqref{parab} is satisfied a.e. Moreover, $\phi'$ and $\partial_x\psi$ are non-negative as locally Hölder limits of positive functions. Finally, if we fix $x_0 \in \mathbb R$ we assert that $\psi(x_0,\cdot) \in L^2(-L,0)$ and $\psi \in \mathcal C^1([x_0,+\infty[,L^2(]-L,0[) \cap \mathcal C^0(]x_0,+\infty[,H^2(]-L,0[)$. Indeed, this comes from $\psi \in H^3_{loc}$ and Jensen's inequality. For instance for $x>x_0$ and $h$ small :

  \begin{alignat*}{1}
    \displaystyle\left|\partial_{yy}\psi(x+h,\cdot)-\partial_{yy}\psi(x,\cdot)\right|_{L^2(-L,0)} &= \left(\int_{-L}^0
    \left(\partial_{yy}\psi(x+h,y)-\partial_{yy}\psi(x,y)\right)^2 dy\right)^{1/2} \\
    & = \left(\int_{-L}^0 \left( \int_x^{x+h} \partial_{xyy}\psi(s,y)ds \right)^2 dy\right)^{1/2} \\
    & \leq \left(\int_{-L}^0 \left( \int_x^{x+h} \partial_{xyy}\psi(s,y)^2 ds \right) dy\right)^{1/2} \\
    & \leq |\psi|_{H^3(]x,x+h[\times[-L,0])} \to 0
  \end{alignat*}
as $h\to 0$ as the integral of an integrable function over a set whose measure tends to zero.
It is known (see \cite{BRE}, Section 10) that such a solution is unique and has $\mathcal C^{1_x,{3+\gamma}_y}$ regularity on every $[x_0 + \varepsilon,+\infty[\times[-L,0]$. Since we can do this for every $x_0 \in \mathbb R$, the regularity announced in the lemma is proved.
The uniform limit to the left is obtained thanks to Prop. \ref{tail} : on $x \leq 0$
$$\mu \phi_n, \psi_n \leq \theta e^{\lambda_n x}h_n(y) \leq \theta e^{mx}h_- $$
where $h_- > 0$ is a uniform lower bound on $h_n$ whose existence is proved in the next lemma.

The right limits are obtained in a similar fashion as in \cite{BLL90} by integration by parts and by using standard parabolic estimates instead of elliptic ones. See Lemma \ref{energy} and Prop. \ref{rightlimits} in the next section for similar and complete computations.
\end{proof}

We conclude this section with the following lemmas, that prove the uniqueness of the limit point $c$.

\begin{lem}
\label{ordre}
Suppose $c$ and $\bar c > c$ are two limit points of $c(D)$ and denote $(c_n,\phi_n,\psi_n)$ and $(\bar c_n,\underline \phi_n,\underline \psi_n)$ some associated sequences of solutions that converge to $(c,\phi,\psi)$ and $(\bar c,\underline \phi,\underline \psi)$ as in the previous theorem. Then there exists $X\in\mathbb R$ and $N\in \mathbb N$ s.t. for all $x\leq X$ and $n\geq N$, $\underline \psi_n(x,y) < \psi_n(x,y)$.
\end{lem}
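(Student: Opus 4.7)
The approach is to exploit the strict inequality $\bar c > c$ to obtain a uniform gap between the exponential decay rates governing the two families as $x \to -\infty$, and then to compare these tails via Proposition \ref{tail}.

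First I would establish a uniform separation $\bar\lambda_n - \lambda_n \geq \eta > 0$ of the characteristic exponents. Since the map $c \mapsto \lambda(D,c)$ is strictly increasing (as recorded after Remark \ref{limitelambda}), and since $c_n \to c < \bar c \leftarrow \bar c_n$, one can fix $c' \in (c, \bar c)$ and $N_1$ so that $c_n < c' < \bar c_n$ for $n \geq N_1$. The strict monotonicity of $\lambda(D_n,\cdot)$ then yields $\lambda_n \leq \lambda(D_n, c') \leq \bar\lambda_n - \eta$ for some $\eta > 0$ independent of $n$, using the uniform bounds on $c_n, \bar c_n$ furnished by Sections \ref{upboundDinf} and \ref{lowerbound} and the continuity in $(D,c)$ of the implicit relation \eqref{eqlambda}.

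Next I would apply Proposition \ref{tail} after shifting the profiles. Since $\psi_n(0,0) = \theta_1 > \theta$, $\psi_n$ is increasing in $x$ and tends to $0, 1$ at $\mp\infty$ uniformly in $y$, there is a unique translation $\tau_n$ such that the shifted pair $(\phi_n(\cdot + \tau_n), \psi_n(\cdot + \tau_n, \cdot))$ satisfies \eqref{normalcond}; moreover $\tau_n$ stays bounded thanks to the $\mathcal C^0_{loc}$ convergence $\psi_n \to \psi$ together with the non-triviality and strict monotonicity of the limit profile. The analogous $\bar\tau_n$ for the underlined family is bounded for the same reason. Proposition \ref{tail} then delivers, for $x \leq 0$ and $y \in [-L, 0]$,
$$\psi_n(x + \tau_n, y) \geq \frac{m_n}{\max h_n}\, h_n(y)\, e^{\lambda_n x}, \qquad \underline\psi_n(x + \bar\tau_n, y) \leq \theta\, \bar h_n(y)\, e^{\bar\lambda_n x}.$$

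The functions $h_n, \bar h_n$ (normalised so that $\min h_n = \min \bar h_n = 1$) are uniformly bounded above on $[-L,0]$ by the explicit formulas of Section \ref{upboundDinf} combined with the uniform bounds on $\lambda_n, \bar\lambda_n, c_n, \bar c_n$, and the amplitudes $m_n$ stay uniformly bounded below because the limit profile $\psi$ is strictly positive at the reference point. Undoing the shifts and using the boundedness of $\tau_n, \bar\tau_n$, one obtains
$$\frac{\underline\psi_n(x, y)}{\psi_n(x, y)} \leq K\, e^{(\bar\lambda_n - \lambda_n) x} \leq K\, e^{\eta x}$$
for $n \geq N_1$, $x$ sufficiently negative, and some constant $K$ independent of $n$. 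Picking $X$ with $K\, e^{\eta X} < 1$ concludes. The main obstacle I expect is keeping everything uniform in $n$: in particular the strict positivity of $m_n$ (which requires the non-degeneracy of the limit profile at the reference point) and the boundedness of the shifts $\tau_n, \bar\tau_n$, both of which rest on the convergence $\psi_n \to \psi$ being to a non-trivial, monotone-in-$x$ profile with definite limits at $\pm\infty$.
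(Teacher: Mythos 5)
Your proposal is correct and follows essentially the same route as the paper: comparison with the exponential solutions of Section \ref{upboundDinf} via Proposition \ref{tail}, uniform upper bounds on the normalised profiles $h_n,\underline h_n$, a uniform lower bound on the amplitude $m_n$ from the locally uniform convergence to a nontrivial limit, and a uniform gap $\bar\lambda_n-\lambda_n\geq\eta>0$ coming from the strict monotonicity of $\lambda$ in $c$ (the paper gets this by letting $\lambda_n\to\lambda(+\infty,c)$ and $\bar\lambda_n\to\lambda(+\infty,\bar c)$ as in Remark \ref{limitelambda}). The only extra ingredient is your translation bookkeeping with $\tau_n,\bar\tau_n$, which the paper avoids by taking the profiles already normalised as in \eqref{normalcond}; this is harmless and otherwise the arguments coincide.
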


\begin{proof}
 This relies on comparison with exponential solutions computed in section \ref{upboundDinf} and on the uniform convergence of $\psi_n$ resp. $\underline \psi_n$ to $\psi$ resp. $\underline \psi$. Indeed, if as in section \ref{upboundDinf} we denote $\bar \lambda_n$ and $\underline h_n$ the exponent and the $y$ part of the exponential solutions, we claim that~: 
 $$\exists \underline h_-, \underline h_+ > 0 \mid \underline h_- < \underline h_n(y) < \underline h_+$$
 Indeed, since $\bar c_n \to \bar c$, there exists $\bar c_+ > 0$ s.t. $c_n < \bar c_+$. Then $$\beta_n(\bar\lambda_n) < \sqrt{\frac{\bar c_+\left(\bar c_+ +\sqrt{\bar c_+^2+4\mu}\right)}{2d}} =: \underline\beta_+$$ so that $$\underline h_n(y) \leq \mu\cosh(\underline\beta_+ L) =: \underline h_+$$ and $$\underline h_n(y) \geq \frac{\mu}{\cosh(\underline\beta_+ L) + d\underline\beta_+\sinh(\underline\beta_+ L)} =: \underline h_-$$ The same holds for $h_n(y)$ with constants $h_+, h_- > 0$.

 Now normalise $\underline h_n$ s.t. $\min_{[-L,0]} \underline h_n = 1$. Then we have $\underline h_n \leq \frac{\underline h_+}{\underline h_-}$, and Prop. \ref{tail} yields on $x\leq 0$ :
$$\mu \underline \phi_n, \underline \psi_n \leq \theta e^{\bar \lambda_n x}\frac{\underline h_+}{\underline h_-}$$
On the other hand, there exists $N \in \mathbb N$ s.t. for $n\geq N$ $$m_n = \min\left( \min_{y\in[-L,0]} \psi(0,y), \mu \phi(0) \right) \geq \frac{1}{2}\min\left( \min_{y\in[-L,0]} \psi(0,y), \mu \phi(0) \right) =: m$$ so that using Prop. \ref{tail} again on $x \leq 0$, for $n \geq N$,
$$\mu \phi_n, \psi_n \geq m e^{\lambda_n x}\frac{h_-}{h_+}$$

Finally, by monotonicity of $\lambda$ (see Remark \ref{limitelambda}) $\lambda_n \to \lambda(+\infty, c)$ and $\bar \lambda_n \to \lambda(+\infty,\bar c) > \lambda(+\infty, c)$, so for $n \geq N$ large enough, $\overline\lambda_n - \lambda_n > d:=\frac{1}{2}(\lambda(+\infty,\bar c) - \lambda(+\infty,c)) > 0$. Now for $$x < \frac{1}{d} \ln\left(\frac{m h_- \underline h_-}{\theta h_+ \underline h_+}\right) =: X$$ we have the inequality announced.

\end{proof}

\begin{lem}
 If $(c,\phi, \psi)$ and $(\bar c, \underline \phi, \underline \psi)$ solve the equations \eqref{parab} in the conclusion of Lemma \ref{regulparab}, then $c = \bar c$.
\end{lem}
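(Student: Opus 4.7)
My plan is to argue by contradiction, assuming $\bar c > c$, and to run a parabolic sliding argument in the spirit of Proposition \ref{cmax}. Two structural facts will drive the proof. First, by Remark \ref{limitelambda} together with the monotonicity of the exponent $\lambda(+\infty, c)$ in $c$, the $-\infty$ decay exponents satisfy $\underline\lambda > \lambda$, so $\underline\psi$ decays strictly faster than $\psi$ as $x \to -\infty$. Second, since $\phi', \partial_x\psi > 0$, subtracting the $c$-equations \eqref{parab} satisfied by $(\phi,\psi)$ from the $\bar c$-equations shows that $(\phi, \psi)$ is a strict supersolution of the system obtained by replacing $c_\infty$ by $\bar c$:
\begin{align*}
\bar c\,\partial_x\psi - d\partial_{yy}\psi - f(\psi) &= (\bar c - c)\,\partial_x\psi > 0, \\
-\phi'' + \bar c\,\phi' + \mu\phi - \psi(\cdot, 0) &= (\bar c - c)\,\phi' > 0.
\end{align*}
The same holds for every $x$-translate of $(\phi,\psi)$, which suggests that sufficiently left-shifted translates of $(\phi,\psi)$ should dominate $(\underline\phi, \underline\psi)$.

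Next I would define
$$ \tau^* = \inf\{\tau \in \mathbb R \mid \psi(x + \tau, y) \geq \underline\psi(x, y) \text{ and } \phi(x + \tau) \geq \underline\phi(x) \text{ for all } (x, y) \in \Omega_L\} $$
and check that it is a finite real number. The set is nonempty because, as $\tau \to +\infty$, $\psi(\cdot+\tau,\cdot)$ is close to $1$ on any bounded $x$-set, while at $-\infty$ the strict inequality $\underline\lambda > \lambda$ forces $\psi(x + \tau, y) \geq \underline\psi(x, y)$ on a half-line $x \leq -A$ as well. It is bounded below because, for $\tau \to -\infty$, $\psi(x_0 + \tau, \cdot) \to 0$ at any fixed $x_0$ while $\underline\psi(x_0, \cdot) > 0$. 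By continuity, at $\tau = \tau^*$ the functions $W := \psi(\cdot + \tau^*, \cdot) - \underline\psi \geq 0$ and $Z := \phi(\cdot + \tau^*) - \underline\phi \geq 0$ are non-negative, and with the Lipschitz quotient $k(x,y) := [f(\psi(x+\tau^*,y)) - f(\underline\psi(x,y))]/W(x,y) \in L^\infty$, they satisfy the strict inequalities
\begin{align*}
\bar c\,\partial_x W - d\partial_{yy} W - k\, W &= (\bar c - c)\,\partial_x\psi(\cdot + \tau^*, \cdot) > 0 \text{ on } \Omega_L, \\
-Z'' + \bar c\, Z' + \mu\, Z - W(\cdot, 0) &= (\bar c - c)\,\phi'(\cdot + \tau^*) > 0 \text{ on } \mathbb R,
\end{align*}
coupled through $d\partial_y W(\cdot, 0) = \mu Z - W(\cdot, 0)$ and $\partial_y W(\cdot, -L) = 0$.

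I would then treat $x$ as a time variable and apply the strong parabolic maximum principle together with Hopf's lemma, following the scheme of Proposition \ref{cmax}. A zero of $W$ at an interior point of $\Omega_L$ would force the left-hand side of the first inequality to be $\leq 0$, contradicting the strict positivity of the right-hand side; a zero on $y = 0$ would, via Hopf, transfer to a zero of $Z$ at the same abscissa, which is then excluded by the strict elliptic inequality for $Z$ at an interior minimum; and a zero on $y = -L$ contradicts the Neumann condition via Hopf. Hence $W > 0$ and $Z > 0$ throughout.

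Finally, to contradict the minimality of $\tau^*$, I would reproduce the last step of the proof of Proposition \ref{cmax}: on any large compact $K_a = [-a, a] \times [-L, 0]$, the strict positivity $\min_{K_a} W > 0$, $\min_{[-a, a]} Z > 0$ allows me to decrease $\tau^*$ by some $\varepsilon_a > 0$ while preserving the inequalities on $K_a$. Outside $K_a$, for $a$ large enough, both $\psi(\cdot + \tau^* - \varepsilon_a)$ and $\underline\psi$ are either close to $0$ (on $x < -a$) or close to $1$ (on $x > a$), so the coefficient analogous to the $k_a$ of Proposition \ref{cmax} has the sign of $-f'(0) = 0$ or $-f'(1) > 0$, and the weak maximum principle extends the ordering globally; this contradicts the definition of $\tau^*$ and forces $\bar c = c$. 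The main obstacle I anticipate is precisely this last step: propagating the strict ordering from a large compact to all of $\Omega_L \cup \mathbb R$ while reconciling the parabolic-in-$x$ character of the strip equation with the genuinely elliptic equation on the line. As in Proposition \ref{cmax}, the ignition structure of $f$ (specifically $f'(0) = 0$ and $f'(1) < 0$) is the essential ingredient that makes this outer argument work.
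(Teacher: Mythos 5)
Your overall strategy is the paper's: a sliding argument run with the strong parabolic maximum principle in $x$ and Hopf's lemma, using the ordering of the tails at $-\infty$ and the sign of the linearized coefficient near the limit states at $\pm\infty$. But as written there are two genuine gaps. First, your verification that the sliding set is nonempty does not cover the right half-line: for large $\tau$ you only argue that $\psi(\cdot+\tau,\cdot)$ is close to $1$ on \emph{bounded} $x$-sets, while on $x\geq A$ both $\psi(\cdot+\tau,\cdot)$ and $\underline\psi$ tend to the \emph{same} limit $1$, so translation alone gives no pointwise ordering there (it would require comparing the rates of approach to $1$, which you do not have). The paper avoids exactly this by never asking for global dominance of the initial translate: it imposes the ordering only on a slice $x=a$ (where $\sup_y\underline\psi(a,\cdot)<1$, so a large left shift works) and then propagates it to $x>a$ by the parabolic maximum principle, using that on $x>a$ both functions exceed $1-\varepsilon$ so the quotient $\frac{f(\psi)-f(\underline\psi)}{\psi-\underline\psi}$ is nonpositive. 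You invoke precisely this device in your final step, but it is also needed at the initial stage, otherwise $\tau^*$ is not known to exist.

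Second, your treatment of $-\infty$ rests on the claim that $\psi$ and $\underline\psi$ decay exactly at the rates $\lambda(+\infty,c)<\lambda(+\infty,\bar c)$, justified only by Remark \ref{limitelambda} and the monotonicity of the exponent. Knowing the exponents of the exponential solutions of the linearization does not by itself give two-sided exponential bounds on the actual profiles; in particular the lower bound on the tail of $\psi$ needs a comparison argument, and the paper's Proposition \ref{tail} proves it only for the elliptic system at finite $D$. This is why the statement is restricted to solutions arising as in Lemma \ref{regulparab}: the paper establishes the ordering near $-\infty$ at the level of the approximating solutions $(\phi_n,\psi_n)$, $(\underline\phi_n,\underline\psi_n)$ (Lemma \ref{ordre}, via Proposition \ref{tail} and uniform bounds on $h_n$, $\underline h_n$) and then passes to the limit; no tail estimate is proved directly for the degenerate system \eqref{parab}. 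Your proof should either invoke Lemma \ref{ordre} (or reprove a parabolic analogue of Proposition \ref{tail} for \eqref{parab}), rather than read the decay off the exponents. A minor further point: Lemma \ref{regulparab} only gives $\phi',\partial_x\psi\geq 0$, so the right-hand sides $(\bar c-c)\partial_x\psi$, $(\bar c-c)\phi'$ are only nonnegative; the touching analysis should therefore go through the strong maximum principle with $W\not\equiv 0$ (as in the paper) rather than through pointwise strict positivity.
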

\begin{proof}
 Since these solutions have classical regularity, we can apply the strong parabolic maximum principle and the parabolic Hopf's lemma (see for instance \cite{FRI}) in a similar fashion as the elliptic case of Proposition \ref{cmax}.
 
 First, observe that  
 $$c \partial_x \underline{\psi} - d\partial_{yy}  \underline{\psi} = f(\underline{\psi}) + (c - \bar c)\partial_x  \underline{\psi} \leq f( \underline{\psi})$$
 Now, slide $\mu\phi, \psi$ to the left above $\mu \underline\phi, \underline \psi$ this way : just do it on a slice $x=a$ with $a > 0$ large enough so that on $x > a$ we know the sign of $\frac{f(\psi) - f(\underline \psi)}{\psi - \underline \psi}$ and can use the parabolic maximum principle with initial "time" $x=a$ (dotted line below) :
 
\begin{equation*}
  \begin{tikzpicture}
  \draw (-7,0) -- (7,0) node[pos=0.5,below] {\small{$d\partial_y (\psi - \underline\psi) + (\psi - \underline\psi) = \mu(\phi -\underline\phi)$}} node[pos=0.5,above] {$-(\phi -\underline\phi)'' + c(\phi -\underline\phi)' =(\psi - \underline\psi) - \mu (\phi -\underline\phi)$}; ;
  
    \node at (6,0.33) {$\phi -\underline\phi\to 0$};
    \node at (-6,0.33) {$0 \leftarrow \phi -\underline\phi$};

  \node at (0,-1.5) {$c\partial_x (\psi - \underline\psi) - d\partial_{yy} (\psi - \underline\psi) - \frac{f(\psi) - f(\underline \psi)}{\psi - \underline \psi}(\psi - \underline\psi) \geq 0$};

  \draw (-7,-3) -- (7,-3) node[pos=0.5,above] {\small{$\partial_y (\psi - \underline\psi)= 0$}};
  
  \draw[dashed] (-4.6,-3) -- (-4.6,0) node[pos=-0.05,below] {$\mu\phi,\psi > \mu\underline{\phi}, \underline \psi > 1 - \varepsilon$};


  \node at (6,-1.5) {$\psi - \underline\psi \to 0$};
  \node at (-6,-1.5) {$0 \leftarrow \psi - \underline\psi$};
  \end{tikzpicture}
\end{equation*}
 Treating the upper boundary as before, we obtain that $\mu\phi,\psi > \mu\underline{\phi}, \underline \psi$ on $x \geq a$. Using Proposition \ref{ordre}, the order is also true for $x$ negative enough, so that there is only a compact rectangle left where the order is needed : for this, just slide $\mu\phi, \psi$ enough to the left.

Now, as before, slide back to the right until the order is not true any more, finishing with the minimum possible translate $\mu\phi(r_0 +x) \geq \mu \underline\phi(x), \psi (r_0 + x,y) \geq \underline\psi(x,y)$. The strong parabolic maximum principle (without sign assumption) gives that the order is still strict (use a starting $x$ smaller than the $x$ where an eventual contact point happens) since $\underline \psi \not\equiv \psi$. Thus, on any compact $K_a$ as large as we want, we can slide $\mu \phi,\psi$ $\varepsilon_a$ more to the right again, the order still being true on $K_a$. Now just chose $a$ large enough so that $-a <  X +r_0 - \varepsilon_a$, and so that on $x > a$ we know the sign of $\frac{f(\psi) - f(\underline \psi)}{\psi - \underline \psi}$ : Prop. \ref{ordre} and the parabolic strong maximum principle give that  $\mu\phi(r_0 - \varepsilon_a +x) > \mu \underline\phi(x), \psi (r_0 - \varepsilon_a + x,y) > \underline\psi(x,y)$ which is a contradiction.

\end{proof}

\begin{rmq}\
\begin{itemize}
\item We could avoid the use of exponential solutions in the proof of Lemma \ref{ordre} : indeed, by considering some fixed translates $\phi_n^r, \psi_n^r$ of $\phi, \psi$ we can have, for $n$ large enough thanks to the locally uniform convergence and if $r$ is large enough

$\psi_n^r(-a,y) \geq (1-\delta)\psi^r(-a,y) \geq (1+\delta) \underline \psi(-a,y) \geq \underline \psi_n(-a,y)$

\item This idea of using the parabolic maximum principle to treat a degenerate elliptic equation motivates the following section where we answer the question : can the solution of \eqref{parab} be recovered by a direct method, without seeing it as the limit of the more regular solutions of \eqref{rescaled} ?
\end{itemize}
\end{rmq}

\section{Direct study of the limiting problem}
\label{direct}
We investigate the following elliptic-parabolic non-linear system in $$[0,M] \times([0,M] \times[-L,0]) = [0,M]\times \Omega_{L,M}$$
\begin{equation}
\label{boxed}
\begin{tikzpicture}
\draw (-5,0) -- (-5,-3) node[pos=0.5,left] {$v  = 0$};

\draw (-5,0) -- (4,0) node[pos=0.5,below] {\small{$d\partial_y v + v= \mu u$}} node[pos=0.5,above] {$-u'' + cu' + \mu u - v = 0$} node[pos=-0.05,above] {$u=0$} node[pos=1,above] {$u=1/\mu$} ;
\ptn(-5,0)(1.5);\ptn(4,0)(1.5);

\node at (0,-1.5) {$c  \partial_x v - d\partial_{yy} v = f(v)$};

\draw (-5,-3) -- (4,-3) node[pos=0.5,above] {$-\partial_y v  = 0$};

\draw[dashed] (4,0) -- (4,-3) node[pos=0.5,right] {$v  = ?$};

\end{tikzpicture}
\end{equation}
with $c,u(x),v(x,y)$ as unknowns.
We call a supersolution of \eqref{boxed} a solution of \eqref{boxed} where the $=$ signs are replaced by $\geq$. The plan of this section is the following : \begin{itemize}
\item First, we study the linear background of \eqref{boxed} in order to use Perron's method.
\item Then we prove the well-posedness of \eqref{boxed} and study monotonicity and uniqueness properties of the solution.
\item In a third subsection, we study the influence of $c$.
\item Finally, under a suitable normalisation condition on $c_M$ obtained thanks to the previous step, we study the limit $M \to +\infty$ of \eqref{boxed} and recover the solution of \eqref{parab}.
\end{itemize}

\subsection{Linear background}
In this subsection, we recreate the standard tools behind Perron's method. Even though these are quite standard, we give the proofs in our precise case because of the specificity of mixing parabolic and elliptic theory. Let $k > 0$ be a constant. We look at the following linear system of inequations
\begin{alignat*}{1}
&\begin{cases}
  c\partial_x v - d\partial_{yy}v + kv = h \text{ in } \Omega_{L,M} \\
  d\partial_y v(\cdot,0) + v(\cdot,0) \geq \mu u \text{ on } (0,M) \\
  -\partial_y v \leq 0 \text{ on } y=-L\\
  -\partial_{xx} u + c \partial_x u + (\mu+k)u - v(\cdot,0) = g \text{ in } (0,M) \\
\end{cases}
\end{alignat*}
along with the parabolic and elliptic limiting conditions
$$v \geq 0 \text{ on } x=0$$
$$ u(0) \geq 0, u(M) \geq 0$$
represented from now on as the following diagram
\begin{equation}
\label{lin}
\begin{tikzpicture}
\draw (-5,0) -- (-5,-3) node[pos=0.5,left] {$v \geq 0$};

\draw (-5,0) -- (4,0) node[pos=0.5,below] {\small{$d\partial_y v + v \geq \mu u$}} node[pos=0.5,above] {$-u'' + cu' + (\mu+k)u - v = g$} node[pos=-0.05,above] {$u \geq 0$} node[pos=1,above] {$u \geq 0$} ;

\node at (0,-1.5) {$c  \partial_x v - d\partial_{yy} v + kv = h$};

\draw (-5,-3) -- (4,-3) node[pos=0.5,above] {$-\partial_y v  \geq 0$};

\draw[dashed] (4,0) -- (4,-3) node[pos=0.5,right] {$v  = ?$};
\ptn(-5,0)(1.5);\ptn(4,0)(1.5);

\end{tikzpicture}
\end{equation}

\begin{prop}{(Maximum principle.)}
\label{maxp}
If $(u,v)$ are $\mathcal C^2$ functions up to the boundary of resp. $(0,M)$ and $\Omega_{L,M}$ that solve inequation $\eqref{lin}$ with $g,h \geq 0$ then $$u,v \geq 0$$ Moreover, $u,v > 0$ in resp. $(0,M)$ and $\Omega_{L,M}$ or $u \equiv 0, v\equiv 0$.
\end{prop}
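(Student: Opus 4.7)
The plan is to argue by contradiction using the attained minima
\[
m_u := \min_{[0,M]} u, \qquad m_v := \min_{\overline{\Omega_{L,M}}} v.
\]
The key observation is that the coupling at $y=0$ links $v$ to $\mu u$, so splitting according to which of $m_v, \mu m_u$ is smaller lets me play one equation against the other, and the crucial rigidity comes from the strictly positive zero-order term $k > 0$. Explicitly, suppose for contradiction that $\min(\mu m_u, m_v) < 0$.

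First suppose $m_v < \mu m_u$, so that $m_v < 0$. Let $(x^*, y^*)$ realise $m_v$; since $v(0,\cdot) \geq 0 > m_v$, necessarily $x^* > 0$. If $y^* = 0$, the first-order boundary minimum condition yields $\partial_y v(x^*, 0) \leq 0$, and the Robin-type coupling gives
\[
\mu u(x^*) \;\leq\; v(x^*, 0) \;=\; m_v \;<\; \mu m_u \;\leq\; \mu u(x^*),
\]
a direct contradiction. Otherwise $(x^*, y^*)$ lies in the interior, on the face $x = M$, or on the face $y = -L$; in each case the usual first-order conditions give $\partial_x v(x^*,y^*) \leq 0$ and $\partial_{yy} v(x^*,y^*) \geq 0$, the latter at $y = -L$ coming from a Taylor expansion combined with the boundary inequality, which forces $\partial_y v(x^*, -L) = 0$. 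Plugging into the parabolic equation produces
\[
h(x^*, y^*) \;=\; c\partial_x v - d\partial_{yy} v + k v \;\leq\; k m_v \;<\; 0,
\]
contradicting $h \geq 0$.

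Next suppose $\mu m_u \leq m_v$, so that $m_u < 0$. Since $u(0), u(M) \geq 0$, the minimum $m_u$ is attained at an interior point $x_0 \in (0, M)$, where $u'(x_0) = 0$ and $u''(x_0) \geq 0$. Rearranging the ODE gives
\[
(\mu+k)\, u(x_0) \;=\; g(x_0) + v(x_0, 0) + u''(x_0) \;\geq\; v(x_0, 0) \;\geq\; m_v \;\geq\; \mu m_u \;=\; \mu u(x_0),
\]
so $k\, u(x_0) \geq 0$, which contradicts $k > 0$ and $u(x_0) < 0$. This finishes the weak conclusion $u, v \geq 0$.

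For the dichotomy between strict positivity and identical vanishing, the argument combines the classical strong maximum principles on each component with the coupling at $y=0$. If $u(x_0) = 0$ for some $x_0 \in (0, M)$, the elliptic strong maximum principle applied to the operator $-\partial_{xx} + c\partial_x + (\mu+k)$ (non-negative zero-order term, non-negative right-hand side $g + v(\cdot,0)$) forces $u \equiv 0$ on $[0, M]$, and then the ODE gives $v(\cdot, 0) \equiv 0$ and $g \equiv 0$. If instead $v$ vanishes first at a non-initial point, the parabolic strong maximum principle ($x$ playing the role of time, positive drift $c$) propagates the zero backwards in $x$ down to $x = 0$. The main obstacle is combining these with the coupling to exclude a partial vanishing on the top face $y=0$: there, the parabolic Hopf lemma demands a strictly negative outward normal derivative $\partial_y v(x_0, 0) < 0$ at an isolated boundary zero, whereas the coupling $d\partial_y v + v \geq \mu u = 0$ forces $\partial_y v(x_0, 0) \geq 0$, and this rigidity, combined with forward uniqueness for the parabolic initial-boundary value problem with homogeneous data, yields $v \equiv 0$ throughout $\Omega_{L,M}$.
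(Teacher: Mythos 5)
Your proof is correct, and its first half is in fact more self-contained than the paper's: where the paper locates a putative negative minimum of $v$ on $\{y=0\}$ via the strong parabolic maximum principle and Hopf's lemma and then contradicts the lower bound $u\geq \min v/(\mu+k)$ coming from the ODE, you split according to whether $m_v<\mu m_u$ or $\mu m_u\leq m_v$ and evaluate the differential inequalities directly at the minimum point, the strictly positive zero-order term $k>0$ doing the work in both branches; this needs nothing beyond one-sided first/second-order conditions at minima, plus the Robin coupling at $y=0$. (Note that you read the bottom condition as $-\partial_y v\geq 0$, as in the diagram \eqref{lin}; that is indeed the sign for which the principle holds, the itemized listing above it carrying the opposite, inconsistent sign.) For the strict-positivity dichotomy you follow essentially the paper's route: elliptic strong maximum principle for $u$ (giving $u\equiv 0$, hence $g\equiv 0$ and $v(\cdot,0)\equiv 0$), backward propagation of interior zeros of $v$, and the Hopf-versus-Robin rigidity on $\{y=0\}$. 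Two small points there: the closing appeal to ``forward uniqueness with homogeneous data'' is not the right tool, since $h\geq 0$ need not vanish — but the Hopf contradiction you state just before it (with $v>0$ on an interior region touching $\{y=0\}$, supplied by the strong minimum principle) already forces $v\equiv 0$; and a zero of $v$ on $\{y=-L\}$ should be excluded by the same Hopf argument against the Neumann condition there, a case you leave implicit (as, admittedly, does the paper).
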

\begin{proof}
We simply mix parabolic and elliptic strong maximum principles. Suppose that $\min v < 0$. By the parabolic strong maximum principle and Hopf's lemma, $\min v$ is necessarily reached on $y=0$ and at this point, $\mu u < \min v$. This is a contradiction with $$-\partial_{xx}u + c\partial_x u + (\mu + k) u \geq v$$ with its endpoints conditions that ensure $$u \geq \frac{\min v}{\mu+k}$$ Thus we have $v\geq 0$ and the elliptic maximum principle gives also $u \geq 0$. Finally, if $v(x_0,y_0) = 0$ with $x_0 > 0$ then by the strong parabolic maximum principle, $v \equiv 0$ on $x < x_0$, thus $u \equiv 0$ on $x < x_0$ which by the elliptic strong maximum principle gives $u  \equiv 0$, so that $v = g = u \equiv 0$ and $\partial_y v \geq 0$ on $y=0$, and by parabolic Hopf's lemma, $v \equiv 0$.
\end{proof}

\begin{cor}{(Comparison principle.)}
 Let $k > \text{Lip}(f)$. Then we have the following comparison principle :
 if $g_1 \leq g_2$ and $h_1 \leq h_2$, then if $(u_1,v_1)$ and $(u_2,v_2)$ are solutions $\mathcal C^2$ up to the boundary of 
\begin{equation}
\label{bizarre}
\begin{tikzpicture}
\draw (-5,0) -- (-5,-3) node[pos=0.5,left] {$v_i  = 0$};

\draw (-5,0) -- (4,0) node[pos=0.5,below] {\small{$d\partial_y v_i + v_i= \mu u_i$}} node[pos=0.5,above] {$-u_i'' + cu_i' + (\mu+k)u_i - v_i = kg_i$} node[pos=-0.05,above] {$u_i=0$} node[pos=1,above] {$u_i=1/\mu$} ;

\node at (0,-1.5) {$c  \partial_x v_i - d\partial_{yy} v_i + kv_i = f(h_i) + kh_i$};

\draw (-5,-3) -- (4,-3) node[pos=0.5,above] {$-\partial_y v_i  = 0$};
\draw[dashed] (4,0) -- (4,-3) node[pos=0.5,right] {$v_i  = ?$};
\ptn(-5,0)(1.5);\ptn(4,0)(1.5);

\end{tikzpicture}
\end{equation}
then $$u_1 \leq u_2, v_1 \leq v_2$$

\end{cor}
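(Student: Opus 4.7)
The plan is to reduce the comparison to the maximum principle of Proposition \ref{maxp}. I would introduce $U := u_2 - u_1$ and $V := v_2 - v_1$, subtract the two copies of system \eqref{bizarre}, and observe that since the linear operators on the left-hand side depend on neither $i$ nor the data, they cancel cleanly. What remains is a system for $(U,V)$ fitting the framework of \eqref{lin}, with source terms
\[
g = k(g_2 - g_1), \qquad h = \bigl[f(h_2) - f(h_1)\bigr] + k(h_2 - h_1),
\]
and boundary data $U(0) = U(M) = 0$, $V = 0$ on $x=0$, together with the Robin condition $d\partial_y V + V = \mu U$ on $y=0$ and the Neumann condition $-\partial_y V = 0$ on $y=-L$, all holding as equalities and therefore as the required inequalities in \eqref{lin}.

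The only step requiring an argument is the non-negativity of the sources. For $g$ this is immediate from $g_1 \le g_2$ and $k > 0$. The key point, and the only place where the hypothesis $k > \text{Lip}(f)$ is used, is $h \ge 0$: from $f(h_2) - f(h_1) \ge -\text{Lip}(f)(h_2 - h_1)$ one gets $h \ge [k - \text{Lip}(f)](h_2 - h_1) \ge 0$. Equivalently, the shifted nonlinearity $t \mapsto f(t) + k t$ is non-decreasing on the range of the $h_i$. Without this shift the argument would fail, since the ignition-type $f$ is not monotone.

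With non-negativity of the data in hand, applying Proposition \ref{maxp} to $(U,V)$ yields $U, V \ge 0$, i.e. $u_1 \le u_2$ and $v_1 \le v_2$. I do not anticipate a genuine obstacle: the whole reduction is the standard one, the single subtle point being the Lipschitz-shift observation that restores monotonicity of the reaction term and justifies introducing the auxiliary zeroth-order coefficient $k$ in the formulation \eqref{bizarre} in the first place.
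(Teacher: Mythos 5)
Your proposal is correct and follows exactly the paper's argument: subtract the two systems, observe $(u_2-u_1,v_2-v_1)$ solves \eqref{lin} with $g = k(g_2-g_1)\ge 0$ and $h = f(h_2)-f(h_1)+k(h_2-h_1)\ge (k-\text{Lip}(f))(h_2-h_1)\ge 0$, then apply Proposition \ref{maxp}. The Lipschitz-shift observation you single out is precisely the point of the $k$-shift in the paper as well.
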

\begin{proof}
 Just observe that $(u_2-u_1,v_2-v_1)$ solve \eqref{lin} with $g = k(g_2-g_1) \geq 0$ and $h = f(h_2) - f(h_1) + k(h_2-h_1) \geq (-Lip(f) + k)(h_2-h_1) \geq 0$.
\end{proof}

\begin{cor}{(Supersolution principle.)} Let $(\bar u, \bar v)$ be a supersolution of \eqref{boxed}. If $(u,v)$ is a solution of \eqref{bizarre} with data $(\bar u, \bar v)$ then $$u \leq \bar u, v\leq \bar v$$
\end{cor}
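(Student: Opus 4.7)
The plan is to set $U = \bar u - u$ and $V = \bar v - v$, check that the pair $(U,V)$ fits into the framework of the linear inequation \eqref{lin} with right-hand sides $g = 0$ and $h = 0$ and with nonnegative boundary data, and then apply Proposition \ref{maxp} directly.

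More concretely, I would compute each line of \eqref{lin} separately. For the parabolic equation on $V$: the supersolution of \eqref{boxed} satisfies $c\partial_x \bar v - d\partial_{yy}\bar v \geq f(\bar v)$, which rewrites as $c\partial_x \bar v - d\partial_{yy}\bar v + k\bar v \geq f(\bar v) + k\bar v$. Since $v$ solves exactly $c\partial_x v - d\partial_{yy}v + kv = f(\bar v) + k\bar v$, subtracting gives $c\partial_x V - d\partial_{yy} V + kV \geq 0$. For the elliptic equation on $U$: by definition of a supersolution, $-\bar u'' + c\bar u' + \mu\bar u - \bar v \geq 0$, and adding $k\bar u$ to both sides yields $-\bar u'' + c\bar u' + (\mu+k)\bar u - \bar v \geq k\bar u$, while $u$ solves $-u'' + cu' + (\mu+k)u - v = k\bar u$; subtraction gives $-U'' + cU' + (\mu+k)U - V \geq 0$. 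Note that the shift by $k$ is precisely what makes the right-hand sides cancel; no condition on the size of $k$ is needed here since the data are identical on both sides.

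Next I would check the boundary conditions. On $y=0$, the supersolution gives $d\partial_y \bar v + \bar v \geq \mu\bar u$ and the solution gives $d\partial_y v + v = \mu u$, hence $d\partial_y V + V \geq \mu U$. On $y=-L$, both satisfy $-\partial_y(\cdot) = 0$ (or $\geq 0$ for the supersolution), so $-\partial_y V \geq 0$. On $x=0$ one has $\bar v \geq 0$ and $v = 0$, so $V \geq 0$. Finally $u(0) = 0 \leq \bar u(0)$ and $u(M) = 1/\mu \leq \bar u(M)$, giving $U(0), U(M) \geq 0$.

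Thus $(U, V)$ is a $\mathcal C^2$ pair satisfying all the inequalities of the system \eqref{lin} with $g \equiv 0$ and $h \equiv 0$ and with nonnegative parabolic and elliptic boundary data. Proposition \ref{maxp} then yields $U, V \geq 0$, i.e.\ $u \leq \bar u$ and $v \leq \bar v$, as claimed. There is no real obstacle here beyond the bookkeeping of signs and the observation that the $k$-shift was engineered so that the difference of a supersolution and a solution of \eqref{bizarre} (with the supersolution taken as data) lands exactly in the framework of the linear maximum principle.
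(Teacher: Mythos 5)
Your proof is correct and follows exactly the paper's argument: form the differences $(\bar u - u,\bar v - v)$, observe via the $k$-shift that they satisfy the linear inequation \eqref{lin} with nonnegative right-hand sides and boundary data, and conclude with the maximum principle of Proposition \ref{maxp}. The extra remark that no Lipschitz condition on $k$ is needed here is accurate, since the data of \eqref{bizarre} coincide with the supersolution itself.
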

\begin{proof}
 Observe that $(\bar u - u, \bar v - v)$ solves an inequation \eqref{lin} with $g,h \geq 0$.
\end{proof}

\begin{prop}({Unique solvability of the linear system.})
 Let $c > 0$,  $(g,h) \in \mathcal C^\alpha(\mathbb[0,M])\times\mathcal C^{\alpha/2,\alpha}(\Omega_{L,M})$ and $k>\text{Lip} f$. Then there exists a unique solution $(u,v)\in \mathcal C^{2,\alpha}(\mathbb[0,M])\times\mathcal C^{1+\alpha/2,2+\alpha}(\Omega_{L,M})$ of 

\begin{equation}
\label{lingh}
\begin{tikzpicture}
\draw (-5,0) -- (-5,-3) node[pos=0.5,left] {$v  = 0$};

\draw (-5,0) -- (4,0) node[pos=0.5,below] {\small{$d\partial_y v + v= \mu u_i$}} node[pos=0.5,above] {$-u'' + cu' + (\mu+k)u - v = g$} node[pos=-0.05,above] {$u=0$} node[pos=1,above] {$u=1/\mu$} ;

\node at (0,-1.5) {$c  \partial_x v- d\partial_{yy} v + kv = h$};

\draw (-5,-3) -- (4,-3) node[pos=0.5,above] {$-\partial_y v  = 0$};
\draw[dashed] (4,0) -- (4,-3) node[pos=0.5,right] {$v  = ?$};
\ptn(-5,0)(1.5);\ptn(4,0)(1.5);

\end{tikzpicture}
\end{equation}
\end{prop}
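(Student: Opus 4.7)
Uniqueness is immediate from Proposition \ref{maxp}: if $(u_1,v_1)$ and $(u_2,v_2)$ both solve \eqref{lingh}, their difference satisfies the homogeneous version of \eqref{lin} with $g \equiv h \equiv 0$ and vanishing Dirichlet/initial data, so Proposition \ref{maxp} applied to the difference and to its opposite forces equality.

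For existence, my plan is a Banach fixed point argument that decouples the parabolic half-problem from the one-dimensional elliptic one. Let $\mathcal{A}$ denote the closed convex subset of $\mathcal{C}^0([0,M])$ consisting of continuous functions $\tilde u$ with $\tilde u(0)=0$ and $\tilde u(M)=1/\mu$. For $\tilde u \in \mathcal{A}$ smooth enough, define $v = v[\tilde u]$ as the unique classical solution, with $x$ playing the role of time, of
\begin{alignat*}{1}
&\begin{cases}
c\partial_x v - d\partial_{yy} v + kv = h \text{ in } \Omega_{L,M}, \\
d\partial_y v(x,0) + v(x,0) = \mu \tilde u(x) \text{ on } (0,M), \\
-\partial_y v(x,-L) = 0 \text{ on } (0,M), \\
v(0,y) = 0 \text{ on } (-L,0),
\end{cases}
\end{alignat*}
furnished by classical parabolic Schauder theory; the corner compatibility at $(0,0)$, namely $d\partial_y v(0,0)+v(0,0)=\mu \tilde u(0) = 0$, is automatic thanks to $\tilde u(0)=0$ and $v(0,\cdot)\equiv 0$, and similarly at $(0,-L)$. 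Then set $T(\tilde u) := u$, the unique $\mathcal{C}^{2,\alpha}([0,M])$ solution of
$$-u'' + cu' + (\mu+k) u = g + v[\tilde u](\cdot, 0), \quad u(0)=0,\ u(M)=1/\mu,$$
produced by one-dimensional Schauder theory.

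The heart of the argument is the contraction estimate in $\mathcal{C}^0$. Given $\tilde u_1, \tilde u_2$, the difference $w = v[\tilde u_1] - v[\tilde u_2]$ solves the homogeneous parabolic equation with homogeneous initial and bottom Neumann data, and Robin data $\mu(\tilde u_1 - \tilde u_2)$ at $y=0$. Comparing with the constant supersolution $\phi \equiv \mu |\tilde u_1 - \tilde u_2|_\infty$, which satisfies $c\partial_x \phi - d\partial_{yy} \phi + k\phi = k\phi \geq 0$ and dominates $w$ on the entire parabolic boundary, yields $|w|_\infty \leq \mu |\tilde u_1 - \tilde u_2|_\infty$ by the parabolic maximum principle. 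Then $U = T(\tilde u_1) - T(\tilde u_2)$ solves $-U''+cU'+(\mu+k)U = w(\cdot,0)$ with $U(0)=U(M)=0$, and the elliptic maximum principle gives
$$|U|_\infty \leq \frac{|w(\cdot,0)|_\infty}{\mu+k} \leq \frac{\mu}{\mu+k}\,|\tilde u_1 - \tilde u_2|_\infty.$$
Since $\mu/(\mu+k)<1$, $T$ extends by continuity to a $\mathcal{C}^0$-contraction of $\mathcal{A}$ into itself, and Banach's theorem delivers a unique fixed point $u \in \mathcal{A}$.

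The main subtlety is promoting this $\mathcal{C}^0$ fixed point to the Hölder regularity $(u,v) \in \mathcal{C}^{2,\alpha}([0,M]) \times \mathcal{C}^{1+\alpha/2, 2+\alpha}(\Omega_{L,M})$. This is achieved by iterating Schauder estimates and absorbing low-order terms via interpolation: using $|\tilde u|_{\mathcal{C}^{1+\alpha/2}} \leq \varepsilon |\tilde u|_{\mathcal{C}^{2,\alpha}} + C_\varepsilon |\tilde u|_{\mathcal{C}^0}$ in the parabolic Schauder estimate for $v[u_n]$ and then in the elliptic Schauder estimate for $u_{n+1} = T(u_n)$ (where $u_n$ is the iteration sequence started from a smooth $u_0 \in \mathcal{A}$), one obtains a uniform bound on $|u_n|_{\mathcal{C}^{2,\alpha}}$. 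Ascoli's theorem then extracts a subsequence converging in $\mathcal{C}^{2,\beta} \times \mathcal{C}^{1+\beta/2, 2+\beta}$ for every $\beta<\alpha$, and $\mathcal{C}^0$-uniqueness of the fixed point identifies the limit as $(u, v[u])$, which lies in the announced Hölder classes by the uniform Schauder bounds.
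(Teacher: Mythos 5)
Your proposal is correct and follows essentially the same route as the paper: decouple the parabolic problem (with $u$ fed in as Robin datum) from the one-dimensional elliptic one, obtain a sup-norm contraction with constant $\mu/(\mu+k)$ from the parabolic and elliptic maximum principles, apply Banach's fixed point theorem on a space of merely continuous functions, and then bootstrap regularity. The only cosmetic difference is the final step: the paper upgrades the fixed point directly ($\tilde S(u)\in L^\infty \Rightarrow u\in W^{2,\infty}\subset \mathcal C^{1+\alpha/2} \Rightarrow$ Schauder), whereas you derive uniform $\mathcal C^{2,\alpha}$ bounds along the Picard iterates via interpolation and Ascoli — both are fine.
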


\begin{proof}
 The classical parabolic theory allows us to set $$S : \mathcal C^{1+\alpha/2} \to \mathcal C^{1+\alpha/2}, \quad U \mapsto v(\cdot,0)$$ where $v$ solves the last four equations in \eqref{lingh} with $u$ replaced by $U$. $S$ is affine and thanks to parabolic Hopf's lemma, uniformly continuous for the $L^\infty$ norm : $$|SU_1-SU_2|_\infty \leq \mu|U_1-U_2|_\infty$$  Since $\mathcal C^{1+\alpha/2}([0,M])$ is dense in $BUC([0,M])$, we can extend $S$ to a uniformly continuous affine function $\tilde S$ on $X=BUC([0,M])$.

 On the other hand thanks to classical ODE theory we can set $$T : L^\infty \to W^{2,\infty}, \quad V \mapsto u$$ where $u$ is solution of the first equation in \eqref{lin} with $v(\cdot,0)$ replaced by $V$. Observe also thanks to elliptic regularity that $T$ sends $\mathcal C^\alpha$ to $\mathcal C^{2,\alpha}$.

 By the strong elliptic maximum principle, observe that $T \circ \tilde S : X \to X$ is a contraction mapping : $$|T\tilde SU_1-T\tilde SU_2|_\infty \leq \frac{\mu}{\mu+k}|U_1-U_2|_\infty$$ By use of Banach fixed point theorem, it has a unique fixed point $u \in X$. Observe now that $u = T(\tilde S(u))$ and since $\tilde S(u) \in L^\infty$, $u = T(\tilde S(u)) \in  W^{2,\infty} \subset \mathcal C^{1+\alpha/2}$ and in the end $\tilde S(u) = S(u) \in \mathcal C^{1+\alpha/2}$ so that $u=T(S(u)) \in \mathcal C^{2+\alpha}$. Finally, parabolic regularity gives $v\in \mathcal C^{1+\alpha/2,2+\alpha}$ and $(u,v)$ solves \eqref{lin} in the classical sense. 

\end{proof}

\subsection{The non-linear system}

Combining all the results from the previous section we get : 
\begin{thm}
 There exists a  smooth solution $0 \leq \mu u, v \leq 1$ of \eqref{boxed}.
\end{thm}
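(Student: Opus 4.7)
The natural strategy is a monotone Picard iteration built on the linear theory of the previous subsection. I would fix once and for all a constant $k > \mathrm{Lip}(f)$ and, starting from $(u_0, v_0) = (1/\mu, 1)$, define $(u_{n+1}, v_{n+1})$ inductively as the unique smooth solution, provided by the solvability proposition, of the linear system \eqref{lingh} with source terms $g = ku_n$ and $h = f(v_n) + kv_n$, together with the inhomogeneous boundary data $u(0) = 0$, $u(M) = 1/\mu$, $v(0,\cdot) = 0$ prescribed in \eqref{boxed}. Note that the choice of $k$ absorbs the sign of $f'$ through the identity $-u'' + cu' + \mu u - v = 0 \iff -u''+ cu' + (\mu+k)u - v = ku$, and similarly for $v$.

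The first key step is to check that $(\bar u, \bar v) := (1/\mu, 1)$ is a supersolution of \eqref{boxed}: all the equations become $0 \geq 0$ (including the boundary Robin condition $0 + 1 \geq \mu(1/\mu)$ and the bulk equation $0 \geq f(1) = 0$), and the inhomogeneous boundary inequalities $1/\mu \geq 0$, $1/\mu \geq 1/\mu$, $1 \geq 0$ are obvious. The supersolution principle then gives $(u_1, v_1) \leq (\bar u, \bar v)$. Because the maps $s \mapsto ks$ and $s \mapsto f(s) + ks$ are nondecreasing on $[0,1]$, the comparison principle inherited from \eqref{bizarre} gives by induction that the sequence $(u_n, v_n)$ is monotone non-increasing in $n$. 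Simultaneously, the maximum principle of Prop. \ref{maxp} applied at each step guarantees $u_n, v_n \geq 0$, since at each iteration the sources $ku_n, f(v_n) + kv_n$ and the boundary data are non-negative. Therefore $(u_n, v_n)$ converges pointwise to a limit $(u, v)$ with $0 \leq \mu u, v \leq 1$.

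The last step is to upgrade this pointwise convergence to a classical one and pass to the limit in the PDE. Since $ku_n$ and $f(v_n) + kv_n$ are uniformly bounded in $L^\infty$, interior parabolic De Giorgi--Nash estimates on $v_n$ and the elementary elliptic theory on $u_n$ yield uniform local $\mathcal C^\alpha$ bounds; feeding these into \eqref{lingh} and invoking the interior Schauder theory then gives uniform $\mathcal C^{2, \alpha}$ bounds on $u_n$ and $\mathcal C^{1+\alpha/2, 2+\alpha}$ bounds on $v_n$ on compact subsets. By Arzelà--Ascoli combined with the monotonicity already established, the pointwise limit $(u, v)$ is attained in these classical spaces and solves \eqref{boxed} smoothly.

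The step I expect to be technically most delicate is the bootstrap from $L^\infty$-bounded sources to Hölder continuity, because the system couples an elliptic ODE in $x$ with a parabolic equation in $(x,y)$ through the Robin-type boundary condition on $y=0$, so some care is needed to combine the two regularity theories consistently at that interface. The open right boundary $\{x = M\}$ for $v$, on the other hand, poses no difficulty: the $v$-equation is first order in $x$ with \emph{time} running from $0$ to $M$, so no condition is required at $x = M$ and regularity up to $x = M$ follows from the standard Cauchy theory.
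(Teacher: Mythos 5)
Your proposal is correct and follows essentially the same route as the paper: a monotone iteration started from the supersolution $(1/\mu,1)$, built on the linear solvability result and the comparison/maximum principles of the preceding subsection (with $k>\mathrm{Lip}(f)$ absorbing the nonlinearity), a pointwise decreasing limit squeezed between $(0,0)$ and $(1/\mu,1)$, and a regularity bootstrap plus Ascoli to pass to the limit classically. The only cosmetic difference is the bootstrap order (the paper goes $L^\infty \Rightarrow \mathcal C^{1+\alpha/2}$ on $u_n$ via the ODE, then parabolic Schauder on $v_n$, then $\mathcal C^{2+\alpha}$ on $u_n$, rather than invoking De Giorgi--Nash first), which does not change the argument.
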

\begin{proof}
 Use $(0,0)$ and $(1/\mu,1)$ as sub and supersolutions and start an iteration scheme from $(1/\mu,1)$. We get a decreasing sequence bounded from below by $(0,0)$. It converges point wise but the $L^\infty$ bound on $u_n,v_n$ gives a $\mathcal C^{1+\alpha/2}$ bound on $u_n$ which then gives a $\mathcal C^{1+\alpha/2,2+\alpha}$ bound on $v_n$, which then gives a $\mathcal C^{2+\alpha}$ bound on $u_n$. By Ascoli's theorem we can extract from $(u_n,v_n)$ a subsequence that converges to $(u,v) \in \mathcal C^{2+\beta}\times \mathcal C^{1+\beta/2,2+\beta}$. The point wise limit then gives the uniqueness of this limit point and thus that $(u_n,v_n)$ converges to it. Finally, $(u,v)$ has to be a solution of the equation. 
 
 Observe also that the only possible loss of regularity comes from the non-linearity $f$. Actually, if $f$ is of class $\mathcal C^\infty$, by elliptic and parabolic regularity described above, $u$ and $v$ are $\mathcal C^\infty$ too. More precisely, $f \in W^{k+1,\infty}$ implies $(u,v)\in C^{2+k,\alpha}(\mathbb[0,M])\times\mathcal C^{k+1+\alpha/2,k+2+\alpha}(\Omega_{L,M})$.
\end{proof}

We are now interested in sending $M \to +\infty$ to recover the travelling wave observed in the last section. For this, we need to normalise the solution in $\Omega_{L,M}$ in such a way that we do not end up with the equilibrium $(0,0)$ or $(1/\mu,1)$. We trade this with the freedom to chose $c$ : this motivates the investigation of the influence of $c$ on $(u,v)$ as well as a priori properties of $(u,v)$. To this end, we use a sliding method in finite cylinders. Because we apply the parabolic maximum principle on $v$, we will not have to deal with the corners of the rectangle.

\begin{prop}
 If $c > 0$ and $0 \leq \mu u, v \leq 1$ is a classical solutions of \eqref{boxed} then $$u', \partial_x v > 0$$ 
\end{prop}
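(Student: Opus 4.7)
The key observation is that because the $x$-interval is finite and $u$ attains its extremal values $0$ and $1/\mu$ at the two endpoints, the difference between $(u,v)$ and any shift $(u(\cdot+\tau),v(\cdot+\tau,\cdot))$ has automatically non-negative data on its whole parabolic/boundary part—so no full sliding procedure is needed. A single application of the coupled maximum principle on every admissible shift will give non-strict monotonicity, and a standard strong maximum principle step will then upgrade it to strict monotonicity.

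\emph{Step 1: linearisation of the shift.} Fix $\tau\in(0,M)$ and set $w(x):=u(x+\tau)-u(x)$ on $[0,M-\tau]$ and $W(x,y):=v(x+\tau,y)-v(x,y)$ on $[0,M-\tau]\times[-L,0]$. Subtracting the equations of \eqref{boxed} and setting $a(x,y):=[f(v(x+\tau,y))-f(v(x,y))]/[v(x+\tau,y)-v(x,y)]\in L^\infty$ (extended by $f'(v)$ where the denominator vanishes), the pair $(w,W)$ solves the linear system
$$-w''+cw'+\mu w=W(\cdot,0),\qquad c\partial_x W-d\partial_{yy}W-aW=0,$$
coupled by the Robin condition $d\partial_y W(\cdot,0)+W(\cdot,0)=\mu w$ and the Neumann condition $\partial_y W(\cdot,-L)=0$, and with the non-negative boundary/initial data
$$w(0)=u(\tau)\geq 0,\quad w(M-\tau)=1/\mu-u(M-\tau)\geq 0,\quad W(0,y)=v(\tau,y)\geq 0$$
thanks to the a priori bounds $0\leq\mu u\leq 1$ and $v\geq 0$.

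\emph{Step 2: monotonicity.} Apply the combined parabolic/elliptic maximum principle of Proposition~\ref{maxp} to $(w,W)$ to conclude $w\geq 0$, $W\geq 0$. The only point needing a comment is that the zero-order coefficient $-a$ in the $W$-equation is unsigned; this is handled by the usual change of unknown $\widetilde W(x,y):=e^{-\lambda x}W(x,y)$ for $\lambda>|a|_\infty/c$, which produces a non-negative zero-order term $c\lambda-a$ while preserving the Robin coupling along $y=0$. Dividing by $\tau$ and letting $\tau\to 0^+$ yields the non-strict monotonicity $u'\geq 0$ on $(0,M)$ and $\partial_x v\geq 0$ on $\Omega_{L,M}$.

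\emph{Step 3: strict positivity.} Differentiating \eqref{boxed} in $x$, the pair $(u',\partial_x v)$ satisfies a homogeneous linear system of the same structure as in Step~1 with $a$ replaced by $f'(v)\in L^\infty$, and both components are already non-negative by Step~2. If $u'$ or $\partial_x v$ vanished at an interior point, the strong parabolic maximum principle on the $\partial_x v$-equation (no sign on the zero-order term is needed to treat an interior zero minimum, exactly as used in the proof of Proposition~\ref{cmax}), combined with the elliptic strong maximum principle on $u'$ and Hopf's lemma along the Robin boundary $y=0$ and the Neumann boundary $y=-L$, would force both derivatives to vanish identically on a non-trivial sub-rectangle, contradicting $u(0)=0$ and $u(M)=1/\mu$. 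Hence $u'>0$ and $\partial_x v>0$. The main obstacle is the careful bookkeeping of the strong maximum principle across the Robin interface $y=0$, where the elliptic equation on $u'$ and the parabolic equation on $\partial_x v$ communicate; but this is precisely the argument already developed in the proofs of Propositions~\ref{maxp} and~\ref{cmax}, on which we rely.
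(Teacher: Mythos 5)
Your Steps 1 and 3 are fine (Step 3 is essentially how the paper itself concludes), but Step 2 contains a genuine gap: the claim that the unsigned zero-order coefficient $-a$ in the $W$-equation can be neutralised ``by the usual change of unknown $\widetilde W=e^{-\lambda x}W$, $\lambda>|a|_\infty/c$, while preserving the Robin coupling.'' If you weight only $W$, the coupling is \emph{not} preserved: the Robin condition becomes $d\partial_y\widetilde W+\widetilde W=\mu e^{-\lambda x}w$ while the elliptic equation keeps $e^{\lambda x}\widetilde W(\cdot,0)$ as its right-hand side, and the exponential factors are evaluated at different points (the minimum of $\widetilde W$ on $y=0$ versus the minimum point of $w$), so the contradiction in the proof of Proposition \ref{maxp} cannot be reproduced. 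If instead you weight both unknowns, $w=e^{\lambda x}\widetilde w$, the elliptic operator becomes $-\widetilde w''+(c-2\lambda)\widetilde w'+(\mu+c\lambda-\lambda^{2})\widetilde w$. The coupled argument of Proposition \ref{maxp} needs the elliptic zero-order coefficient to dominate the Robin constant $\mu$ (this is exactly what yields $u\geq \min v/(\mu+k)\geq \min v/\mu$ there), which forces $\lambda\leq c$; on the other hand making $c\lambda-a\geq 0$ forces $c\lambda\geq \text{Lip}\, f$. These are compatible only if $c^{2}\geq \text{Lip}\, f$, which you cannot assume: the proposition is stated for every $c>0$, and in the regime where it is later used one has $c_M<\sqrt{\text{Lip}\, f}$ (Proposition \ref{cmaxM}). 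So a one-shot weak comparison between $(u,v)$ and an arbitrary shift is not available: the system is parabolic only in its $v$-component, and the elliptic $u$-component destroys the usual ``exponential in time'' trick.

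This is precisely why the paper does not compare $(u,v)$ with every shift directly but runs a sliding argument in the shift parameter: for $r$ close to $M$ one has $v_r-v>0$ because $v$ is uniformly small near $x=0$ while $m_0=\min_y v(M,y)>0$; one then decreases $r$, and at the critical shift $r_0$ the difference is already non-negative, so only the \emph{strong} maximum principle and Hopf's lemma (which require no sign condition on the zero-order term, unlike the weak comparison you invoke) are needed to get strict positivity and slide a little further, whence $r_0=0$. Your argument would be repaired by replacing Step 2 with this continuity-in-$r$ procedure; as written, the key comparison it relies on is unproved and does not follow from Proposition \ref{maxp}.
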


\begin{proof}
 First observe that $m_0 := \min_{[-L,0]} v(M,y) > 0$ for the same reasons as in Prop. \ref{maxp}. Observe also that $$\lim_{\varepsilon \to 0}\max_{[0,\varepsilon]\times[-L,0]} v = 0$$ thanks to the uniform continuity of $v$. Denote $$v_r(x,y) = v(x+r,y)$$ and $$\Omega_{L,M}^r = [-r,M-r]\times[-L,0]$$ The previous observation asserts that $v_r - v > 0$ on $\Omega_{L,M}^r \cap \Omega_{L,M}$ if $r$ is close enough to $M$. Call $(r_0,M)$ a maximal interval such that for all $r$ in this interval, $v_r - v > 0$ on $\Omega_{L,M}^r \cap \Omega_{L,M}$. We know that such an interval exists by the previous observation. Let us show that $r_0 = 0$ by contradiction. Suppose $r_0 > 0$. By continuity, $v_{r_0} - v \geq 0$. But $(v_{r_0} - v)(0,y) > 0$ and $V: = v_{r_0} - v, U: = u_{r_0} - u$ satisfy on $[0,M-r_0], [0,M-r_0]\times[-L,0]$~:

\begin{equation*}
\begin{tikzpicture}
\draw (-5,0) -- (-5,-3) node[pos=0.5,left] {$V  > 0$};

\draw (-5,0) -- (4,0) node[pos=0.5,below] {\small{$d\partial_y V + V= \mu U$}} node[pos=0.5,above] {$-U'' + cU' + \mu U  = V$} node[pos=-0.05,above] {$U > 0$} node[pos=0.92,above] {$ U(M-r_0) > 0$} ;

\node at (0,-1.5) {$c  \partial_x V - d\partial_{yy} V + \frac{f(v_{r_0}) - f(v)}{v_{r_0} - v}V \geq 0$};

\draw (-5,-3) -- (4,-3) node[pos=0.5,above] {$-\partial_y V = 0$};
\draw[dashed] (4,0) -- (4,-3);
\draw[dashed] (3.2,0) -- (3.2,-3);
\end{tikzpicture}
\end{equation*}
By the mixed elliptic-parabolic strong maximum principle and Hopf's lemma for comparison with $0$ as in prop. \ref{maxp} we know that $v_{r_0} - v > 0$ (we cannot have $v_{r_0} \equiv v$ because then $u(M-r_0) = 1/\mu$ and that is impossible thanks to strong elliptic maximum principle since $r_0 > 0$). Then we may translate a little bit more, since $v_{r_0-\varepsilon} - v$ is continuous in $\varepsilon$, so that $v_{r_0-\varepsilon} - v > 0$, which is a contradiction with the definition of $r_0$.

As a result, $u$ and $v$ are non-decreasing in $x$, that is $u',\partial_x u \geq 0$. Now differentiating the equation with respect to $x$ and applying the same mixed maximum principle as above for comparison with $0$ yields $u', \partial_x v > 0$.
\end{proof}

\begin{prop}
\label{existence_hypo}
 For fixed $c > 0$, there is a unique solution $(u,v)$ of \eqref{boxed} such that $0\leq~\mu u, v \leq~1$. 
\end{prop}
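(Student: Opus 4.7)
Existence is already provided by the preceding theorem, constructed via the monotone iteration starting from the supersolution $(1/\mu, 1)$. So I only need to establish uniqueness, and my plan is a sliding argument modelled on the monotonicity proof just above.

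Let $(u_1, v_1)$ and $(u_2, v_2)$ be two classical solutions satisfying $0 \leq \mu u_i, v_i \leq 1$. For $r \in [0, M]$, I introduce the shifts $u_2^r(x) = u_2(x+r)$ and $v_2^r(x, y) = v_2(x+r, y)$ on $[0, M-r] \times [-L, 0]$. As in the monotonicity proposition, the boundary values $u_2(M) = 1/\mu$ and $v_2(M, y) \geq m_0 > 0$ (the latter by Prop \ref{maxp}), together with $u_1(0) = 0$, $v_1(0, y) = 0$ and uniform continuity, yield $(u_2^r, v_2^r) > (u_1, v_1)$ for $r$ close enough to $M$. I then set
\[
r_0 = \inf\{r > 0 : (u_2^{r'}, v_2^{r'}) \geq (u_1, v_1) \text{ on } [0, M-r'] \times [-L, 0] \text{ for all } r' \in [r, M]\},
\]
and aim to prove $r_0 = 0$, which by symmetric exchange of the two solutions will give equality.

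Assume for contradiction $r_0 > 0$. Setting $U = u_2^{r_0} - u_1$ and $V = v_2^{r_0} - v_1$, the pair $(U, V) \geq 0$ satisfies the linearised mixed elliptic-parabolic system of type \eqref{lin} with bounded reaction coefficient $-\frac{f(v_2^{r_0})-f(v_1)}{v_2^{r_0}-v_1}$ (whose sign is irrelevant for comparison with zero). The crucial strict inputs are $V(0, y) = v_2(r_0, y) > 0$, $U(0) = u_2(r_0) > 0$ and $U(M-r_0) = 1/\mu - u_1(M-r_0) > 0$, all granted by the strict monotonicity in $x$ proved in the preceding proposition and the fact that $r_0 > 0$. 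The alternative $(U, V) \equiv 0$ is ruled out immediately by $V(0, \cdot) > 0$, and the mixed elliptic-parabolic strong maximum principle together with Hopf's lemma, exactly as invoked in Prop \ref{maxp} and in the monotonicity sliding above, then forces $U, V > 0$ strictly on the closed domain $[0, M-r_0] \times [-L, 0]$.

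By compactness this provides a uniform lower bound $U, V \geq c_0 > 0$. Using the boundedness of $\partial_x u_2$ and $\partial_x v_2$, for $\varepsilon > 0$ small the shifts $u_2^{r_0-\varepsilon}, v_2^{r_0-\varepsilon}$ differ from $u_2^{r_0}, v_2^{r_0}$ by $O(\varepsilon)$ on $[0, M-r_0]$, so the inequality $(u_2^{r_0-\varepsilon}, v_2^{r_0-\varepsilon}) \geq (u_1, v_1)$ persists there; on the newly added right strip $[M - r_0, M-r_0+\varepsilon] \times [-L, 0]$, the same lower bound at $x = M-r_0$ combined with continuity of $u_1, v_1$ yields the inequality as well. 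This contradicts the minimality of $r_0$, so $r_0 = 0$. The main obstacle is the treatment of the terminal parabolic boundary $x = M - r_0$, where $v$ has no prescribed data: strict positivity of $V$ there has to come from the strong parabolic maximum principle applied with $x$ as the time variable (noting that the ``final time'' $x = M - r_0$ is not part of the parabolic boundary), coupled with the Robin-type interface condition at $y = 0$ and the strict inequality $U(M - r_0) > 0$ to exclude zeros of $V$ on that interface via Hopf.
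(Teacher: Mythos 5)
Your proof is correct and follows essentially the same route as the paper: existence is taken from the monotone iteration, and uniqueness is obtained by the same sliding argument used for monotonicity, ruling out a contact at $r_0>0$ via the mixed elliptic--parabolic strong maximum principle and Hopf's lemma and then concluding $v_2\ge v_1$ and, by symmetry, equality. You merely supply more detail than the paper (the uniform lower bound on the closed rectangle and the treatment of the newly uncovered right strip when decreasing $r$), and you exclude the degenerate case $V\equiv 0$ via $V(0,\cdot)>0$ where the paper uses $u_1(M-r_0)=1/\mu$; both are valid.
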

\begin{proof}
 The proof is essentially the same as monotonicity. Suppose $(u_1,v_1)$ and $(u_2,v_2)$ are solutions. The same observations as above allow us to translate $v_2$ to the left above $v_1$. Now translate it back until this is not the case any more. We show by contradiction that this never happens : suppose $v_2^{r_0} - v \geq 0$ with $r_0 > 0$. Then $v_2^{r_0} - v > 0$ or $\equiv 0$, but the latter case is not possible since it would give $u(M-r_0) = \frac{1}{\mu}$. Thus $v_2^{r_0} - v > 0$ and we can still translate a bit more, that is a contradiction with the definition of $r_0$ so $r_0 = 0$ and $v_2 \geq v$. By symmetry, $v_2 \equiv v_1$, and then $u_2 \equiv u_1$.
\end{proof}

\begin{prop}
 The function $c \mapsto (u,v)$ is decreasing in the sense that if $0 < c < \bar c$ and $(c,u,v)$ and $(\bar c,\underline u, \underline v)$ solve \eqref{boxed} then $$\underline u < u, \underline v < v$$
\end{prop}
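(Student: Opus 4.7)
The key observation is that, since $u', \partial_x v > 0$ and $c < \bar c$, the pair $(u,v)$ is a \emph{strict} supersolution of the system \eqref{boxed} with velocity $\bar c$:
\[
-u'' + \bar c\, u' + \mu u - v = (\bar c - c)\,u' > 0,\qquad \bar c\,\partial_x v - d\,\partial_{yy} v - f(v) = (\bar c - c)\,\partial_x v > 0,
\]
while the boundary values at $x=0$, $x=M$, $y=-L$ and the Robin coupling at $y=0$ coincide with those of $(\underline u,\underline v)$. Setting $U = u-\underline u$ and $V = v-\underline v$, and linearising $f$ through a bounded coefficient $a(x,y)$ via $f(v)-f(\underline v) = a(x,y)\,V$, the differences satisfy
\begin{align*}
-U'' + \bar c\, U' + \mu U - V(\cdot,0) &= (\bar c - c)\,u' > 0 \quad \text{on } [0,M],\\
\bar c\,\partial_x V - d\,\partial_{yy} V - a(x,y)\,V &= (\bar c - c)\,\partial_x v > 0 \quad \text{on } \Omega_{L,M},
\end{align*}
with $U(0)=U(M)=0$, $V(0,y)=0$, $\partial_y V(\cdot,-L)=0$, and the Robin coupling $d\,\partial_y V + V = \mu U$ at $y=0$. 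The target inequality $U,V > 0$ will be obtained by a mixed elliptic--parabolic maximum principle in the spirit of Proposition \ref{maxp}.

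Performing the standard shift $V \mapsto V e^{-\lambda x}$ for $\lambda$ large absorbs the indefinite sign of $a$ and replaces the parabolic zeroth-order term by a strictly positive one. If $V$ attains a negative infimum, the parabolic strong maximum principle excludes the interior; Hopf's lemma combined with the homogeneous Neumann condition at $y=-L$ excludes that boundary; and the initial value $V(0,\cdot)=0$ excludes $x=0$. The infimum must therefore be attained at some $(x_0,0)$, where Hopf's lemma yields $\partial_y V(x_0,0) < 0$ strictly, and the Robin coupling then gives $\mu U(x_0) < V(x_0,0) < 0$. Thus $U$ has a negative value; since $U(0)=U(M)=0$ it attains a negative minimum at some $x_1 \in (0,M)$, and at that point the elliptic equation yields $V(x_1,0) < \mu U(x_1)$ thanks precisely to the strict supersolution excess $(\bar c - c)\,u'(x_1) > 0$ and to $U''(x_1)\geq 0$. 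Combining $U(x_1) \leq U(x_0)$ and $V(x_0,0) \leq V(x_1,0)$ closes the chain
\[
V(x_0,0) \leq V(x_1,0) < \mu U(x_1) \leq \mu U(x_0) < V(x_0,0),
\]
which is absurd. Hence $V \geq 0$, and then $U \geq 0$ follows from the elliptic comparison principle applied to $-U'' + \bar c\,U' + \mu U \geq V(\cdot,0) \geq 0$ with vanishing endpoints.

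Strict inequality is then a final application of the strong maximum principle: any putative interior zero of $U$ would, via the elliptic equation and the excess $(\bar c - c)\,u' > 0$, force $V(\cdot,0)$ to be negative at that point, contradicting $V \geq 0$; and similarly $(\bar c - c)\,\partial_x v > 0$ together with Hopf at $y=-L,0$ rules out any zero of $V$ outside $\{x=0\}$. \textbf{The main obstacle} is organising the circular coupling at $y=0$ between $U$ and $V$: a negative sign of $V$ there transfers to $U$ via the Robin boundary condition, while the strict supersolution excess on the elliptic equation transfers a sharp converse back to $V(\cdot,0)$, and the two inequalities must be combined with the ordering of the minimum points $x_0$ and $x_1$ to close the loop. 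This excess is precisely what fails when $c=\bar c$, consistently with the uniqueness result of Proposition \ref{existence_hypo}.
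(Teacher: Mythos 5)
Your reduction to the difference system for $U=u-\underline u$, $V=v-\underline v$ is correct, and so are the two local ingredients (Hopf's lemma at a boundary minimum on $y=0$ giving $\mu U(x_0)<V(x_0,0)$, and the elliptic equation at an interior minimum of $U$ giving $V(x_1,0)<\mu U(x_1)$). The gap is in how you close the loop. Since the linearised coefficient $a=\frac{f(v)-f(\underline v)}{v-\underline v}$ has no sign, you rightly pass to $W=Ve^{-\lambda x}$ with $\lambda>\|a\|_\infty/\bar c$ to exclude interior negative minima; but then the point $(x_0,0)$ produced by this argument minimises $W$, not $V$, and minimality of $W$ only yields $V(x_1,0)\geq V(x_0,0)\,e^{\lambda(x_1-x_0)}$. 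When $x_1>x_0$ (both values being negative) this is perfectly compatible with $V(x_1,0)<\mu U(x_1)\leq\mu U(x_0)<V(x_0,0)$, so the chain $V(x_0,0)\leq V(x_1,0)<\mu U(x_1)\leq \mu U(x_0)<V(x_0,0)$ does not close: its first inequality is exactly what the weight destroyed. If instead you insist on the genuine minimum of $V$, the exclusion of interior negative minima fails wherever $a>0$ (e.g.\ where $f$ is increasing between $\underline v$ and $v$). Weighting the elliptic equation as well does not help, because its zeroth-order coefficient becomes $\mu+\bar c\lambda-\lambda^2<0$ for the large $\lambda$ required on the parabolic side (unless $\bar c$ is large, which is not given). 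Note that the paper's Proposition \ref{maxp} closes the analogous loop only thanks to the zeroth-order gap $k>0$ present in \emph{both} equations; your strict excess $(\bar c-c)u'>0$ is not a substitute for that gap.

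The paper proves the proposition by a different route that never meets this difficulty: it observes that $(\underline u,\underline v)$ is a subsolution of \eqref{boxed} with velocity $c$ (because $(c-\bar c)\partial_x\underline v<0$) and repeats the sliding argument of the uniqueness proof (Proposition \ref{existence_hypo}): translate $v$ to the left so that it dominates $\underline v$, slide back to the critical translation, and apply the strong maximum principle and Hopf's lemma to a \emph{nonnegative} difference touching zero --- a situation in which the sign of the linearised coefficient is irrelevant and no exponential weight is needed. If you prefer a non-sliding argument in your spirit, you can instead run the monotone iteration of \eqref{bizarre} with $k>\mathrm{Lip}\,f$ and velocity $\bar c$, starting from $(u,v)$, which your first display shows to be a (strict) supersolution of the $\bar c$-problem: the supersolution and comparison principles give a non-increasing, nonnegative sequence converging to a solution of \eqref{boxed} with velocity $\bar c$ lying below $(u,v)$, and uniqueness (Proposition \ref{existence_hypo}) identifies it with $(\underline u,\underline v)$; your final strong-maximum-principle/Hopf step (which is correct once $U,V\geq 0$ is known) then upgrades the ordering to a strict one.
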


\begin{proof}
 The proof is again the same, just observe that $(\underline u, \underline v)$ is a subsolution of the equation with $c$, thanks to monotonicity since $(c-\bar c)\underline \partial_x v < 0$.
\end{proof}

\subsection{Limits with respect to $c$}
For now, we assert the following properties, which will be enough to conclude this section. Nonetheless, note that the study of solutions of \eqref{boxed} with $c=0$ shows interesting properties. Namely, the solutions are necessarily discontinuous, which implies that the regularisation comes also from the $c\partial_x$ term. This has to be seen in the light of hypoelliptic regularisation in kinetic equations (see \cite{Bou,Hor}). This study will be done in \cite{Mathese}.

\begin{prop}
\label{convchoc}
 Let $(u_c,v_c)$ denote the solution in Prop. \ref{existence_hypo}. Then for every $\alpha \in (0,1)$ $$\lim_{c\to 0} u_c(\alpha M) \geq \alpha$$
\end{prop}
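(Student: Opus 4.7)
The strategy is to bypass any limit-passing in \eqref{boxed} as $c \to 0^+$ (where the equation degenerates) and instead extract, from the $c > 0$ problem, a quantitative concavity property of $u_c$ up to an explicit error of order $c$.

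First I would collapse the two couplings between $u_c$ and $v_c$ into a single identity for $u_c''$. Integrating the equation $c \partial_x v_c - d \partial_{yy} v_c = f(v_c)$ in $y$ on $[-L, 0]$, using the Neumann condition at $y = -L$ together with the Robin-type condition at $y = 0$, yields
\begin{equation*}
v_c(x, 0) - \mu u_c(x) = \int_{-L}^0 f(v_c(x, y)) \, dy - c \, \partial_x \! \int_{-L}^0 v_c(x, y) \, dy.
\end{equation*}
Plugging this into the ODE for $u_c$ gives the pointwise equality
\begin{equation*}
u_c''(x) = c \, E_c'(x) - \int_{-L}^0 f(v_c(x, y)) \, dy, \qquad E_c(x) := u_c(x) + \! \int_{-L}^0 v_c(x, y) \, dy.
\end{equation*}
Because $f \geq 0$, one has $u_c'' \leq c \, E_c'$, so the ``tilted'' function $W_c(x) := u_c(x) - c \int_0^x E_c(s) \, ds$ is concave on $[0, M]$.

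I would then read off the lower bound from this concavity and the boundary data. One has $W_c(0) = 0$, and from the uniform bound $0 \leq E_c \leq L + 1/\mu$ together with $u_c(M) = 1/\mu$ one gets $W_c(M) \geq 1/\mu - cM(L + 1/\mu)$. The chord inequality applied at $\alpha M \in [0, M]$ yields $W_c(\alpha M) \geq \alpha W_c(M)$, hence
\begin{equation*}
u_c(\alpha M) \geq W_c(\alpha M) \geq \frac{\alpha}{\mu} - c \alpha M \! \left( L + \frac{1}{\mu} \right),
\end{equation*}
and sending $c \to 0^+$ gives the claim (the bound actually produced is $\lim u_c(\alpha M) \geq \alpha/\mu$, equivalently $\mu \lim u_c(\alpha M) \geq \alpha$, matching the normalization $u \leq 1/\mu$ built into \eqref{boxed}).

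The only delicate point is the choice of ``tilt'': the naive hope that $u_c$ itself becomes concave as $c \to 0$ ignores the residual transport $c \, \partial_x v_c$ inherited from the $v$-equation, and one must integrate that contribution against an antiderivative of $E_c$ to get a $c$-uniform concavity. Once this observation is in place, the argument is entirely elementary and, crucially, sidesteps any compactness issue in the degenerate limit system.
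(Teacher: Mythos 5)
Your proof is correct, and it takes a genuinely different route from the paper. The paper passes to the limit $c\to 0$ first: using monotonicity in $c$, pointwise convergence, test functions and dominated convergence, it identifies the limit $(u,v)$ as a classical solution of the degenerate system $-u''=v(\cdot,0)-\mu u$, $-d\partial_{yy}v=f(v)$ with the same lateral conditions, then observes that $f\geq 0$ forces $v(x,\cdot)$ to be concave in $y$, hence $\partial_y v(x,0)\leq 0$, hence $-u''\geq 0$, and concludes by the chord between $(0,0)$ and $(M,1/\mu)$. You instead work at fixed $c>0$: integrating the $v$-equation in $y$ and using the two lateral conditions collapses the coupling into the exact identity $u_c''=cE_c'-\int_{-L}^0 f(v_c)\,dy$ with $E_c=u_c+\int_{-L}^0 v_c\,dy$, so the tilted function $W_c=u_c-c\int_0^x E_c$ is concave uniformly in $c$, and the chord inequality plus $0\leq E_c\leq L+1/\mu$ gives the quantitative bound $u_c(\alpha M)\geq \alpha/\mu-c\alpha M(L+1/\mu)$. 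This sidesteps all the limit-identification and regularity work (the existence of $\lim_{c\to 0}u_c(\alpha M)$ itself is already supplied by the monotonicity in $c$ proved earlier, so your liminf bound suffices) and even yields an explicit $O(c)$ rate; what it does not give is the structural description of the $c=0$ limit system, which the paper's longer argument provides and which feeds its later discussion of the discontinuity and hypoelliptic regularisation. Note also that, exactly like the paper's own chord argument, your bound is $\alpha/\mu$ rather than $\alpha$, consistent with the normalisation $\mu u_c\leq 1$ implicitly intended in the statement, and you flag this correctly.
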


\begin{proof}
 By monotonicity we already know that $(u_c,v_c)$ converges point wise as $c\to 0$ to some $(u,v)$. Since $0 \leq v_c \leq 1$, by uniqueness of the limit and Ascoli's theorem we know that $u$ is in $W^{2,\infty} \hookrightarrow \mathcal C^{1,1/2}$.

 Multiplying the equation satisfied by $v_c$ by a test function $\phi(y) \in \mathcal C^\infty_c(-L,0)$, integrating, and then multiplying by $\psi(x) \in \mathcal C^\infty_c(0,M)$ and integrating again, yields after passing to the limit $c\to 0$ thanks to dominated convergence : for all $x\in (0,M)$
$$ -d\int_{-L}^0 v\phi''(y) \mathrm dy = \int_{-L}^0 f(v(x,y)) \phi(y) \mathrm dy $$
i.e. $v(x,\cdot)$ satisfies $-v(x,\cdot)'' = f(v) \in L^\infty$ in the sense of distributions, so that $v(x,\cdot) \in W^{2,\infty} \hookrightarrow \mathcal C^{1,1/2}$, so that actually $f(v) \in \mathcal C^1$ and these equations are satisfied in the classical sense. Moreover $v(x,\cdot)'$ is bounded and by concavity, $v_y(-L)$ exists and is seen to be necessarily $0$ by using a test function $\phi$ whose support intersects $y=-L$ and integrating by parts.

At this point, $v(x,\cdot) \in \mathcal C^2(-L,0)\cap \mathcal C^1([-L,0])$ and satisfies $-d\partial_{yy} v(x,y) = f(v(x,y))$ and $\partial_y v(-L)=0$. Using now a test function $\phi$ whose support intersects $y=0$ and integrating by parts we obtain $d \partial_y v(x,0)= \mu u(x) - v(x,0)$.

Moreover, $u(0) = v(0,y) = 0$, $u(M) = 1/\mu$, $v$ is non-decreasing with respect to the $x$ variable so differentiable a.e. and continuous up to a countable set (which in fact is of cardinal $1$, $2$ or $3$).

Finally, passing to the limit in the equation for $u$ yields that $u \in \mathcal C^1$ satisfies $-u'' = \mu u - v$ in the sense of distributions, so that we have the following picture :

\begin{equation}
\label{boxedlimit0}
\begin{tikzpicture}
\draw (-5,0) -- (-5,-3) node[pos=0.5,left] {$v  = 0$};

\draw (-5,0) -- (4,0) node[pos=0.5,below] {\small{$d\partial_y v = \mu u - v$}} node[pos=0.5,above] {$-u'' = v - \mu u$} node[pos=-0.05,above] {$u=0$} node[pos=1,above] {$u=1/\mu$} ;

\node at (0,-1.5) {$- d\partial_{yy} v = f(v)$};

\draw (-5,-3) -- (4,-3) node[pos=0.5,above] {$-\partial_y v  = 0$};

\draw[dashed] (4,0) -- (4,-3) node[pos=0.5,right] {$v  = ?$};
\ptn(-5,0)(1.5);\ptn(4,0)(1.5);

\end{tikzpicture}
\end{equation}
Notice that since $f \geq 0$, $v(x,\cdot)$ is concave, so that $d\partial_y v(x,0) \leq 0$, i.e. $v-\mu u \geq 0$ so that $u$ is concave. As a consequence, it is over the chord between $(0,0)$ and $(M,1/\mu)$ which is the desired conclusion.

\end{proof}

\begin{prop}
 $$\lim_{c\to +\infty} u_c = 0\text{ pointwise on }[0,M[$$
\end{prop}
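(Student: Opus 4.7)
The plan is to combine the monotonicity of $c \mapsto u_c$ with an explicit linear barrier that concentrates near $x = M$. Since $u_c$ is nonincreasing in $c$ (by the previous proposition) and bounded below by $0$, the pointwise limit $u_\infty(x) := \lim_{c\to\infty} u_c(x)$ exists on $[0,M]$, so it suffices to produce a supersolution $\bar u_c$ with $u_c \le \bar u_c$ such that $\bar u_c(x) \to 0$ for every $x \in [0,M[$.

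I would decouple the problem from $v_c$ by using only the trivial bound $v_c(\cdot,0) \le 1$ and define $\bar u_c$ as the unique classical solution of the linear boundary value problem
\begin{equation*}
-\bar u_c'' + c\bar u_c' + \mu \bar u_c = 1 \text{ on } (0,M), \qquad \bar u_c(0) = 0, \quad \bar u_c(M) = 1/\mu.
\end{equation*}
Setting $w_c := \bar u_c - u_c$, one has $-w_c'' + cw_c' + \mu w_c = 1 - v_c(\cdot,0) \ge 0$ with $w_c(0) = w_c(M) = 0$; since the zero-order coefficient $\mu$ is positive, the standard one-dimensional maximum principle yields $w_c \ge 0$, i.e. $u_c \le \bar u_c$.

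It remains to show $\bar u_c(x) \to 0$ for $x \in [0,M[$. With the characteristic roots $\lambda_\pm(c) = \tfrac{1}{2}\bigl(c \pm \sqrt{c^2+4\mu}\bigr)$, the explicit formula reads $\bar u_c(x) = \tfrac{1}{\mu} + A_c e^{\lambda_+(c) x} + B_c e^{\lambda_-(c) x}$, and solving the two-by-two linear system coming from the boundary conditions yields $B_c = -\tfrac{1}{\mu}\bigl(1 - e^{-(\lambda_+-\lambda_-)M}\bigr)^{-1}$ and $A_c = -B_c e^{-(\lambda_+-\lambda_-)M}$. As $c \to \infty$, $\lambda_+(c) \sim c \to +\infty$ and $\lambda_-(c) \sim -\mu/c \to 0^-$, hence $B_c \to -\tfrac{1}{\mu}$ and $A_c \to 0$. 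For fixed $x \in [0,M[$, $|A_c e^{\lambda_+(c) x}| = O\bigl(e^{\lambda_+(c)(x-M) + \lambda_-(c) M}\bigr) \to 0$ since $x-M < 0$ drives the exponent to $-\infty$, while $B_c e^{\lambda_-(c) x} \to -\tfrac{1}{\mu}$. Summing, $\bar u_c(x) \to 0$ on $[0,M[$, which forces $u_c(x) \to 0$ there.

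There is no real obstacle beyond this boundary-layer computation: the only point to keep in mind is that the coupling with $v_c$ must be broken, which is done by retaining only the crude bound $v_c \le 1$. The restriction to $[0,M[$ is moreover sharp, since $u_c(M) = 1/\mu$ is fixed along the sequence and a boundary layer of thickness $O(1/c)$ forms at $x = M$.
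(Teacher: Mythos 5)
Your proof is correct, but it takes a genuinely different route from the paper. The paper handles $c\to+\infty$ exactly as it handled $c\to 0$: it uses the monotone pointwise limit, integrates the system against test functions and passes to the limit, identifying the limit as a solution of the degenerate system \eqref{boxedlimitinf} in which $u'=0$ and $\partial_x v=0$; combined with $u(0)=0$ (using that $u_c'(0)$ is non-negative and decreasing in $c$) this forces $u\equiv 0$ on $[0,M)$, the Dirichlet condition at $x=M$ being lost in the limit, which the paper corroborates by an integral identity showing $u_c'(M)\to+\infty$. You instead decouple $u_c$ from $v_c$ through the crude bound $v_c(\cdot,0)\le 1$ and compare with the explicit solution $\bar u_c$ of $-\bar u_c''+c\bar u_c'+\mu\bar u_c=1$, $\bar u_c(0)=0$, $\bar u_c(M)=1/\mu$; the one-dimensional maximum principle (valid since $\mu>0$) gives $0\le u_c\le\bar u_c$, and your computation of $A_c$, $B_c$ with $\lambda_+(c)\sim c$, $\lambda_-(c)\sim-\mu/c$ is correct, so $\bar u_c(x)\to 0$ for each $x<M$ — note this barrier argument even makes the monotonicity in $c$ unnecessary. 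What each approach buys: yours is more elementary and quantitative, giving an explicit bound exhibiting the boundary layer of width $O(1/c)$ at $x=M$ and avoiding any compactness or distributional limit; the paper's argument is softer but runs in parallel with the $c\to 0$ analysis (reusing the same limiting machinery) and also describes the limiting behaviour of $v_c$, which your barrier says nothing about.
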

\begin{proof}
 Applying the exact same method as above we obtain as $c\to +\infty$, $(u,v)$ a classical solution of 

\begin{equation}
\label{boxedlimitinf}
\begin{tikzpicture}
\draw (-5,0) -- (-5,-3) node[pos=0.5,left] {$v  = 0$};

\draw (-5,0) -- (4,0) node[pos=0.5,below] {\small{$d\partial_y v = \mu u - v$}} node[pos=0.5,above] {$u' = 0$} node[pos=-0.05,above] {$u=0$} node[pos=1,above] {$u=1/\mu$} ;

\node at (0,-1.5) {$ \partial_x v = 0$};

\draw (-5,-3) -- (4,-3) node[pos=0.5,above] {$-\partial_y v  = 0$};
\draw[dashed] (4,0) -- (4,-3) node[pos=0.5,right] {$v  = ?$};
\ptn(-5,0)(1.5);\ptn(4,0)(1.5);

\end{tikzpicture}
\end{equation}
Now just observe that $u_c'(0)$ is decreasing with respect to $c$ and non-negative, so that $u$ can be extended in a $\mathcal C^1$ way on $[0,M)$ by the Cauchy criterion : we end-up with $u \equiv 0$ on $[0,M)$. We observe that $u$ is thus necessarily discontinuous at $x=M$, which is consistent with the limit $c\to \infty$ in the integration by parts of the equation on $(u_c,v_c)$ :

 $$c\left(\frac{1}{\mu} + \int_{-L}^0 v_c(M,y) \mathrm dy\right) = \int_{\Omega_{L,M}} f(v_c) + u_c'(M) - u_c'(0)$$
since the left-hand side goes to $+\infty$ and everything except $u_c'(M)$ is bounded by above in the right-hand side.

\end{proof}

\subsection{Limit as $M\to\infty$}
We now call $\theta' = \frac{1+\theta}{2}$, $\theta'' = \frac{2+\theta}{3}$ and chose $c = c_M$ such that $u_{M}(\theta'' M) = \theta'$. We are now interested in compactness on $c_M$ to pass to the limit $M\to+\infty$ in the equations. From now on we change the coordinate $x$ by $x-\theta' M$ so that $u_M, v_M$ are resp. defined on $$\Omega_M := [-\theta'' M, (1-\theta'') M]$$ $$\Omega_{L,M} := [-\theta'' M, (1-\theta'') M]\times [-L,0]$$ and $$u_M(0) = \theta'$$

\begin{prop}
\label{cmaxM}
 For $M$ large enough, $c_M < \sqrt{\text{Lip} f}$
\end{prop}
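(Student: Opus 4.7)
The strategy is to argue by contradiction, adapting the exponential supersolution trick of Proposition \ref{cmax} to the finite cylinder. Suppose there is a sequence $M_n \to +\infty$ along which $c^2_{M_n} \geq \text{Lip} f$; abbreviate $c = c_{M_n}$. In the shifted coordinates the problem lives on $[-\theta''M,(1-\theta'')M]\times[-L,0]$ with Dirichlet data $u=v=0$ at the left endpoint and $u=1/\mu$ at the right endpoint. I would try the exponential pair
$$\bar v(x,y) = A e^{cx}, \qquad \mu\bar u(x) = A e^{cx},$$
and choose $A = e^{-c(1-\theta'')M}$, so that $\bar u = 1/\mu$ exactly at the right endpoint.

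Because $\bar v$ is $y$-independent, the Robin condition on $y=0$ and the Neumann condition on $y=-L$ are identities, the equation on the line reduces to $-\bar u'' + c\bar u' + \mu\bar u - \bar v = 0$ (trivially, as $\mu\bar u = \bar v$ and $-c^2+c^2=0$), and both values at the left endpoint are positive. The only substantive condition is the interior inequality
$$c\partial_x \bar v - d\partial_{yy}\bar v - f(\bar v) = c^2 A e^{cx} - f(A e^{cx}) \geq (c^2 - \text{Lip} f)\,A e^{cx} \geq 0,$$
which holds thanks to $f(0)=0$, the Lipschitz estimate $f(z) \leq (\text{Lip} f)\,z$ for $z \geq 0$, and the contradiction hypothesis $c^2 \geq \text{Lip} f$. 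Thus $(\bar u,\bar v)$ is a supersolution of \eqref{boxed}. Running the monotone scheme based on \eqref{bizarre} starting from $(\bar u,\bar v)$, as in the existence theorem, produces a decreasing sequence converging to a solution of \eqref{boxed}; the uniqueness statement of Proposition \ref{existence_hypo} then identifies this limit with $(u_M,v_M)$, giving the pointwise bound $u_M \leq \bar u$.

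Finally, evaluating at $x=0$, the normalization $u_M(0)=\theta'$ yields
$$\theta' = u_M(0) \leq \bar u(0) = \frac{1}{\mu}\, e^{-c(1-\theta'')M},$$
equivalently $c(1-\theta'')M \leq -\ln(\mu\theta')$. Since $1-\theta'' = (1-\theta)/3 > 0$ and $c \geq \sqrt{\text{Lip} f} > 0$ are both bounded away from zero, the left-hand side diverges along $M_n \to +\infty$ while the right-hand side is a fixed constant, which is the desired contradiction. The only delicate point in this plan is that the supersolution principle as stated in the paper compares two solutions of the linearised problem \eqref{bizarre} rather than the nonlinear one; upgrading it to $u_M \leq \bar u$ is what forces us to go through the monotone iteration together with uniqueness for \eqref{boxed}, and this is really the main (minor) obstacle.
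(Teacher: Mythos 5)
Your argument is correct, but it reaches the conclusion by a different mechanism than the paper. The paper uses the same exponential pair (there written $\bar u=e^{rx}$, $\bar v=\mu e^{rx}$ with $r=c_M$) but compares it with $(u_M,v_M)$ by a sliding argument: translate the supersolution, locate the first contact point, and exclude contact via the strong elliptic/parabolic maximum principles and Hopf's lemma, the delicate step being the right endpoint, which is handled by a level-line comparison valid once $M\geq \frac{2}{\sqrt{\mathrm{Lip}f}}\ln(1/\mu)$. You instead anchor the exponential at the right endpoint ($A=e^{-c(1-\theta'')M}$, so $\bar u((1-\theta'')M)=1/\mu$, $\mu\bar u=\bar v\leq 1$ on the rectangle), and obtain the global comparison $u_M\leq\bar u$ not by sliding but by running the monotone scheme of the existence theorem from $(\bar u,\bar v)$ and invoking the uniqueness of Proposition \ref{existence_hypo} to identify the decreasing limit with $(u_M,v_M)$; you correctly spotted that the supersolution principle alone (which compares with the linearised problem \eqref{bizarre}) does not directly give this, and your fix is sound since all iterates stay between $(0,0)$ and $(\bar u,\bar v)$, so the limit satisfies $0\leq\mu u,v\leq 1$ and falls under the uniqueness statement. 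The contradiction then comes quantitatively from the normalisation at $x=0$: $\theta'\leq\frac{1}{\mu}e^{-c(1-\theta'')M}$ is impossible once $c(1-\theta'')M$ is large, which happens under $c\geq\sqrt{\mathrm{Lip}f}$ and $M\to\infty$ (and the same computation goes through if the normalisation is read as $\mu u_M(0)=\theta'$). What the two routes buy: the paper's sliding is self-contained within the comparison machinery and yields an explicit threshold in $\mu$ and $\mathrm{Lip}f$; yours outsources the hard pointwise comparison to the already-proved uniqueness result (itself proved by sliding), is shorter, gives a threshold depending also on $\theta$, and delivers the strict inequality $c_M<\sqrt{\mathrm{Lip}f}$ directly, whereas the paper's final line only states $c_M\leq\sqrt{\mathrm{Lip}f}$.
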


\begin{proof}
 Since we do not know how the level lines $\{ v_M = \theta \}$ behave, we cannot apply the argument of Proposition \ref{cmax}. Nonetheless, this is counterbalanced by taking advantage of being in a rectangle. We look for 
$$\bar u(x) = e^{rx}, \bar v(x,y) = \mu e^{rx}$$ to solve \eqref{boxed} with the $=$ signs replaced by $\geq$. A direct computation yields $$-r^2 + c_M r \geq 0, c_M r \geq \text{Lip} f$$ so the best choice is $$r = c_M, c_M \geq \sqrt{\text{Lip} f}$$ So now suppose $$c_M \geq \sqrt{\text{Lip} f}$$ and set 
$$t_0 = \inf \{ t\in \mathbb R \mid \bar u(t+\cdot) - u_M) > 0\text{ on } \Omega_M \textbf{ and } \bar v(t + \cdot,y) - v_M > 0 \text{ on } \Omega_{L,M} \}$$
This infimum exists as it is taken over a set that is non-void and bounded by below (using the limits of $e^{rx}$ and the bounds on $u,v$). By continuity $$\bar u(t_0 + \cdot) - u_M, \bar v(t_0 + \cdot,y) - v_M \geq 0$$ and 

\begin{equation*}
 \begin{split}
\text{There exists } x_0 \in \Omega_M &\text{ s.t. }\bar u(t_0 + x_0) - u(x_0) = 0 \\ &\textbf{or} \\
\text{There exists }(x_0',y_0') \in \Omega_{L,M} &\text{ s.t. }\bar v(t_0 + x_0',y_0') - v(x_0',y_0') = 0
 \end{split}
\end{equation*}
Since $u = v = 0$ at the left boundaries, $x_0, x_0' > -\theta'' M$. Thanks to the normalisation condition, the first case is impossible, since $\bar u(t_0 + \cdot) - u_M$ satisfies the strong elliptic maximum principle with non-negative boundary values and data. Indeed, the only thing to check is that 
$$\bar \mu u(t_0+(1-\theta'') M) \geq \mu u_M((1-\theta'') M) = 1 $$
This is obtained provided $M \geq \frac{2}{\sqrt{\text{Lip} f}} \ln\left(\frac{1}{\mu}\right)$ : the level lines $\theta'$ should touch before the level lines $1$ since 

\begin{equation*}
\begin{split}
 | \mu u_M^{-1}(\theta') - \mu \bar u^{-1}(\theta') | &= \frac{1}{c} \ln\left(\frac{\theta'}{\mu}\right) \\
						     &< \frac{1}{c} \ln\left(\frac{1}{\mu}\right) \\
						     &< M - \frac{1}{c} \ln\left(\frac{1}{\mu}\right) \\
						     &= |\mu u_M^{-1}(1) - \mu \bar u^{-1}(1)|
\end{split}
\end{equation*}

The second case is impossible also, by using the strong parabolic maximum principle and Hopf's lemma as usual.
In every case, there is a contradiction so that in the end 

$$c_M \leq \sqrt{\text{Lip} f}$$

\end{proof}

\begin{prop}
 There exists $c_- > 0$ that does not depend on $M$ s.t. $c_M > c_-$
\end{prop}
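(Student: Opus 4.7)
I proceed by contradiction: suppose there is a sequence $M_n \to +\infty$ with $c_n := c_{M_n} \to 0$, and let $(u_n, v_n)$ denote the associated solutions on $\Omega_{L, M_n}$, so that (after the coordinate shift) $\mu u_n(0) = \theta'$. The plan is to extract a subsequential limit $(u_\infty, v_\infty)$ defined on the whole strip $\mathbb R \times [-L, 0]$ and to exploit the rigidity of the limiting degenerate system on the line to force $v_\infty \equiv \theta'$, which will contradict $f(\theta') > 0$.

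\emph{Compactness.} Elliptic Schauder applied to $-u_n'' + c_n u_n' + \mu u_n = v_n(\cdot, 0) \in [0,1]$ yields locally uniform $C^{2+\alpha}$ bounds on $u_n$, so along a subsequence $u_n \to u_\infty$ in $C^2_{\mathrm{loc}}(\mathbb R)$. The compactness of $(v_n)$ is more delicate: the parabolic Schauder estimates of Lemma \ref{regulparab} degenerate as $c_n \to 0$. However, $v_n \in [0,1]$ is monotone non-decreasing in $x$, so $\int_{-A}^A \partial_x v_n \, \mathrm dx \leq 1$ on every compact, and Helly's theorem together with a diagonal extraction produces pointwise convergence of $v_n(\cdot, y)$ in $x$ for each $y$. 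Passing to the limit in the $y$-ODE $-d \partial_{yy} v_n = f(v_n) - c_n \partial_x v_n$ slice by slice in $x$ upgrades this to a classical limit $v_\infty(x, \cdot)$ in $y$. The limit $(u_\infty, v_\infty)$ then satisfies
\begin{align*}
&-u_\infty'' + \mu u_\infty = v_\infty(\cdot, 0), \qquad -d \partial_{yy} v_\infty = f(v_\infty), \\
&d \partial_y v_\infty(\cdot, 0) + v_\infty(\cdot, 0) = \mu u_\infty, \qquad \partial_y v_\infty(\cdot, -L) = 0,
\end{align*}
with $u_\infty' \geq 0$, $0 \leq \mu u_\infty, v_\infty \leq 1$ and $\mu u_\infty(0) = \theta'$.

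\emph{Rigidity.} For each fixed $x$, $v_\infty(x, \cdot)$ solves a second-order ODE with $-d v_\infty'' = f(v_\infty) \geq 0$ and $\partial_y v_\infty(x, -L) = 0$, hence is concave and non-increasing in $y$; in particular $\partial_y v_\infty(x, 0) \leq 0$, and the boundary condition at $y = 0$ then yields $v_\infty(x, 0) \geq \mu u_\infty(x)$. Substituting into the $u_\infty$-equation,
\[ u_\infty'' = \mu u_\infty - v_\infty(\cdot, 0) \leq 0, \]
so $u_\infty$ is concave. But any bounded, non-decreasing, concave function on the whole line must be constant: $u_\infty'$ is nonnegative and non-increasing, and boundedness of $u_\infty$ forces $u_\infty'(x) \to 0$ as $x \to +\infty$, hence $u_\infty' \equiv 0$. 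Therefore $\mu u_\infty \equiv \theta'$, which forces $v_\infty(\cdot, 0) \equiv \theta'$ and $\partial_y v_\infty(\cdot, 0) = 0$; together with $\partial_y v_\infty(\cdot, -L) = 0$ and concavity of $v_\infty(x, \cdot)$, this forces $v_\infty \equiv \theta'$ on the entire strip, so $f(\theta') = 0$. But $\theta' = (1+\theta)/2 \in (\theta, 1)$, where $f > 0$ by Assumption \ref{asf} --- the desired contradiction.

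The hard part is the compactness for $(v_n)$: the parabolic estimates used in Section \ref{equiv} are not uniform as $c_n \to 0$, so one has to exploit the monotonicity of $v_n$ in $x$ and the ODE structure in $y$ directly to produce a classical limit, in a spirit analogous to the treatment of the $c \to 0$ limit in Proposition \ref{convchoc} for fixed $M$. The rigidity step is then a soft maximum-principle argument that benefits crucially from working on all of $\mathbb R$, where a bounded non-decreasing concave function is forced to be constant --- a phenomenon absent in the finite rectangle and which is precisely what the normalisation $u_M(\theta'' M) = \theta'$ is tailored to exploit.
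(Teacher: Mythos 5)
Your overall architecture is the same as the paper's (contradiction, limit along $c_n\to 0$, limiting degenerate system, concavity rigidity with the normalisation at $x=0$, contradiction with $f(\theta')>0$), and your rigidity step is correct and essentially identical to the paper's: concavity of $v_\infty(x,\cdot)$ from $f\ge 0$ and the Neumann condition give $v_\infty(\cdot,0)\ge\mu u_\infty$, hence $u_\infty$ is bounded and concave on all of $\mathbb R$, hence constant, and the normalisation forces $v_\infty\equiv\theta'$, contradicting $f(\theta')>0$.

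The genuine gap is exactly where you flag "the hard part" and then do not close it: the compactness of $(v_n)$ and the passage to the limit. Helly plus a diagonal extraction gives pointwise convergence in $x$ only for a countable set of $y$'s; to get a limit defined and classical in $y$, and above all to recover the Robin condition $d\partial_y v_\infty(\cdot,0)+v_\infty(\cdot,0)=\mu u_\infty$ and the Neumann condition at $y=-L$, you need some uniform control of $\partial_y v_n$, which you never establish. Moreover, your "slice by slice" limit in the ODE $-d\partial_{yy}v_n=f(v_n)-c_n\partial_x v_n$ is not justified: $c_n\partial_x v_n$ is not known to tend to $0$ (nor even to be bounded) pointwise on a fixed slice $\{x=\mathrm{const}\}$; monotonicity only controls it after integration in $x$ ($c_n\int_{x_1}^{x_2}\partial_x v_n\,dx\le c_n\to 0$). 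The paper's missing ingredient is the rescaling $\mathfrak u_n(x)=u_n(c_nx)$, $\mathfrak v_n(x,y)=v_n(c_nx,y)$, which turns the system into a nondegenerate parabolic one with coefficients independent of $n$; interior elliptic $W^{2,p}$/Schauder bounds on $\mathfrak u_n$ and parabolic Schauder bounds on $\mathfrak v_n$ then give $|\partial_y v_n|_{L^\infty}\le C$ uniformly in $n$, which yields equicontinuity of $y\mapsto v_n(\cdot,y)$ in $L^1_{loc}$ and Ascoli-type compactness; the limiting equations and both boundary conditions are then derived in the distributional sense by testing against products $\phi(y)\psi(x)$ (as in Proposition \ref{convchoc}, where the troublesome term disappears because $c_n\iint v_n\,\phi\,\psi'\to 0$), followed by an elliptic bootstrap in $y$. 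Without this (or an equivalent substitute), your extraction of a classical limit solving the system \eqref{absurdeconcave}, with its boundary conditions, is not established, and the rigidity step has nothing to apply to.
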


\begin{proof}
We argue by contradiction by supposing $\inf c(M) = 0$. Then there exists $M_n \to \infty$ (by continuity of $c(M)$) such that $c_{M_n} =: c_n \to 0$. Denote $u_n,v_n$ the associated normalised sequence of solutions. Since $u_n$ is uniformly in $\mathcal C^{1,\beta}$ we extract from it a subsequence that converges in $\mathcal C^{1,\alpha}$. We now assert the following :

\begin{center}
For every $A>0$, $(v_n(y)(x))_n$ is equicontinuous and bounded in $\mathcal C( (-L,0), L^1(-A,A))$
\end{center}
The boundedness comes directly from the fact that $v_n(x,y) \in [0,1]$ is increasing, thus it is bounded uniformly in $n$ and $y$ in $BV(-A,A)$ which is compactly embedded in  $L^1(-A,A)$. For the equicontinuity, we have
$$\int_{-A}^A | v_n(y,x) - v_n(y+\varepsilon,x) | dx \leq \int_{-A}^A \left(\int_y^{y+\varepsilon} \left|\partial_y v_n(x,s) \right| ds \right) dx$$
so that a  uniform bound on $\partial_y v_n$ will suffice. This bound is classical and comes from parabolic regularity after rescaling, but let us give it here for the sake of completeness. Consider $\mathfrak u_n(x) = u(c_nx)$ and $\mathfrak v_n(x,y) = v(c_nx,y)$ so that with the new variables, $x\in (- \frac{\theta' M_n}{c_n},\frac{(1-\theta') M_n}{c_n})$ and $\mathfrak u, \mathfrak v$ satisfy

\begin{equation}
\label{rescaleparab}
\begin{tikzpicture}
\draw (-5,0) -- (-5,-3) node[pos=0.5,left] {$\mathfrak v  = 0$};

\draw (-5,0) -- (4,0) node[pos=0.5,below] {\small{$d\partial_y \mathfrak v = \mu \mathfrak u - \mathfrak v$}} node[pos=0.5,above] {$-\mathfrak u'' = \mathfrak v - \mu \mathfrak u$} node[pos=-0.05,above] {$ \mathfrak u=0$} node[pos=1,above] {$\mathfrak u=1/\mu$} ;

\node at (0,-1.5) {$ \partial_x \mathfrak v - d\partial_{yy} \mathfrak v = f(\mathfrak v)$};

\draw (-5,-3) -- (4,-3) node[pos=0.5,above] {$-\partial_y \mathfrak v  = 0$};
\draw[dashed] (4,0) -- (4,-3) node[pos=0.5,right] {$\mathfrak v  = ?$};
\ptn(-5,0)(1.5);\ptn(4,0)(1.5);

\end{tikzpicture}
\end{equation}
Now we reduce to a local estimate :
\begin{equation*}
\begin{split}
|\mu \mathfrak u_n|_{\mathcal C^{1,\alpha}(-A,A)} &=  |\mu \mathfrak u_n|_{L^\infty(-A,A)} +  |\mu \mathfrak u_n'|_{L^\infty(-A,A)} + \sup_{x\neq y} \frac{ |\mu \mathfrak u_n'(x)-\mu \mathfrak u_n'(y)| }{|x-y|^\alpha}  \\
&\leq |\mu \mathfrak u_n|_{L^\infty(-A,A)} + 3|\mu \mathfrak u_n'|_{L^\infty(-A,A)} + \sup_{|x-y| < 1} \frac{ |\mu \mathfrak u_n'(x)-\mu \mathfrak u_n'(y)| }{|x-y|^\alpha} \\
&\leq |\mu \mathfrak u_n|_{L^\infty(-A,A)} + 4\sup_{x_0\in(-A,A)} |\mu \mathfrak u_n|_{\mathcal C^{1,\alpha}(B_1(x_0))}
\end{split}
\end{equation*}
And finally, $|\mu \mathfrak u_n|_{L^\infty(-A,A)} \leq 1$ as well as 

\begin{equation*}
  \begin{split}
|\mu \mathfrak u_n|_{\mathcal C^{1,\alpha}(B_1(x_0))} &\leq C_0 |\mu \mathfrak u_n|_{W^{2,p}(B_1(x_0))} \\ 									 &\leq C_0C_1\left( |\mu \mathfrak u_n|_{L^p(B_1(x_0))} + |\mathfrak v_n|_{L^p(B_1(x_0))} \right) \\ 
							  &\leq 2C_0C_1
  \end{split}
\end{equation*}
thanks to the Sobolev inequality, the standard $W^{2,p}$ estimates with $p=1/(1-\alpha)$, and $0 \leq \mu u \leq 1$, so that in the end 
$$|\mu \mathfrak u_n|_{\mathcal C^{1,\alpha}(-A,A)} \leq 1 + 8C_0C_1$$
Finally, we plug this in the classical Schauder parabolic estimate up to the boundary to get 
$$|\mathfrak v_n|_{\mathcal C^{1+\alpha/2, 2+\alpha}(-A,A)} \leq C_3\left(|\mathfrak v_n|_{L^\infty(-A,A)} + |\mathfrak \mu u_n|_{\mathcal C^{1+\alpha/2}(-A,A)}\right) \leq C_3(2 + 8C_0C_1)$$
even independently from $A$.
So that in the end $$| \partial_y v_n |_{L^\infty((-A,A)\times(-L,0))} = | \partial_y \mathfrak v_n |_{L^\infty((-A/c_n,A/c_n)\times(-L,0))} \leq C_3(2 + 8C_0C_1) $$

The fact is now proved, and thanks to Ascoli's theorem and a diagonal extraction, we can extract from $u_n,v_n$ some $u,v$ that converges in $\mathcal C( (-L,0), L^1(-n,n) )$ for every $n \in \mathbb N$. Just as in the previous computations by integrating by parts, we get that $u,v$ ends up to be a classical solution of (since $M_n/c_n \to \infty$) 
\begin{equation}
\label{absurdeconcave}
  \begin{tikzpicture}
  \draw (-6,0) -- (6,0) node[pos=0.5,below] {\small{$d\partial_y v = \mu u - v(x,0)$}} node[pos=0.5,above] {$-u'' = v(x,0) - \mu u$};

  \node at (0,-1.5) {$- d\partial_{yy} v = f(v)$};

  \draw (-6,-3) -- (6,-3) node[pos=0.5,above] {\small{$-\partial_y v= 0$}};


  \end{tikzpicture}
\end{equation}
along with $\mu u(0) = (1+\theta)/2$. But this is impossible : indeed, $u$ is bounded and thanks to $f\geq 0$, $v$ is concave on each $y$-slice, which gives that $u$ is also concave, on the whole $\mathbb R$ so it is constant. Thanks to the normalisation condition, $\mu u \equiv (1+\theta)/2$, so that $v(x,0) \equiv \mu u \equiv (1+\theta)/2$, so that $\partial_y v(x,0) \equiv 0$, but then by concavity and the Neumann condition, $v \equiv (1+\theta)/2$ which is a contradiction with $f((1+\theta)/2) > 0$.

\end{proof}

We can now pass to the limit $M\to\infty$ in the equations and prove Theorem \ref{degen}. 

\begin{proof}
 Taking $M_n \to +\infty$, thanks to the bounds on $c_{M_n}$ we can extract from it a subsequence converging to some $c > 0$. We can also use the elliptic-parabolic regularity discussed in the beginning to extract from $(u_{M_n},v_{M_n})$ some subsequence that converges in $\mathcal C^{1+\alpha/2,2+\alpha}_{loc}$ to some $(u,v)$ that solve the equations in \eqref{parab}. Bounds and monotonicity are inherited from the $\mathcal C^1$ limit. The last thing to check are the uniform limits as $x \to \mp\infty$, which are obtained thanks to the following lemmas.
\end{proof}

\begin{lem}
\label{energy}
 $$\iint\limits_{[0,+\infty]\times[-L,0]} f(v) < +\infty, \iint\limits_{[0,+\infty]\times[-L,0]} | \partial_y v |^2 < +\infty$$ and the same is true with $[-\infty,0]$.
\end{lem}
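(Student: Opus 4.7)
The plan is to extract both estimates from integration by parts identities, using the boundary/transmission conditions and the equation for $u$ to eliminate unwanted terms. Throughout, bounds like $0\leq \mu u, v\leq 1$ and $u,u'\in L^\infty$ (via elliptic regularity on the $u$ equation with bounded right-hand side) will be used without further comment; $\partial_x v\geq 0$ also lets us work on truncated strips $[0,X]\times[-L,0]$ and pass to the limit $X\to+\infty$ by monotone convergence.

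For the first bound, I would simply integrate $c\partial_x v-d\partial_{yy}v=f(v)$ over $[0,X]\times[-L,0]$. Using $\partial_y v(x,-L)=0$ and $d\partial_y v(x,0)=\mu u(x)-v(x,0)$ gives
\begin{equation*}
c\int_{-L}^0\bigl(v(X,y)-v(0,y)\bigr)\,dy-\int_0^X\bigl(\mu u(x)-v(x,0)\bigr)\,dx=\iint_{[0,X]\times[-L,0]} f(v).
\end{equation*}
Integrating the equation $-u''+cu'=v(\cdot,0)-\mu u$ on $[0,X]$ expresses $\int_0^X(\mu u-v(\cdot,0))\,dx$ as $u'(X)-u'(0)-c(u(X)-u(0))$, which is uniformly bounded in $X$. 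Hence the left-hand side is uniformly bounded, and monotone convergence yields $\iint_{[0,\infty)\times[-L,0]}f(v)<+\infty$. The case $[-\infty,0]$ is identical.

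For the $|\partial_y v|^2$ estimate, the idea is an energy identity in which the mixed boundary terms recombine as a perfect square. Multiplying the strip equation by $v$ and integrating by parts over $[0,X]\times[-L,0]$ gives
\begin{equation*}
\frac{c}{2}\int_{-L}^0\!\!\bigl(v^2(X,y)-v^2(0,y)\bigr)dy+d\!\iint|\partial_y v|^2-\int_0^X v(x,0)\bigl(\mu u(x)-v(x,0)\bigr)dx=\iint f(v)v,
\end{equation*}
using the Robin and Neumann conditions. Separately, multiplying the $u$ equation by $\mu u$ and integrating on $[0,X]$ yields
\begin{equation*}
-\mu[uu']_0^X+\mu\!\int_0^X (u')^2+\frac{c\mu}{2}[u^2]_0^X=\mu\!\int_0^X uv(\cdot,0)-\mu^2\!\int_0^X u^2.
\end{equation*}
Adding these two identities the cross term $-2\mu\int uv$ combines with $\int v^2+\mu^2\int u^2$ to give $\int(v-\mu u)^2$, producing
\begin{equation*}
d\!\iint|\partial_y v|^2+\int_0^X(v(x,0)-\mu u(x))^2dx+\mu\!\int_0^X(u'(x))^2dx=\iint f(v)v+\text{boundary terms}.
\end{equation*}

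All the boundary terms on the right are uniformly bounded in $X$ (since $v,u$ are bounded and $u'$ is controlled by elliptic estimates), and $\iint f(v)v\leq \iint f(v)$ is controlled by the first bound. Since all three terms on the left are non-negative, the middle display yields $d\iint_{[0,\infty)\times[-L,0]}|\partial_y v|^2\leq C$, as desired. The same manipulation on $[-X,0]$ gives the result on $(-\infty,0]\times[-L,0]$, and as a bonus we also obtain $v(\cdot,0)-\mu u\in L^2(\mathbb R)$ and $u'\in L^2(\mathbb R)$, which will be useful for proving the uniform limits at $\pm\infty$. The only subtle point is spotting the right multipliers so that the boundary flux $v\partial_y v|_{y=0}=v(\mu u-v)/d$ cancels against the $u$-equation contribution to form $(v-\mu u)^2$; once this algebra is arranged the rest is routine.
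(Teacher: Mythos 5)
Your argument is correct, and it is the same basic strategy as the paper's: integrate the strip equation over a truncated domain for the $f(v)$ bound (using the Robin condition and the $u$-equation to control the flux term, exactly as in the paper), then multiply by $v$ for the $|\partial_y v|^2$ bound. Where you genuinely differ is in how the transmission term $\int_0^X v(x,0)\bigl(\mu u-v(x,0)\bigr)\,dx$ is handled: the paper substitutes $\mu u-v(\cdot,0)=u''-cu'$ from the $u$-equation and then exploits monotonicity, namely $-c\int u'v\le 0$ and $\int u''v=\bigl[u'v\bigr]_0^M-\int u'\,\partial_x v(\cdot,0)\le u'(M)$ since $u',\partial_x v\ge 0$, concluding by the boundedness of $u$; you instead add the identity obtained by multiplying the $u$-equation by $\mu u$, so that the cross terms recombine into $\int_0^X\bigl(v(\cdot,0)-\mu u\bigr)^2+\mu\int_0^X (u')^2\ge 0$. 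Your completion of the square avoids any use of the monotonicity $u',\partial_x v\ge 0$ (monotone convergence is not even needed, since the integrands are non-negative and the truncated integrals are uniformly bounded), and it yields the extra information $u'\in L^2(\mathbb R)$ and $v(\cdot,0)-\mu u\in L^2(\mathbb R)$, which is indeed useful for the limits at $\pm\infty$ (the paper gets the constancy of the limit profile from the $\partial_y v$ bound instead). The price is that your boundary term $\mu\bigl[uu'\bigr]_0^X$ requires the uniform bound $u'\in L^\infty$; this is legitimate (local elliptic estimates on unit intervals for $-u''+cu'+\mu u=v(\cdot,0)$ with bounded data, as the paper itself uses elsewhere), whereas the paper's version sidesteps it by the contradiction argument ``$u'(M)\to+\infty$ is incompatible with $u$ bounded.'' Both routes are sound; yours is slightly more systematic, the paper's slightly more economical in hypotheses.
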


\begin{proof}
Integrate on $[0,M]\times[-L,0]$ the equation for $v$ in \eqref{parab} to get
$$ \iint\limits_{[0,M]\times[-L,0]} f(v) = c\left( \int_{-L}^0 v(M,y) - \int_{-L}^0 v(0,y) \right) + u'(M) - u'(0) + c\left(u(M) - u(0)\right)$$
Everything in the left-hand side is bounded, apart from $u'(M)$. Thus, for the integral to diverge as $M\to+\infty$, $u'(M)  \to +\infty$, which is impossible since $u$ is bounded.

For the second integral, multiply the equation by $v$ and integrate by parts to get
$$ \iint\limits_{[0,M]\times[-L,0]} d|\partial_y v|^2 = \iint\limits_{[0,M]\times[-L,0]} f(v)v - c\int_{-L}^0 \frac{1}{2}(v(M,y)^2 - v(0,y)^2) + \int_0^M (u''- cu')v$$
The first two integrals in the right-hand side are bounded. For the last one, we see that $$c\int_0^M u'v \leq c(u(M)-u(0)) \leq \frac{c(1-\theta')}{\mu}$$ so that for the integral to diverge as $M\to+\infty$, $\int_0^M u''v \to +\infty$. But 
$$\int_0^M u'' v = u'(M)v(M) - u'(0)v(0) - \int_0^M u'v' \leq u'(M)v(M) \leq u'(M)$$ so that this is again impossible. The case of $[-\infty,0]$ is similar.

\end{proof}

\begin{prop}
\label{rightlimits}
 $\mu u(x), v(x,y) \to 1$ uniformly in $y$ as $x\to +\infty$ and to some constant $v_- \leq \theta$ as $x\to -\infty$.
\end{prop}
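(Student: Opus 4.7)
The plan is to combine the strict monotonicity $u', \partial_x v > 0$ with the mixed elliptic-parabolic regularity underlying Lemma~\ref{regulparab} to identify the pointwise limits. Because both $u$ and $v(\cdot,y)$ are monotone and bounded on $\mathbb R$, the pointwise limits $u_\pm := \lim_{x\to\pm\infty} u(x)$ and $v_\pm(y) := \lim_{x\to\pm\infty} v(x,y)$ exist. For any sequence $x_n \to \pm\infty$ I would then study the translates $(u_n(x), v_n(x,y)) := (u(x+x_n), v(x+x_n,y))$: they satisfy the same system \eqref{parab} with the same velocity $c$ and uniformly bounded coefficients and right-hand sides, so the coupled Schauder-type estimates used in Lemma~\ref{regulparab} give precompactness in $\mathcal C^{1+\alpha/2,2+\alpha}_{loc}$ up to the interface $y=0$. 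A diagonal extraction produces a limit $(u_\infty,v_\infty)$ solving \eqref{parab}, and since the pointwise limits along every sequence coincide with $u_\pm$, $v_\pm$ and since $u', \partial_x v \geq 0$, the limits satisfy $u_\infty \equiv u_\pm$ and $v_\infty(x,y) \equiv v_\pm(y)$, independent of $x$.

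Plugging these constant-in-$x$ profiles into \eqref{parab} reduces the system to $v_\pm(0) = \mu u_\pm$ from the line equation, together with the ODE $-d v_\pm''(y) = f(v_\pm(y))$ on $(-L,0)$ with Neumann conditions $v_\pm'(-L) = 0$ and $d v_\pm'(0) = \mu u_\pm - v_\pm(0) = 0$. Integrating the ODE on $(-L,0)$ gives $\int_{-L}^0 f(v_\pm)\,dy = 0$; since $f \geq 0$ this forces $f(v_\pm) \equiv 0$, so $v_\pm'' \equiv 0$, and the two Neumann conditions make $v_\pm$ a constant, necessarily lying in the zero set $[0,\theta] \cup \{1\}$ of $f$, with $\mu u_\pm = v_\pm$. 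This conclusion is consistent with (and can be corroborated by) Lemma~\ref{energy}, which forces $f(v(x,\cdot))$ to vanish in some averaged sense as $x \to \pm\infty$.

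It remains to select the correct constant on each side. For $x \to +\infty$, monotonicity yields $\mu u_+ > \mu u(0)$, and the normalisation of the approximating sequence is designed precisely so that $\mu u(0)$ is strictly greater than $\theta$; hence $v_+ = \mu u_+ > \theta$, and the only admissible value in $[0,\theta] \cup \{1\}$ is $v_+ = 1$, giving $u_+ = 1/\mu$. For $x \to -\infty$, monotonicity gives $\mu u_- \leq \mu u(0) < 1$, so $v_- = \mu u_- < 1$, which forces $v_- \in [0,\theta]$, i.e.\ $v_- \leq \theta$. Uniformity in $y$ is automatic from the $\mathcal C^0$-convergence on the compact slice $[-L,0]$. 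The one non-routine step is the precompactness of translates all the way up to the interface $y=0$, where the Robin-type coupling with the line equation must be controlled; this is exactly where the coupled elliptic-parabolic Schauder regularity built into Lemma~\ref{regulparab} is used, so no genuinely new obstacle arises beyond what has already been set up.
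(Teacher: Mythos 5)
Your argument is correct, and it identifies the limits by a route that differs from the paper's in its key step. Both proofs start the same way: monotone bounded profiles give pointwise limits, and compactness of the translates $x\mapsto (u(x+x_n),v(x+x_n,\cdot))$ upgrades this to convergence uniform in $y$ (the paper only extracts a $\mathcal C^1$ limit of the translates via standard parabolic estimates and Ascoli). The difference is in how the limit profile is recognised as a constant zero of $f$ with $\mu u_\pm=v_\pm$. The paper does not pass to the limit in the full system: it invokes Lemma \ref{energy}, using $\iint|\partial_y v|^2<\infty$ to force $\partial_y v_\pm\equiv 0$ and $\iint f(v)<\infty$ to force $f(v_\pm)=0$, and then the exchange condition to get $\mu u\to v_\pm$. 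You instead pass to the limit in \eqref{parab} itself, observe that the limit is independent of $x$, and analyse the resulting one-dimensional problem $-dv_\pm''=f(v_\pm)$ with the two homogeneous Neumann conditions (the one at $y=0$ coming from $v_\pm(0)=\mu u_\pm$, itself forced by the constant-in-$x$ line equation); integrating in $y$ and using $f\geq 0$ then gives the same conclusion. Your approach requires stronger compactness — uniform local elliptic-parabolic Schauder bounds up to the Robin interface, so that the translated limits solve the system classically — but these bounds are indeed available and uniform under translation, since they only depend on $0\leq\mu u,v\leq 1$ (they are exactly the estimates spelled out in the proof of the lower bound on $c_M$; attributing them to Lemma \ref{regulparab}, which concerns the limit $D_n\to\infty$ via $H^3_{loc}$ bounds, is a slight misattribution but not a gap). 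In exchange you avoid the energy identities of Lemma \ref{energy} altogether, treat both ends $x\to\pm\infty$ symmetrically, and obtain $\mu u_\pm=v_\pm$ directly from the limiting equations; the selection of $v_+=1$ and $v_-\leq\theta$ from the normalisation $\mu u(0)=\tfrac{1+\theta}{2}$ and monotonicity coincides with the paper's.
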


\begin{proof}
 By bounds and monotonicity, $v(x,y)$ converges point wise to some $v_+(y)$ as $x\to+\infty$. Let us define the functions $v_j(x,y) = v(x+j,y)$ in $[0,1]\times[-L,0]$ for every integer $j$. Standard parabolic estimates and Ascoli's theorem tell us that up to extraction, $v_j \to \delta$ in the $\mathcal C^1$ sense for a $\mathcal C^1$ function $\delta$. By uniqueness of the simple limit, $\beta = v_+ \in \mathcal C^1$. So $v_j$ lies in a compact set of $\mathcal C^1( [0,1] \times [-L,0])$ and has a unique limit point $\beta\in \mathcal C^1$ : then it converges to it in the $\mathcal C^1$ topology. The $y$-uniform limits follow.

 Now using the finiteness of the second integral above, we have that $v_+(y)$ is constant, moreover, $f(v_+) = 0$ thanks to the finiteness of the first integral. By the exchange condition, $\mu u$ converges to $v_+$ as $x\to +\infty$ so necessarily $v_+ \geq \theta'$ : the only possibility is $v_+ = 1$. 

 The exact same arguments apply to $-\infty$, but all of $[0,\theta]$ are admissible constants. Let us finish with the following.
\end{proof}

\begin{prop}
 $$v_- = 0$$
\end{prop}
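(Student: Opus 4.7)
The plan is to argue by contradiction, exploiting the structure of the pre-limit problem \eqref{boxed} --- whose $v$ component vanishes at the left boundary --- together with an exponential tail estimate in the spirit of Proposition \ref{tail} that survives the passage to the limit $M_n \to \infty$. Assume $v_- > 0$; by the previous proposition $v_- \in (0, \theta]$, so in the left tail of $v$ the effective equation is linear (in the borderline case $v_- = \theta$, it differs from the linearisation by super-polynomially small terms since $f$ is flat at $\theta^-$).

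Concretely, for each $n$ I would introduce the ignition front
\[
X^*_n := \sup\bigl\{\, x : \max_{y \in [-L,0]} v_{M_n}(x,y) \leq \theta \,\bigr\}.
\]
On $[-\theta''M_n, X^*_n] \times [-L,0]$ the pair $(u_{M_n}, v_{M_n})$ satisfies the linear system of Section \ref{upboundDinf}, which admits positive exponential solutions $(e^{\lambda x}, e^{\lambda x} h(y))$ with $\lambda > 0$ and $h > 0$, all varying continuously in the bounded parameter $c_{M_n} \in [c_-, \sqrt{\mathrm{Lip}\, f}]$. The sliding comparison of Proposition \ref{tail}, adapted via the mixed elliptic-parabolic maximum principle of Proposition \ref{maxp}, then yields a constant $C$ independent of $n$ such that
\[
v_{M_n}(x,y) \leq C\, e^{\lambda(x - X^*_n)} h(y), \qquad x \in [-\theta''M_n, X^*_n].
\]
The normalisation $\mu u_{M_n}(0) = \theta'$ together with the uniform $L^\infty$ bound on $u_{M_n}'$ from elliptic regularity confines $X^*_n$ to a bounded window: if $X^*_n \to +\infty$ the estimate would force $v \equiv 0$ in the limit, contradicting $v(+\infty,\cdot) = 1$; the opposite scenario $X^*_n \to -\infty$ corresponds to $v_- = \theta$ and is handled by applying the same linearisation around the shifted state $(\theta/\mu,\theta)$. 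Extracting a subsequence $X^*_n \to X^*$, the local convergence $v_{M_n} \to v$ transfers the bound to
\[
v(x,y) \leq C\, e^{\lambda(x - X^*)} h(y), \qquad x \leq X^*,
\]
so $v(x,y) \to 0$ as $x \to -\infty$, contradicting $v_- > 0$.

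The principal obstacle is the uniformity in $n$ of the Proposition \ref{tail}-type comparison, which requires tracking the continuous dependence of $(\lambda, h)$ on $c_{M_n}$ and the uniform positivity of both (ensured because $c_{M_n}$ stays strictly between $c_-$ and $\sqrt{\mathrm{Lip}\, f}$). The borderline case $v_- = \theta$ is the most delicate: the front $X^*_n$ may genuinely escape to $-\infty$, and one must then apply the exponential comparison around the shifted constant state $(\theta/\mu, \theta)$, using that $f$ and all its derivatives vanish at $\theta^-$ so that the linearisation around this state is still a faithful model of the full equation in the left tail.
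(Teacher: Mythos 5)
The part of your argument in which $X^*_n$ stays in a bounded window is sound in spirit (it is a finite-box version of Proposition \ref{tail}, with $\lambda$ and $h$ uniformly controlled because $c_{M_n}$ stays in $[c_-,\sqrt{\text{Lip}f}]$), and it does dispose of the case $v_-\in(0,\theta)$: there the limit profile crosses the level $\theta$ at a finite abscissa, so $X^*_n$ is bounded and your exponential bound passes to the limit. The genuine gap is the borderline case $v_-=\theta$, which is precisely the difficulty singled out in the paper: the sets $\{v_{M_n}=\theta\}$ may drift to $-\infty$, and then your estimate $v_{M_n}\leq C e^{\lambda(x-X^*_n)}h(y)$, valid only on $x\leq X^*_n$, becomes vacuous on compact sets and yields no contradiction. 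Your proposed remedy --- linearising around the shifted state $(\theta/\mu,\theta)$ --- cannot produce one: since $f(\theta)=0$, the pair $(\theta/\mu,\theta)$ is a genuine equilibrium of the system, and no local or linear argument around it forbids a limit profile connecting $(\theta/\mu,\theta)$ at $-\infty$ to $(1/\mu,1)$ at $+\infty$ (for an ignition nonlinearity such a connection is perfectly plausible a priori). The obstruction must come from the pre-limit boxes, where $v_{M_n}$ vanishes on the left side, and it must be quantitative and uniform in $n$; the flatness of $f$ at $\theta^-$ that you invoke is not, by itself, the relevant mechanism.

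The paper's proof avoids the $\theta$-level set altogether. One chooses $\alpha>\theta$ so close to $\theta$ that $\text{Lip}\bigl(f_{|[0,\alpha]}\bigr)\leq c_-^2$, which is possible because $f$ is $\mathcal C^1$ and vanishes on $[0,\theta]$. Unlike the $\theta$-level sets, the $\alpha$-level sets of $\mu u_{M_n}$ and $v_{M_n}$ are confined to a fixed window $[x_-,x_+]$ uniformly in $n$: the limit profile tends to $v_-\leq\theta<\alpha$ as $x\to-\infty$ and to $1>\alpha$ as $x\to+\infty$ (Proposition \ref{rightlimits}), and this transfers to $u_{M_n},v_{M_n}$ by locally uniform convergence and monotonicity in $x$. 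On $x\leq x_-$ the pair $(e^{c_-x},\mu e^{c_-x})$ is then a true supersolution of \eqref{boxed} (using $c_{M_n}>c_-$ and the smallness of $\text{Lip}f$ below level $\alpha$), and the sliding argument of Proposition \ref{cmaxM} gives $\mu u_{M_n}, v_{M_n}\leq e^{c_-(x-x_-)}$ on $x\leq x_-$, uniformly in $n$. Passing to the limit yields exponential decay of $\mu u,v$ at $-\infty$, hence $v_-=0$ directly, with no case distinction; in particular the scenario $v_-=\theta$ is excluded by this uniform bound, not by any linearisation at $(\theta/\mu,\theta)$. To repair your scheme you would have to replace the $\theta$-front $X^*_n$ by an $\alpha$-front with $\alpha>\theta$ and replace the exact exponential solutions of the linear system by a supersolution tolerating the small nonlinearity on $[0,\alpha]$ --- which is exactly the paper's argument.
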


\begin{proof}
 Here we use that $v$ comes from $v_{M_n}$. As in the proof of the upper bound on $c_M$, we do not know what happens to the level lines $\{ v_{M_n} = \theta \}$, which prevents us to use the usual comparison with positive exponential solutions as $v \leq \theta$ : indeed, the sets $\{ v_{M_n} = \theta \}$ could be sent to $-\infty$. We use a sliding method with a less sharp supersolution by looking at a level line $\{ v_{M_n} = \alpha \}$ with $\alpha > \theta$ close to $\theta$ to prove that this is not the case. We give below a picture of the argument before writing it completely.
 
 \begin{figure}[htb]
 \label{f}
  \centering 
  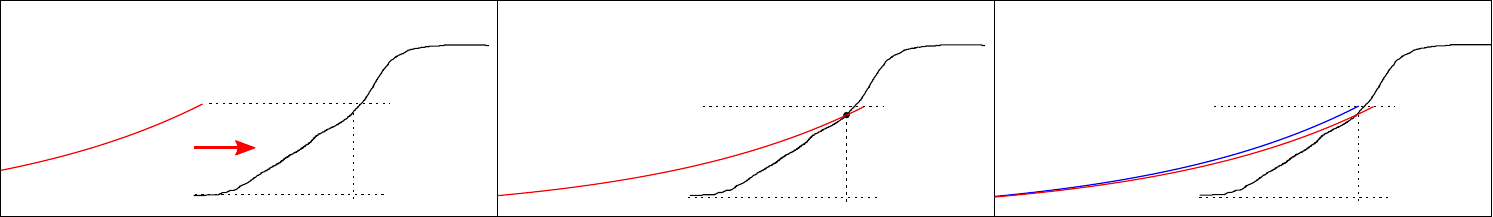 
 \end{figure}
First, observe that thanks to the $y$-uniform convergence of $\mu u,v$ to $v_- \leq \theta$ resp. $1$ as $x\to\mp\infty$ :
$$\exists \ x_- \leq x_+ \in \mathbb R\text{ s.t. } \\ \{\mu u_{M_n} = \alpha\}\subset[x_-,x_+] \text{ and } \{v_{M_n} = \alpha\} \subset [x_-,x_+]\times[-L,0]$$
Indeed, there exists $x_-, x_+$ s.t. for all $x\leq x_-, y\in[-L,0]$, $\mu u, v(x,y) \leq \frac{\theta + \alpha}{2}$ and for all $x\geq x_+, y\in[-L,0]$, $\mu u, v(x,y) \geq \frac{\alpha + 1}{2}$. Thanks to the uniform local convergence of $u_{M_n}, v_{M_n}$ to $u,v$ we can say that there exists $N\in \mathbb N$ s.t. for all $n \geq N$, on $[x_-,x_+]\times[-L,0]$ we have $\frac{2\theta + \alpha}{3} \leq \mu u_{M_n}, v_{M_n} \leq \frac{\alpha + 2}{3}$, and we conclude thanks to the monotonicity of $u_{M_n}, v_{M_n}$. 

We now work on $x \leq x_-$ so that $\mu u_{M_n}, v_{M_n} \leq \alpha$ and use the fact that $\text{Lip} f_{|[0,\alpha]}$ is as small as we want by taking $\alpha$ close enough to $\theta$. The same computations as in Prop. \ref{cmaxM} as well as $c > c_-$ and the monotonicity of $e^{c_-x}$ give that if $\alpha$ is chosen so that
$$\text{Lip} f_{|[0,\alpha]} \leq c_-^2$$
then $(e^{c_- x}, \mu e^{c_- x})$ is a supersolution of \eqref{boxed} as long as $\mu e^{c_- x} \leq \alpha$ : so look at the graph of $\mu e^{c_- x}$ and cut it after it reaches $\alpha$. Now translate this half-graph to the left until it is disconnected with the graph of $u_{M_n}, v_{M_n}$ and bring it back until it touches $\mu u_{M_n}$ or $v_{M_n}$ before $x_-$, which necessarily happens since $\mu u(x_-), v(x_-,y) \leq \alpha$. The arguments given in Prop. \ref{cmaxM} assert here that the contact necessarily happens at $x_-$ with $\mu u_{M_n}$, i.e. the graphs of $\mu u_{M_n}, v_{M_n}$ are below some translation of the cut graph of $\mu e^{c_- x}$ that touches it at $x_-$, where $\mu u_{M_n} \leq \alpha$ so that they are below the graph of $\alpha e^{c_-(x-x_-)}$, i.e.
$$\mu u_{M_n}, v_{M_n} \leq e^{c_-(x-x_-)}\text{ on }x \leq x_-$$
By making $n \to \infty$ we get that $\mu u, v$ decays as $x\to -\infty$ at least as $e^{c_- x}$, which is consistent with the computations of the exponential solutions in Section \ref{upboundDinf}. 
\end{proof}

As a conclusion, I would like to mention that this study motivates the question of convergence towards travelling waves. This work suggests that the travelling wave of \eqref{normal} is globally stable among initial data that are over $\theta$ on a set large enough (whose measure would scale as $\sqrt{D}$). I also conjecture that this convergence happens uniformly in $D$. This will be the purpose of a future work. 

\section*{Acknowledgement}
The research leading to these results has received funding from the European Research Council under the European Union’s Seventh Framework Programme (FP/2007-2013) / ERC Grant Agreement n.321186 - ReaDi -Reaction-Diffusion Equations, Propagation and Modelling.

\bibliographystyle{abbrv}
\bibliography{refs}

\end{document}